%
%
%
%
\documentclass{amsart}

\usepackage[]{amsmath}
\usepackage{amsfonts}
\usepackage{amssymb}
\usepackage{mathrsfs}

\usepackage[all,knot]{xy}
\xyoption{arc}

\newtheorem{theorem}{Theorem}[section]
\newtheorem{lemma}[theorem]{Lemma}
\newtheorem{proposition}[theorem]{Proposition}
\newtheorem{corollary}[theorem]{Corollary}

\theoremstyle{definition}
\newtheorem{definition}[theorem]{Definition}
\newtheorem{example}[theorem]{Example}

\theoremstyle{remark}

\numberwithin{equation}{section}



\begin{document}

\title[A generalized Jordan canonical form theorem]
{A reduction theory for operators in type $\rm{I}_{n}$ von Neumann
algebras}


\author[R. Shi]{Rui Shi}


\address{Rui Shi\\
School of Mathematical Sciences, Dalian University of Technology,
Dalian, 116024, China }

\curraddr{
} \email{ruishi.math@gmail.com}




\subjclass[2000]{Primary 47A15, 47A65; Secondary 47C15}



\keywords{Strongly irreducible operator, similarity invariant,
reduction theory of von Neumann algebras, $K$-theory, finite frame}

\begin{abstract}
In this paper, we study the structures of operators in a type
$\mathrm{I}^{}_{n}$ von Neumann algebra $\mathscr{A}$. Inspired by
the Jordan canonical form theorem, our main result is to describe
the structure of an operator $A$ in $\mathscr{A}$ with the property
that bounded maximal abelian sets of idempotents contained in the
relative commutant $\{A\}^{\prime}\cap\mathscr{A}$ are the same up
to similarity. We classify this class of operators whose relative
commutants possess the stated property by $K$-theory for Banach
algebras. We use techniques of von Neumann's reduction theory in our
proofs.
\end{abstract}

\maketitle

\section{Introduction} 

It is well-known that the Jordan canonical form theorem states that
each operator $A$ in $M^{}_{n}(\mathbb{C})$ is similar to a direct
sum of Jordan matrices and the direct sum is unique up to
similarity. An equivalent statement is that any two (bounded)
maximal abelian sets of idempotents $\mathscr{P}$ and $\mathscr{Q}$
in $\{A\}^{\prime}\cap M^{}_{n}(\mathbb{C})$ are similar to each
other in $\{A\}^{\prime}\cap M^{}_{n}(\mathbb{C})$.

There are two natural ways when we consider to generalize the Jordan
canonical form theorem. One way is to consider the generalization in
the type $\mathrm{I}^{}_{\infty}$ factor instead of the type
$\mathrm{I}^{}_{n}$ factor $M^{}_{n}(\mathbb{C})$. We started our
study in \cite{Shi_1} and carried on in \cite{Shi_2, Shi_3}. In this
`infinite' case, we found that there exists a normal operator $N$
such that there are two bounded maximal abelian sets of idempotents
$\mathscr{P}$ and $\mathscr{Q}$ in $\{N\}^{\prime}$ not similar to
each other in $\{N\}^{\prime}$. The multiplicity function for $N$
plays an important role, and it is required to be bounded if we want
bounded maximal abelian sets of idempotents in $\{N\}^{\prime}$ to
be the same up to similarity. Our results in \cite{Shi_2, Shi_3} are
proved based on the `bounded multiplicity' condition. On the other
hand, another way is to consider the generalization in type
$\mbox{I}^{}_{n}$ von Neumann algebras. It is also a natural
question to ask whether for every operator $A$ in a type
$\mbox{I}^{}_{n}$ von Neumann algebra $\mathscr{A}$, the property of
$\{A\}^{\prime}\cap\mathscr{A}$ holds that bounded maximal abelian
sets of idempotents in the relative commutant
$\{A\}^{\prime}\cap\mathscr{A}$ are the same up to similarity. If
the answer is negative, then we want to characterize the class of
operators in $\mathscr{A}$ whose relative commutants possess the
property. In this paper, we investigate the preceding questions and
the structures of operators in the class.

Throughout this article, we only discuss Hilbert spaces which are
{\textit{complex and separable}}. Denote by
$\mathscr{L}(\mathscr{H})$ the set of bounded linear operators on a
Hilbert space $\mathscr{H}$. Unless there is a danger of confusion,
we will assume from now on that $\mu$ is (the completion of) a
finite positive regular Borel measure supported on a compact subset
$\Lambda$ of $\mathbb {C}$. For the sake of simplicity, we consider
elements in $L^{\infty}(\mu)$ as multiplication operators on
$L^{2}(\mu)$ and matrices in $M^{}_{n}(L^{\infty}(\mu))$ as bounded
linear operators on $(L^{2}(\mu))^{(n)}$. In this sense every
operator $A$ in $M^{}_{n}(L^{\infty}(\mu))$ is in the form
$$A=\begin{pmatrix}
{f^{}_{11}}&\cdots&{f^{}_{1n}}\\
\vdots&\ddots&\vdots\\
{f^{}_{n1}}&\cdots&{f^{}_{nn}}\\
\end{pmatrix}^{}_{n\times n}
\begin{matrix}
L^{2}(\mu)\\
\vdots\\
L^{2}(\mu)\\
\end{matrix}, \eqno{(1.1)}$$
where the multiplication operator $M^{}_{f^{}_{ij}}$ is abbreviated
as $f^{}_{ij}$ in $L^{\infty}(\mu)$ and $i,j=1,\ldots,n$. An
\textit{idempotent} $P$ on $\mathscr{H}$ is an operator in
$\mathscr{L}(\mathscr{H})$ such that $P^{2}=P$. A
{\textit{projection}} $Q$ in $\mathscr{L}(\mathscr{H})$ is an
idempotent such that $Q=Q^{*}$. For an operator $A$ in
$M^{}_{n}(L^{\infty}(\mu))$, the relative commutant of $A$ with
respect to $M^{}_{n}(L^{\infty}(\mu))$ is denoted by
$\{A\}^{\prime}\cap M^{}_{n}(L^{\infty}(\mu))=\{B\in
M^{}_{n}(L^{\infty}(\mu)):AB=BA\}$.

For an operator $A$ in a type $\mbox{I}^{}_{n}$ von Neumann algebra
$M^{}_{n}(L^{\infty}(\mu))$, we need to introduce the following
definition.

\begin{definition}
Let $A$ be an operator in $M^{}_{n}(L^{\infty}(\mu))$. We say that
\textit{the strongly irreducible decomposition of $A$ is unique up
to similarity with respect to the relative commutant
$\{A\}^{\prime}\cap M^{}_{n}(L^{\infty}(\mu))$} if for every two
bounded maximal abelian sets of idempotents $\mathscr{P}$ and
$\mathscr{Q}$ in $\{A\}^{\prime}\cap M^{}_{n}(L^{\infty}(\mu))$,
there exists an invertible element $X$ in $\{A\}^{\prime}\cap
M^{}_{n}(L^{\infty}(\mu))$ such that
$X\mathscr{P}X^{-1}=\mathscr{Q}$. We abbreviate such a relative
commutant $\{A\}^{\prime}\cap M^{}_{n}(L^{\infty}(\mu))$ as a
relative commutant with Property `UDSR'.
\end{definition}

By Definition $1.1$, for every matrix $A$ in $M^{}_{n}(\mathbb{C})$,
the strongly irreducible decomposition of $A$ is unique up to
similarity with respect to $\{A\}^{\prime}\cap
M^{}_{n}(\mathbb{C})$. ( Our reason to focus on the relative
commutant $\{A\}^{\prime}\cap M^{}_{n}(\mathbb{C})$ is that
$M^{}_{n}(\mathbb{C})$ can be embedded into another $C^{*}$ algebra
$\mathfrak{A}$, and in this sense $\{A\}^{\prime}\cap\mathfrak{A}$
is not always equal to $\{A\}^{\prime}\cap M^{}_{n}(\mathbb{C})$.)

In the present paper, to generalize the Jordan canonical form
theorem for operators in $M^{}_{n}(L^{\infty}(\mu))$, the first
question we need to deal with is whether the relative commutant
$\{A\}^{\prime}\cap M^{}_{n}(L^{\infty}(\mu))$ contains a bounded
maximal abelian set of idempotents for every operator $A$ in
$M^{}_{n}(L^{\infty}(\mu))$. We find the answer to this question is
negative in Example $2.11$. Thus we characterize several necessary
and sufficient conditions for the relative commutant
$\{A\}^{\prime}\cap M^{}_{n}(L^{\infty}(\mu))$ containing a bounded
maximal abelian set of idempotents, and these conditions can also be
used to distinguish for which kinds of operators in
$M^{}_{n}(L^{\infty}(\mu))$, the strongly irreducible decompositions
of these operators are unique up to similarity with respect to the
relative commutants. One of our main theorems is as follows:

\begin{theorem}
Let $A$ be an operator in $M^{}_{n}(L^{\infty}(\mu))$. Then the
following statements are equivalent:
\begin{enumerate}
\item the relative commutant $\{A\}^{\prime}\cap
M^{}_{n}(L^{\infty}(\mu))$ contains a finite frame
$\{P^{}_{k}\}^{m}_{k=1}$;
\item the relative commutant $\{A\}^{\prime}\cap
M^{}_{n}(L^{\infty}(\mu))$ contains a bounded maximal abelian set of
idempotents;
\item the local structures of $A$ are stated as in Proposition $2.8$;
\item there exists an invertible element $X$
in $M^{}_{n^{}_{}}(L^{\infty}(\mu))$ and a unitary operator $U$ such
that $UXAX^{-1}U^{*}_{}$ is a direct integral of strongly
irreducible operators with respect to a diagonal algebra
$\mathscr{D}$ and $U^{*}_{}\mathscr{D}U\subseteq
M^{}_{n^{}_{}}(L^{\infty}(\mu))$.
\end{enumerate}
If one of the above conditions holds for $A$, then for every two
bounded maximal abelian sets of idempotents $\mathscr{P}$ and
$\mathscr{Q}$ in $\{A\}^{\prime}\cap M^{}_{n}(L^{\infty}(\mu))$,
there exists an invertible element $X$ in $\{A\}^{\prime}\cap
M^{}_{n}(L^{\infty}(\mu))$ such that
$X\mathscr{P}X^{-1}=\mathscr{Q}$.
\end{theorem}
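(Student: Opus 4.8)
The plan is to prove the theorem by establishing the cycle of implications $(1)\Rightarrow(2)\Rightarrow(3)\Rightarrow(4)\Rightarrow(1)$, and then deriving the final uniqueness conclusion from condition $(4)$. For the implications among the four conditions, I would lean heavily on Proposition $2.8$, which (by its role in the statement) records the precise ``local structure'' of $A$ — presumably a pointwise-$\mu$-measurable decomposition of $\mathbb{C}^{n}$ into spectral subspaces on which $A(\lambda)$ acts as a direct sum of strongly irreducible blocks with measurably varying sizes. The step $(1)\Rightarrow(2)$ should be the observation that a finite frame $\{P_k\}_{k=1}^m$ of idempotents in the relative commutant can be enlarged (by taking the abelian algebra it generates, splitting into minimal pieces, and saturating) to a bounded maximal abelian set of idempotents, using that everything lives inside the finite von Neumann algebra $M_n(L^\infty(\mu))$ so norms of the relevant idempotents stay controlled. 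The step $(2)\Rightarrow(3)$ is where one reads off the measurable block structure from the existence of a bounded maximal abelian family: integrate the family over $\Lambda$ to get, $\mu$-a.e.\ $\lambda$, a maximal abelian set of idempotents in $\{A(\lambda)\}'\cap M_n(\mathbb{C})$, and invoke the classical Jordan form fiberwise, with the boundedness giving uniform control so the blocks assemble into honest $L^\infty$ matrices. The step $(3)\Rightarrow(4)$ is the direct-integral assembly: the local structure lets us conjugate $A$, by an invertible $X\in M_n(L^\infty(\mu))$ and a unitary $U$ permuting/rearranging coordinates, into a genuine direct integral of strongly irreducible operators over a diagonalizing abelian subalgebra $\mathscr{D}$ with $U^*\mathscr{D}U\subseteq M_n(L^\infty(\mu))$. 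Finally $(4)\Rightarrow(1)$ is easy: a direct integral of strongly irreducible operators has an obvious finite frame of diagonal idempotents in its commutant, which pulls back through $X$ and $U$.

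For the concluding uniqueness statement, I would work from condition $(4)$: after replacing $A$ by $UXAX^{-1}U^*$ (which only conjugates the whole picture by a fixed invertible, transporting $\mathscr{P},\mathscr{Q}$ and any intertwiner accordingly), we may assume $A=\int^{\oplus} A(\lambda)\,d\mu(\lambda)$ is a direct integral of strongly irreducible operators relative to the diagonal algebra $\mathscr{D}$. The relative commutant $\{A\}'\cap M_n(L^\infty(\mu))$ then has a decomposable description: its elements are (essentially bounded) measurable fields $\lambda\mapsto B(\lambda)$ with $B(\lambda)\in\{A(\lambda)\}'\cap M_n(\mathbb{C})$. Given two bounded maximal abelian sets of idempotents $\mathscr{P}$ and $\mathscr{Q}$, disintegrate them to obtain, $\mu$-a.e., maximal abelian sets of idempotents $\mathscr{P}(\lambda)$ and $\mathscr{Q}(\lambda)$ in $\{A(\lambda)\}'\cap M_n(\mathbb{C})$. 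By the classical Jordan canonical form theorem (the equivalent reformulation quoted in the introduction), for $\mu$-a.e.\ $\lambda$ there is an invertible $X(\lambda)\in\{A(\lambda)\}'\cap M_n(\mathbb{C})$ with $X(\lambda)\mathscr{P}(\lambda)X(\lambda)^{-1}=\mathscr{Q}(\lambda)$.

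The main obstacle — and the step I would spend the most care on — is measurability and uniform boundedness of the fiberwise conjugators $\lambda\mapsto X(\lambda)$ together with $\lambda\mapsto X(\lambda)^{-1}$, so that $X=\int^{\oplus}X(\lambda)\,d\mu(\lambda)$ is a genuine \emph{invertible} element of $M_n(L^\infty(\mu))$ lying in $\{A\}'\cap M_n(L^\infty(\mu))$. The existence of $X(\lambda)$ fiberwise is pure linear algebra, but $X(\lambda)$ is far from unique, so one must make a measurable selection; I would do this by a von Neumann selection / exhaustion argument — enumerate a countable dense family of candidate idempotents generating $\mathscr{P}$ and $\mathscr{Q}$, express the conjugacy condition as the measurable set on which a solvable finite linear system has a solution of controlled norm, and select a solution measurably on each piece of a countable measurable partition of $\Lambda$, patching the pieces together. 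The boundedness of $\mathscr{P}$ and $\mathscr{Q}$ (part of the hypothesis ``bounded maximal abelian set of idempotents'') is exactly what lets one choose the selection with $\|X(\lambda)\|$ and $\|X(\lambda)^{-1}\|$ uniformly bounded in $\lambda$: the number of distinct similarity types of the pair $(\mathscr{P}(\lambda),\mathscr{Q}(\lambda))$ that can occur with a given norm bound is controlled because everything happens in $M_n(\mathbb{C})$ with $n$ fixed, and on each such ``type stratum'' a uniform bound on a conjugator is available by compactness. Once $X$ is built, $X\mathscr{P}X^{-1}=\mathscr{Q}$ follows from the fiberwise equality together with the fact that a decomposable idempotent is determined by its fibers, completing the proof.
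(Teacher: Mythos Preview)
Your cycle of implications and your handling of $(1)\Leftrightarrow(2)$ and $(4)\Rightarrow(1)$ are in line with the paper (Proposition~2.5 and Proposition~4.3). The divergence is in $(2)\Rightarrow(3)$ and in the final uniqueness claim, where your approach is fiberwise while the paper's is global and algebraic.

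There is a genuine gap in your fiberwise argument: you disintegrate a bounded maximal abelian set $\mathscr{P}$ into sets $\mathscr{P}(\lambda)\subseteq\{A(\lambda)\}'\cap M_n(\mathbb{C})$ and assert that $\mathscr{P}(\lambda)$ is \emph{maximal} abelian there for $\mu$-a.e.\ $\lambda$, but you do not prove this. Global maximality only rules out idempotents that assemble into a \emph{bounded} field in $M_n(L^\infty(\mu))$; it does not directly rule out, on a set of positive measure, fiberwise idempotents commuting with $\mathscr{P}(\lambda)$ whose norms blow up as $\lambda$ varies. In fact, justifying fiberwise maximality here amounts to already knowing the block structure of Proposition~2.8---one must first extract a finite frame from $\mathscr{P}$ (Proposition~2.5), diagonalise it, and recognise the fibers as strongly irreducible blocks---so by the time this step is secured you have essentially carried out the paper's programme. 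Your ``compactness on type strata'' argument for uniform bounds on $X(\lambda)^{\pm1}$ is likewise incomplete: the conjugator must lie in $\{A(\lambda)\}'$, and its norm depends on $A(\lambda)$ (for instance on the inverses of the $1$-diagonal entries of the upper-triangular blocks), not merely on the similarity class of the idempotent pair.

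The paper sidesteps both issues. It never disintegrates $\mathscr{P}$; instead, from any bounded maximal abelian set it first extracts a finite frame (Proposition~2.5), diagonalises that frame by the explicit $L^\infty$ algorithm of Lemmas~2.1--2.3 and~2.6, and reads off the block-upper-triangular structure algebraically (Lemmas~2.9, 2.10). For the uniqueness conclusion (Theorem~3.9), it uses the resulting explicit description of the relative commutant (Lemmas~3.2, 3.3) to show, by a direct construction (Lemmas~3.4 and~3.6, with the nilpotent-perturbation trick $(2P-I-R)$ of Example~3.1), that every idempotent in $\{A\}'\cap M_n(L^\infty(\mu))$ is similar, \emph{within the relative commutant}, to a diagonal one; the norm control is automatic because every step is an explicit rational formula in $L^\infty$ data with denominators bounded away from zero. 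Both $\mathscr{P}$ and $\mathscr{Q}$ are thus conjugate to the fixed set of diagonal projections, and global maximality enters only at the very end, to upgrade an inclusion to an equality---no fiberwise maximality and no measurable selection are ever invoked.
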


Theorem $1.2$ is mainly a combination of Theorem $3.9$, Proposition
$2.5$, Proposition $2.8$ and Proposition $4.3$. For a building block
$J^{}_{n^{}_{k}}$ in $M^{}_{n^{}_{k}}(L^{\infty}(\mu))$ stated in
the `Local Structure Proposition' (Proposition $2.8$), we compute
the $K^{}_{0}$ group of $\{J^{}_{n^{}_{k}}\}^{\prime}\cap
M^{}_{n^{}_{k}}(L^{\infty}(\mu))$ in Lemma $4.4$. On the other hand,
by Proposition $2.8$ we apply the $K^{}_{0}$ groups of the relative
commutants to classify operators $A$ in $M^{}_{n}(L^{\infty}(\mu))$
with the property that each relative commutant $\{A\}^{\prime}\cap
M^{}_{n}(L^{\infty}(\mu))$ contains a finite frame.

The present paper is organized as follows. In section $2$, we prove
several preliminary lemmas and introduce the concept for a frame to
be finite in a relative commutant. In Proposition $2.5$, we prove
that the statements $(1)$ and $(2)$ in Theorem $1.2$ are equivalent.
In Proposition $2.8$, we characterize the local structures of $A$
with respect to the center of $M^{}_{n}(L^{\infty}(\mu))$, for an
operator $A$ in $M^{}_{n}(L^{\infty}(\mu))$ with $\{A\}^{\prime}\cap
M^{}_{n}(L^{\infty}(\mu))$ containing a finite frame. Then we
present an example in which an operator in
$M^{}_{2}(L^{\infty}(\mu))$ is constructed such that the relative
commutant contains no finite frames. By this example and the proof
of Proposition $2.8$, we can construct many examples of this type.
Section $3$ is mainly devoted to the proof Theorem $3.9$. As a
corollary, we prove that every normal operator in
$M^{}_{n}(L^{\infty}(\mu))$ possesses the properties mentioned in
Theorem $1.2$. In section $4$, with the aid of Section $2$ and
Section $3$, we show the connection between the direct integrals of
strongly irreducible operators and the operators in
$M^{}_{n}(L^{\infty}(\mu))$ with the property mentioned in $(2)$ of
Theorem $1.2$. Then we discuss the `local' $K$-theory of the
relative commutant of $A$ in $M^{}_{n}(L^{\infty}(\mu))$ with
respect to the center of $M^{}_{n}(L^{\infty}(\mu))$ and Proposition
$2.8$. By virtue of the discussion, we prove that the `local'
$K$-theory of the relative commutant of $A$ in
$M^{}_{n}(L^{\infty}(\mu))$ can be used as a complete similarity
invariant to classify operators with the properties mentioned in
Theorem $1.2$.

\section{The local structures of $A$ in
$M^{}_{n}(L^{\infty}(\mu))$ for $\{A\}^{\prime}\cap
M^{}_{n}(L^{\infty}(\mu))$ containing a finite frame}

The following two lemmas are devoted to proving Lemma $2.3$. In the
proof of Lemma $2.3$, we introduce an algorithm to find an
invertible operator $X$ in $M^{}_{n}(L^{\infty}(\mu))$ such that
$XPX^{-1}$ is diagonal, for a given idempotent $P$ in
$M^{}_{n}(L^{\infty}(\mu))$. This algorithm is also used in the
proof of the main theorem.

\begin{lemma}
Let $P$ be a non-zero idempotent in $M^{}_{n}(\mathbb {C})$ of the
form
$$P=\begin{pmatrix}
\alpha^{}_{11}&\alpha^{}_{12}&\cdots&\alpha^{}_{1n}\\
\alpha^{}_{21}&\alpha^{}_{22}&\cdots&\alpha^{}_{2n}\\
\vdots&\vdots&\ddots&\vdots\\
\alpha^{}_{n1}&\alpha^{}_{n2}&\cdots&\alpha^{}_{nn}\\
\end{pmatrix}^{}_{n\times n}, \eqno{(2.1)}$$
where $\mathrm{rank} (P)=r>0$ and $\alpha^{}_{11}\neq 0$. Let $X$ be
a lower triangular matrix of the form
$$X=\begin{pmatrix}
1&0&\cdots&0\\
-\dfrac{\alpha^{}_{21}}{\alpha^{}_{11}}&1&\cdots&0\\
\vdots&\vdots&\ddots&\vdots\\
-\dfrac{\alpha^{}_{n1}}{\alpha^{}_{11}}&0&\cdots&1\\
\end{pmatrix}^{}_{n\times n}. \eqno{(2.2)}$$
Then $X$ is invertible in $M^{}_{n}(\mathbb {C})$ such that the
$(1,1)$ entry of $XPX^{-1}$ is $1$ and the $(i,1)$ entry of
$XPX^{-1}$ is $0$ for $i=2,\ldots,n$.
\end{lemma}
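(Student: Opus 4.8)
The plan is to verify directly that the claimed invertible conjugation does what is asserted, treating $X$ as the product of elementary row operations that clear the first column of $P$ below the diagonal entry. First I would note that $X$ in $(2.2)$ is lower triangular with all diagonal entries equal to $1$, hence $\det X = 1$ and $X$ is invertible in $M_n(\mathbb{C})$; moreover its inverse is obtained simply by negating the off-diagonal entries, i.e. $X^{-1}$ is the lower triangular matrix with $(i,1)$ entry $\alpha_{i1}/\alpha_{11}$ for $i=2,\dots,n$ and $1$'s on the diagonal. This is routine to check since $X = I - N$ where $N$ has a single nonzero column (the first) with zero in the $(1,1)$ slot, so $N^2 = 0$ and $X^{-1} = I + N$.

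Next I would compute $XPX^{-1}$ by first computing $XP$ and then right-multiplying by $X^{-1}$, keeping track only of the first column. Left multiplication by $X$ replaces row $i$ (for $i\ge 2$) of $P$ by (row $i$) $-\,(\alpha_{i1}/\alpha_{11})\cdot$(row $1$), so the $(i,1)$ entry of $XP$ becomes $\alpha_{i1} - (\alpha_{i1}/\alpha_{11})\alpha_{11} = 0$, while the $(1,1)$ entry of $XP$ is still $\alpha_{11}$. Now right multiplication by $X^{-1} = I + N$ adds to column $1$ a combination of columns $2,\dots,n$; crucially this does not change any entry of column $1$ at all, because $N$ has entries only in its first column, so $(MX^{-1})$ and $M$ agree in every column except possibly column $1$, and in column $1$ we have $(MX^{-1})_{\bullet 1} = M_{\bullet 1} + \sum_{j\ge 2} M_{\bullet j}\cdot 0$. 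Wait — more carefully: $X^{-1}$ has nonzero off-diagonal entries only in its first \emph{column}, namely entries $(X^{-1})_{i1}$ for $i\ge 2$; so $(MX^{-1})_{k1} = \sum_i M_{ki}(X^{-1})_{i1} = M_{k1} + \sum_{i\ge 2} M_{ki}(X^{-1})_{i1}$, which \emph{does} mix in later columns of $M$. So the first column of $XPX^{-1}$ is $(XP)_{\bullet 1} + \sum_{i\ge 2}(XP)_{\bullet i}(X^{-1})_{i1}$; I must use instead that the first \emph{row} of $X^{-1}$ is $(1,0,\dots,0)$, so the $(1,1)$ entry is governed by row $1$ of $XP$, and similarly argue the $(i,1)$ entries vanish. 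Let me restructure: compute $PX^{-1}$ first — right-multiplying by $X^{-1}=I+N$ does a column operation adding multiples of column $1$ to columns $2,\dots,n$ and leaves column $1$ unchanged, so $(PX^{-1})_{\bullet 1} = P_{\bullet 1}$, in particular $(PX^{-1})_{11} = \alpha_{11}$ and $(PX^{-1})_{i1} = \alpha_{i1}$. Then left-multiply by $X = I - N$: this is the row operation subtracting $(\alpha_{i1}/\alpha_{11})\times$(row $1$) from row $i$ for $i\ge 2$; since row $1$ of $PX^{-1}$ still has first entry $\alpha_{11}$, the resulting $(i,1)$ entry is $\alpha_{i1} - (\alpha_{i1}/\alpha_{11})\alpha_{11} = 0$, and the $(1,1)$ entry is untouched at $\alpha_{11}$.

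It remains to upgrade the $(1,1)$ entry from $\alpha_{11}$ to $1$. Here I would invoke the idempotent hypothesis: since $P^2 = P$ and conjugation preserves idempotency, $Q := XPX^{-1}$ satisfies $Q^2 = Q$; looking at the $(1,1)$ entry of $Q^2 = Q$ and using that the first column of $Q$ below the $(1,1)$ slot is zero, one gets $q_{11}^2 = q_{11}$, so $q_{11} \in \{0,1\}$. To rule out $q_{11}=0$ one observes $q_{11} = \alpha_{11} \neq 0$ by hypothesis, so $q_{11}=1$, as claimed. The only mild subtlety — the step I'd flag as needing the most care — is bookkeeping the order of the two one-sided operations so that the column-clearing is genuinely preserved; doing $PX^{-1}$ before $XP$ (as above) makes this transparent because the column operation leaves column $1$ fixed and the subsequent row operation is exactly designed to annihilate the subdiagonal of that fixed column, with the pivot $\alpha_{11}$ never disturbed.
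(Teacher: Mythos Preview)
Your restructured argument contains a genuine error in the description of the column operation. You write that ``right-multiplying by $X^{-1}=I+N$ does a column operation adding multiples of column~$1$ to columns $2,\dots,n$ and leaves column~$1$ unchanged.'' This is backwards. Since $N$ has its nonzero entries in the first \emph{column} (positions $(i,1)$ for $i\ge 2$), the product $PN$ is nonzero only in column~$1$: indeed $(PN)_{k1}=\sum_{i\ge 2}\alpha_{ki}\,\alpha_{i1}/\alpha_{11}$ and $(PN)_{kj}=0$ for $j\ge 2$. So $PX^{-1}=P+PN$ leaves columns $2,\dots,n$ untouched and \emph{changes} column~$1$. Consequently your claim $(PX^{-1})_{\bullet 1}=P_{\bullet 1}$ is false, and the subsequent row-operation step does not yield what you assert. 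A symptom of the error is the internal inconsistency at the end: you conclude both $q_{11}=\alpha_{11}$ and $q_{11}\in\{0,1\}$, which would force $\alpha_{11}=1$ --- but the hypothesis only says $\alpha_{11}\neq 0$ (e.g.\ $P=\frac12\left(\begin{smallmatrix}1&1\\1&1\end{smallmatrix}\right)$ has $\alpha_{11}=\tfrac12$).

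The quickest repair uses the idempotent relation at the outset rather than as an afterthought. Observe that the first column of $X^{-1}$ is exactly $\alpha_{11}^{-1}$ times the first column of $P$, i.e.\ $X^{-1}e_1=\alpha_{11}^{-1}Pe_1$. Hence
\[
XPX^{-1}e_1=\alpha_{11}^{-1}\,XP^2e_1=\alpha_{11}^{-1}\,XPe_1,
\]
and $XPe_1$ is the first column of $XP$, which your (correct) row-operation computation shows equals $\alpha_{11}e_1$. Therefore $XPX^{-1}e_1=e_1$, giving simultaneously the $(1,1)$ entry equal to $1$ and the $(i,1)$ entries equal to $0$ for $i\ge 2$. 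The paper's own proof takes a longer route (passing through an auxiliary matrix $Y$ and a rank/trace argument to identify a block $P_{11}=I_r$), but the key point you were missing is that $P^2=P$ must be used to control column~$1$ after the right-multiplication, not merely to pin down $q_{11}$ at the end.
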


\begin{proof}
If the rank of $P$ is $n$, then we obtain $P=I^{}_{n}$, where we
denote by $I^{}_{n}$ the unit of $M^{}_{n}(\mathbb {C})$. For
$0<\mathrm{rank} (P)<n$, by a computation, we obtain that $X^{-1}$
is of the form
$$X^{-1}_{}=\begin{pmatrix}
1&0&\cdots&0\\
\dfrac{\alpha^{}_{21}}{\alpha^{}_{11}}&1&\cdots&0\\
\vdots&\vdots&\ddots&\vdots\\
\dfrac{\alpha^{}_{n1}}{\alpha^{}_{11}}&0&\cdots&1\\
\end{pmatrix}^{}_{n\times n}. \eqno{(2.3)}$$

Notice that $\mathrm{rank}(P)=\mathrm{rank} (XP)$ and the $(i,1)$
entry of $XP$ is $0$ for $i=2,\ldots,n$. Without loss of generality,
we assume that $XP$ is of the form
$$XP=\begin{pmatrix}
\beta^{}_{11}&\beta^{}_{12}&\cdots&\beta^{}_{1n}\\
0&\beta^{}_{22}&\cdots&\beta^{}_{2n}\\
\vdots&\vdots&\ddots&\vdots\\
0&\beta^{}_{n2}&\cdots&\beta^{}_{nn}\\
\end{pmatrix}^{}_{n\times n}, \eqno{(2.4)}$$
and the first $r$ rows $\{\beta^{}_{i}\}^{r}_{i=1}$ of $XP$ are
linear independent, where we denote by $\beta^{}_{i}$ the $i$-th row
of $XP$. Note that $\alpha^{}_{1i}=\beta^{}_{1i}$ for
$i=1,\ldots,n$.

We assert that every element of $\{\beta^{}_{i}\}^{n}_{i=r+1}$ is a
linear combination of $\beta^{}_{2},\ldots,\beta^{}_{r}$.

By the foregoing assumption, every element of
$\{\beta^{}_{i}\}^{n}_{i=r+1}$ is a linear combination of
$\beta^{}_{1},\ldots,\beta^{}_{r}$. Assume that
$$\beta^{}_{r+s}=\lambda^{}_{1}\beta^{}_{1}+\cdots+\lambda^{}_{r}\beta^{}_{r}
\eqno{(2.5)}$$ where $\lambda^{}_{i}\in\mathbb {C}$ for
$i=1,\ldots,r$, $s=1,\ldots,n-r$. If $\lambda^{}_{1}\neq 0$, then
the $(r+s,1)$ entry of $XP$ will not be $0$. This contradicts
$(2.4)$. Thus we obtain the assertion.

By the preceding assertion, there exists a lower triangular
invertible matrix $Y$ of the form
$$Y=\begin{pmatrix}
1&0&\cdots&0&0&\cdots&0\\
0&1&\cdots&0&0&\cdots&0\\
\vdots&\vdots&\ddots&\vdots&\vdots&\ddots&\vdots\\
0&0&\cdots&1&0&\cdots&0\\
0&\lambda^{}_{r+1,2}&\cdots&\lambda^{}_{r+1,r}&1&\cdots&0\\
\vdots&\vdots&\ddots&\vdots&\vdots&\ddots&\vdots\\
0&\lambda^{}_{n,2}&\cdots&\lambda^{}_{n,r}&0&\cdots&1\\
\end{pmatrix}^{}_{n\times n} \eqno{(2.6)}$$ such that the $(r+s)$-th
row of $YXP$ is $\mathbf{0}$ for $s=1,\ldots,n-r$.

Note that $YXPX^{-1}Y^{-1}$ is an idempotent of the form
$$YXPX^{-1}Y^{-1}=\begin{pmatrix}
P^{}_{11}&R\\
\mathbf{0}&\mathbf{0}\\
\end{pmatrix}
\begin{array}{l}
\mathbb{C}^{(r)}\\
\mathbb{C}^{(n-r)}\\
\end{array}, \eqno{(2.7)}$$ where $P^{}_{11}$ is in
$M^{}_{r}(\mathbb{C})$. Thus $P^{}_{11}$ is an idempotent. Note that
the standard trace of an idempotent $Q$ (denoted by
$\mathrm{Tr}(Q)$) in $M^{}_{n}(\mathbb {C})$ is equal to the rank of
$Q$. Therefore, we obtain that
$$\begin{array}{rcl}
r=\mathrm{rank}(P)&=&\mathrm{rank}(YXPX^{-1}Y^{-1})\\
&=&\mathrm{Tr}(YXPX^{-1}Y^{-1})\\
&=&\mathrm{Tr}(P^{}_{11})\\
&=&\mathrm{rank}(P^{}_{11}).\\
\end{array} \eqno{(2.8)}$$
Thus $P^{}_{11}=I^{}_{r}$ is the unit of $M^{}_{r}(\mathbb {C})$. By
the construction of $Y$, the $(i,1)$ entries of $XPX^{-1}$,
$YXPX^{-1}$ and $YXPX^{-1}Y^{-1}$ are the same for $i=1,\ldots,r$.
Since $P^{}_{11}=I^{}_{r}$, we obtain that the $(1,1)$ entry of
$XPX^{-1}$ is $1$ and the $(i,1)$ entry of $XPX^{-1}$ is $0$ for
$i=2,\ldots,r$. The equality $(YXP)X^{-1}=Y(XPX^{-1})$ yields that
the $(i,1)$ entry of $YXPX^{-1}$ is $0$ for $i=r+1,\ldots,n$. By the
construction of $Y$ and the fact that the $(i,1)$ entry of
$XPX^{-1}$ is $0$ for $i=2,\ldots,r$, we obtain that the $(i,1)$
entry of $XPX^{-1}$ is $0$ for $i=r+1,\ldots,n$. The proof is
finished.
\end{proof}

It is well-known that in $M^{}_{n}(\mathbb {C})$ every idempotent is
similar to a diagonal projection. The reason that we restate this in
the following lemma is to develop an algorithm which leads to a
solution of a related problem raised in $M^{}_{n}(L^{\infty}(\mu))$.

\begin{lemma}
If $P$ is an idempotent in $M^{}_{n}(\mathbb {C})$, then there
exists an invertible matrix $X$ in $M^{}_{n}(\mathbb {C})$ such that
$XPX^{-1}$ is diagonal in $M^{}_{n}(\mathbb {C})$, where $X$ is a
composition of finitely many invertible matrices as in $(2.2)$ and
row-switching unitary matrices and an invertible block matrix of the
form
$$\begin{pmatrix}
I^{}_{r}&R\\
\mathbf{0}&I^{}_{n-r}\\
\end{pmatrix}
\begin{array}{l}
\mathbb{C}^{(r)}\\
\mathbb{C}^{(n-r)}\\
\end{array}. \eqno{(2.9)}$$
\end{lemma}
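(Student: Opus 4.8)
The plan is to iterate Lemma 2.1 down the main diagonal, clearing out one column at a time and reducing the size of the problem by one at each step. First I would observe that the statement is trivial if $P=\mathbf{0}$ or if $\mathrm{rank}(P)=n$ (so $P=I_n$), so assume $0<\mathrm{rank}(P)=r<n$. The first task is to arrange that the $(1,1)$ entry is nonzero: since $P\neq\mathbf{0}$, some diagonal entry of $P$ is nonzero (indeed, $\mathrm{Tr}(P)=r>0$ forces at least one $\alpha_{ii}\neq 0$), and conjugating by a row-switching unitary matrix (a permutation matrix, which is its own natural ``block of the allowed type'') moves such an entry into the $(1,1)$ slot without changing that it is an idempotent. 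Then I apply Lemma 2.1 to obtain an invertible $X_1$ (of the form $(2.2)$) such that $X_1PX_1^{-1}$ has $(1,1)$ entry equal to $1$ and $(i,1)$ entry equal to $0$ for $i=2,\ldots,n$.

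Next I would use idempotency to also clear the first row above the diagonal block. Writing $Q=X_1PX_1^{-1}$ in block form with respect to $\mathbb{C}^{(1)}\oplus\mathbb{C}^{(n-1)}$, the first column is $(1,0,\ldots,0)^{t}$, so $Q=\left(\begin{smallmatrix}1&v\\ \mathbf{0}&Q'\end{smallmatrix}\right)$ for some row vector $v$ and some $Q'\in M_{n-1}(\mathbb{C})$. From $Q^2=Q$ one reads off $v+vQ'=v$, i.e. $vQ'=\mathbf{0}$, and $Q'^2=Q'$, so $Q'$ is an idempotent. Conjugating $Q$ by the block matrix $\left(\begin{smallmatrix}1&w\\ \mathbf{0}&I_{n-1}\end{smallmatrix}\right)$ — an instance of the allowed form $(2.9)$ with $r=1$ — changes $v$ to $v+wQ'-w$; choosing $w=v$ and using $vQ'=\mathbf{0}$ makes the first row equal to $(1,0,\ldots,0)$. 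Thus after finitely many conjugations of the permitted types, $P$ is conjugate to $1\oplus Q'$ with $Q'\in M_{n-1}(\mathbb{C})$ an idempotent. (The case where the diagonalization of the current block forces picking the pivot from row $i=1$ but value $0$ cannot occur once we have permuted a nonzero diagonal entry into position, and if at some stage the current block is $\mathbf{0}$ we simply stop — it is already diagonal.)

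Now I would run the induction: the hypothesis applied to $Q'\in M_{n-1}(\mathbb{C})$ yields an invertible $Y'\in M_{n-1}(\mathbb{C})$, a composition of matrices of the three listed types, conjugating $Q'$ to a diagonal projection; embedding $Y'$ as $1\oplus Y'$ preserves membership in each of the three classes (a lower-triangular matrix as in $(2.2)$ stays lower-triangular of that shape; a row switch in the last $n-1$ coordinates is still a row-switching unitary on $\mathbb{C}^{(n)}$; and a block matrix $\left(\begin{smallmatrix}I_s&R\\ \mathbf{0}&I_{n-1-s}\end{smallmatrix}\right)$ sits inside one of the form $\left(\begin{smallmatrix}I_{s+1}&R'\\ \mathbf{0}&I_{n-1-s}\end{smallmatrix}\right)$ on $\mathbb{C}^{(n)}$). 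Composing all of these conjugating matrices — the initial permutation, $X_1$, the row-clearing block matrix, and the embedded $1\oplus Y'$ — gives the desired $X$ and exhibits it as a composition of the stated elementary types, since diagonal projections in direct sum remain diagonal projections.

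The main obstacle, and the only point requiring care, is bookkeeping: verifying that conjugating $Q=X_1PX_1^{-1}$ by the row-clearing block matrix does not disturb the zeros already produced in the first column (it does not, since that block matrix is upper triangular and acts trivially on the first column on the left), and checking that each elementary matrix used in the inductive step, when padded by a leading $1$, genuinely lands in one of the three allowed families on $\mathbb{C}^{(n)}$ — in particular that the class of block matrices $(2.9)$ is closed under increasing $r$ in this padded sense. Everything else is the repeated application of Lemma 2.1 together with the identity $Q^2=Q$.
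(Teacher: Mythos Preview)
Your argument is correct and follows the same inductive idea as the paper---iterate Lemma~2.1 down the diagonal, using a row-switch to place a nonzero pivot and then a matrix of type $(2.2)$ to clear the column below it---but it differs in one organizational point that is worth flagging.

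You clear the first \emph{row} immediately at each step with a $(2.9)$-type conjugation, reducing to $1\oplus Q'$ before inducting; the paper instead clears only the columns, relying on idempotency alone to force the emerging upper-left block to be $I_r$ (for instance, once columns $1$ and $2$ have the shape $(1,0,\dots,0)^{t}$ and $(\ast,1,0,\dots,0)^{t}$, the relation $P^{2}=P$ forces the $(1,2)$ entry to vanish automatically). After $r$ such steps the paper arrives at $\left(\begin{smallmatrix} I_r & R\\ 0 & 0\end{smallmatrix}\right)$ and only then applies a \emph{single} matrix of the form $(2.9)$ to kill $R$. Thus the paper's $X$ has exactly one $(2.9)$-type factor, applied last, whereas your $X$ contains several such factors interleaved with the others. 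If the phrase ``and an invertible block matrix of the form $(2.9)$'' in the statement is read literally as one such factor, your construction gives a slightly different factorization than the one asserted; for the downstream application in Lemma~2.3 this makes no real difference, since each elementary factor you use still has controlled norm in the measurable setting.
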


\begin{proof}
If the idempotent $P$ is trivial in $M^{}_{n}(\mathbb {C})$, then
the invertible matrix $X$ can be chosen to be the unit. If $P$ is
nontrivial, then we assume $0<\mathrm{rank}(P)=r<n$. This yields
$0<\mathrm{Tr}(P)=\mathrm{rank}(P)=r$. Thus there exists a nonzero
entry denoted by $(i,i)$ in the main diagonal of $P$. Let $U^{}_{1}$
be the elementary matrix switching all matrix elements on row $1$
with their counterparts on row $i$. Then the $(1,1)$ entry of
$U^{}_{1}PU^{*}_{1}$ equals the $(i,i)$ entry of $P$. With respect
to $U^{}_{1}PU^{*}_{1}$, we construct an invertible operator
$X^{}_{1}$ as in $(2.2)$. Then
$P^{}_{1}(=X^{}_{1}U^{}_{1}P(X^{}_{1}U^{}_{1})^{-1})$ can be
expressed in the form
$$P^{}_{1}=\begin{pmatrix}
1&\alpha^{}_{12}&\cdots&\alpha^{}_{1n}\\
0&\alpha^{}_{22}&\cdots&\alpha^{}_{2n}\\
\vdots&\vdots&\ddots&\vdots\\
0&\alpha^{}_{n2}&\cdots&\alpha^{}_{nn}\\
\end{pmatrix}^{}_{n\times n}. \eqno{(2.10)}$$ Let $P^{}_{22}$ be the
matrix of the form
$$P^{}_{22}=\begin{pmatrix}
\alpha^{}_{22}&\cdots&\alpha^{}_{2n}\\
\vdots&\ddots&\vdots\\
\alpha^{}_{n2}&\cdots&\alpha^{}_{nn}\\
\end{pmatrix}^{}_{(n-1)\times(n-1)}. \eqno{(2.11)}$$ Then $P^{}_{22}$
is an idempotent in $M^{}_{n-1}(\mathbb {C})$ and
$\mathrm{Tr}(P^{}_{22})=r-1$. If $r=1$, then $P^{}_{22}=0$. If
$r>1$, then there exists a nonzero entry $\alpha^{}_{jj}$ in the
main diagonal of $P^{}_{22}$. Let $U^{}_{2}$ be the elementary
matrix in $M^{}_{n}(\mathbb {C})$ switching all matrix elements on
row $2$ with their counterparts on row $j$. Then the $(2,2)$ entry
of $U^{}_{2}X^{}_{1}U^{}_{1}P(U^{}_{2}X^{}_{1}U^{}_{1})^{-1}$ equals
the $(j,j)$ entry of $X^{}_{1}U^{}_{1}P(X^{}_{1}U^{}_{1})^{-1}$.
With respect to $P{}_{22}$, we construct an invertible operator
$\hat{X}^{}_{2}$ as in $(2.2)$. Then
$\hat{X}^{}_{2}P{}_{22}\hat{X}^{-1}_{2}$ can be expressed in the
form
$$\hat{X}^{}_{2}P{}_{22}\hat{X}^{-1}_{2}=\begin{pmatrix}
1&\beta^{}_{23}&\cdots&\beta^{}_{2n}\\
0&\beta^{}_{33}&\cdots&\beta^{}_{3n}\\
\vdots&\vdots&\ddots&\vdots\\
0&\beta^{}_{n3}&\cdots&\beta^{}_{nn}\\
\end{pmatrix}^{}_{(n-1)\times(n-1)}. \eqno{(2.12)}$$ Let $X^{}_{2}$
be of the form
$$X^{}_{2}=\begin{pmatrix}
1&0\\
0&\hat{X}^{}_{2}\\
\end{pmatrix}
\begin{matrix}
\mathbb{C}\\
\mathbb{C}^{(n-1)}_{}\\
\end{matrix}. \eqno{(2.13)}$$ Then
$P^{}_{2}(=X^{}_{2}U^{}_{2}X^{}_{1}U^{}_{1}P(X^{}_{2}U^{}_{2}X^{}_{1}U^{}_{1})^{-1})$
can be expressed in the form
$$P^{}_{2}=\begin{pmatrix}
1&0&\beta^{}_{13}&\cdots&\beta^{}_{1n}\\
0&1&\beta^{}_{23}&\cdots&\beta^{}_{2n}\\
0&0&\beta^{}_{33}&\cdots&\beta^{}_{3n}\\
\vdots&\vdots&\vdots&\ddots&\vdots\\
0&0&\beta^{}_{n3}&\cdots&\beta^{}_{nn}\\
\end{pmatrix}^{}_{(n\times n)}. \eqno{(2.14)}$$
After $r$ steps, we obtain $P^{}_{r}$ of the form
$$P^{}_{r}=\begin{pmatrix}
I^{}_{r}&R\\
\mathbf{0}&\mathbf{0}^{}_{n-r}\\
\end{pmatrix}
\begin{array}{l}
\mathbb{C}^{(r)}\\
\mathbb{C}^{(n-r)}\\
\end{array}. \eqno{(2.15)}$$ Let $X^{}_{r+1}$ be of the form
$$X^{}_{r+1}=\begin{pmatrix}
I^{}_{r}&R\\
\mathbf{0}&I^{}_{n-r}\\
\end{pmatrix}
\begin{array}{l}
\mathbb{C}^{(r)}\\
\mathbb{C}^{(n-r)}\\
\end{array}. \eqno{(2.16)}$$ Then $X^{}_{r+1}$ is invertible and
$X^{-1}_{r+1}$ is of the form
$$X^{-1}_{r+1}=\begin{pmatrix}
I^{}_{r}&-R\\
\mathbf{0}&I^{}_{n-r}\\
\end{pmatrix}
\begin{array}{l}
\mathbb{C}^{(r)}\\
\mathbb{C}^{(n-r)}\\
\end{array}. \eqno{(2.17)}$$ Thus $X^{}_{r+1}P^{}_{r}X^{-1}_{r+1}$
is diagonal in $M^{}_{n}(\mathbb {C})$. And we obtain the invertible
matrix $X=X^{}_{r+1}X^{}_{r}U^{}_{r}\cdots X^{}_{1}U^{}_{1}$ such
that $XPX^{-1}$ is diagonal in $M^{}_{n}(\mathbb {C})$.
\end{proof}

With the algorithm in the foregoing two lemmas, we prove the
following lemma.

\begin{lemma}
If $P$ is an idempotent in $M^{}_{n}(L^{\infty}(\mu))$, then there
exists an invertible operator $X$ in $M^{}_{n}(L^{\infty}(\mu))$
such that $XPX^{-1}$ is diagonal.
\end{lemma}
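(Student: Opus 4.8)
The plan is to mimic the finite-dimensional algorithm from Lemma 2.1 and Lemma 2.2, but carried out over the commutative von Neumann algebra $L^{\infty}(\mu)$ rather than over $\mathbb{C}$. The essential new phenomenon is that a pointwise-nonzero entry need not be bounded away from zero, so one cannot simply divide by it; the rank of $P(\lambda)$ may jump as $\lambda$ varies over $\Lambda$. The remedy is to decompose $\mu$ (equivalently $\Lambda$) into finitely many Borel pieces on which the rank of the idempotent is constant, and on each piece to locate an entry that is bounded away from zero so that the divisions in $(2.2)$ make sense inside $L^{\infty}$.

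First I would write $P=(f_{ij})$ with $f_{ij}\in L^{\infty}(\mu)$ and observe that, since $P(\lambda)$ is an idempotent matrix for $\mu$-a.e.\ $\lambda$, its rank equals its trace $\sum_i f_{ii}(\lambda)$, which is an integer-valued, hence locally constant (up to null sets) bounded measurable function. So $\Lambda$ splits $\mu$-a.e.\ into disjoint Borel sets $\Lambda_0,\Lambda_1,\dots,\Lambda_n$ where $\operatorname{rank}P(\lambda)=r$ on $\Lambda_r$. On $\Lambda_0$, $P$ restricts to $\mathbf{0}$; nothing to do. Fix $r$ with $0<r\le n$ and work on $\Lambda_r$. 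By a further finite Borel partition of $\Lambda_r$ I may assume that one fixed diagonal entry, say $f_{ii}$, satisfies $|f_{ii}(\lambda)|\ge\varepsilon$ for some $\varepsilon>0$ on that piece: indeed for each $\lambda$ some diagonal entry has modulus $\ge r/n\ge 1/n$ (their sum is $r$), so the sets $\{\lambda:|f_{ii}(\lambda)|\ge 1/n\}$ cover $\Lambda_r$, and one intersects and disjointifies. Having done this, $1/f_{ii}\in L^{\infty}$ on that piece, a row-switching permutation matrix (constant, unitary) moves it to the $(1,1)$ slot, and the matrix $X$ of the shape $(2.2)$ has entries $-\alpha_{j1}/\alpha_{11}=-f_{j1}/f_{11}\in L^{\infty}$ — so $X$ and $X^{-1}$, given by $(2.3)$, lie in $M_n(L^{\infty}(\mu))$ over that piece. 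The computation in Lemma 2.1 is entirely entrywise/pointwise and therefore goes through verbatim over $L^{\infty}$: after conjugating by $X$ (and the permutation), the first column of $XPX^{-1}$ is $(1,0,\dots,0)^{T}$, with the caveat that the clearing matrix $Y$ of shape $(2.6)$ must now be assembled measurably. Its entries are the coefficients $\lambda_{r+s,k}$ expressing the lower rows of $XP$ in terms of rows $2,\dots,r$; these are, by Cramer's rule applied to an $r\times r$ minor that is invertible on the current piece, ratios of bounded measurable functions with denominator bounded away from zero — again after a further finite Borel partition chosen so that a fixed minor is uniformly invertible. Thus $Y,Y^{-1}\in M_n(L^{\infty})$ on that piece and the reduction to the block form $(2.7)$ is valid.

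Next I would iterate exactly as in Lemma 2.2: having normalized the first column, pass to the $(n-1)\times(n-1)$ lower-right idempotent block $P_{22}$ over the current piece, which again has constant trace $r-1$ there, partition once more so that a diagonal entry of $P_{22}$ is bounded away from zero, and repeat. After $r$ steps one reaches, on each final piece, a conjugate of $P$ of the form $(2.15)$, namely $\left(\begin{smallmatrix}I_r&R\\\mathbf{0}&\mathbf{0}\end{smallmatrix}\right)$ with $R\in M_{r,n-r}(L^{\infty})$ on that piece, and conjugating by $X_{r+1}=\left(\begin{smallmatrix}I_r&R\\\mathbf{0}&I_{n-r}\end{smallmatrix}\right)$ as in $(2.16)$–$(2.17)$ makes it the diagonal projection $\operatorname{diag}(1,\dots,1,0,\dots,0)$. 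All the matrices $X_k$, $U_k$, $Y$ produced are invertible with inverses in $M_n(L^{\infty})$ of that piece, so their finite product $X$ over each piece is invertible in $M_n(L^{\infty})$ of that piece with $XPX^{-1}$ diagonal there. Finally I would reassemble: the whole process used only finitely many Borel partitions, so $\Lambda$ is covered $\mu$-a.e.\ by finitely many disjoint Borel sets, on each of which we have an invertible $X$ with the stated property; summing these (using the characteristic functions of the pieces as central projections in $L^{\infty}(\mu)$, hence central in $M_n(L^{\infty}(\mu))$) gives a single invertible $X\in M_n(L^{\infty}(\mu))$ with $XPX^{-1}$ diagonal globally, because on each piece the resulting diagonal projection (with a possibly piece-dependent value of $r$) is still diagonal, and a direct sum of diagonal operators over a Borel partition is diagonal.

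The main obstacle — and the only place where real care beyond the finite-dimensional argument is needed — is the uniform-boundedness-away-from-zero issue: every time the finite-dimensional algorithm divides by a nonzero scalar, I must first carve out a Borel piece on which the corresponding $L^{\infty}$ function is bounded below, and I must verify that only finitely many such pieces ever arise so that the reassembly is a genuine finite sum (it is, since there are at most $n+1$ rank values and at each of the $\le n$ elimination stages at most finitely many thresholds $1/n, 1/(n-1),\dots$ and finitely many candidate minors are involved, giving a uniformly bounded number of refinements). Once this bookkeeping is set up, boundedness of all the $X_k,X_k^{-1},Y,Y^{-1}$ follows, and everything else is the pointwise content of Lemmas 2.1 and 2.2.
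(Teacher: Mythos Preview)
Your proposal is correct and follows essentially the same approach as the paper: partition $\Lambda$ by the integer-valued trace $\sum_i f_{ii}$, on each rank-$r$ piece further partition so that a fixed diagonal entry has modulus at least $r/n$, row-switch it to the $(1,1)$ slot, apply the elimination matrix of shape $(2.2)$ (now with entries in $L^{\infty}$), and iterate as in Lemma~2.2 before reassembling via central projections. The one detour you can drop is the concern about assembling the auxiliary matrix $Y$ of $(2.6)$ measurably: the \emph{conclusion} of Lemma~2.1 is a statement about the entries of $XPX^{-1}$ alone ($Y$ appears only as a proof device), so once $X\in M_n(L^{\infty})$ is in hand one may invoke Lemma~2.1 pointwise to read off the first column $(1,0,\dots,0)^{T}$ of $XPX^{-1}$, exactly as the paper does in passing from $(2.23)$ to $(2.24)$.
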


\begin{proof}
For the sake of simplicity, we write $P$ of the form
$$P=\begin{pmatrix}
f^{}_{11}&\cdots&f^{}_{1n}\\
\vdots&\ddots&\vdots\\
f^{}_{n1}&\cdots&f^{}_{nn}\\
\end{pmatrix}^{}_{n\times n}
\begin{matrix}
L^{2}(\mu)\\
\vdots\\
L^{2}(\mu)\\
\end{matrix}, \eqno{(2.18)}$$
where every $f^{}_{ij}$ is in $L^{\infty}(\mu)$ for
$i,j=1,\ldots,n$. We use the relaxed convention of treating Borel
representatives $f^{}_{ij}$ as elements in $L^{\infty}(\mu)$ for
$i,j=1,\ldots,n$ such that $P(\lambda)$ is an idempotent in
$M^{}_{n}(\mathbb {C})$ for every $\lambda$ in $\Lambda$. Thus we
obtain that
$$f=\sum^{n}_{i=1}f^{}_{ii} \eqno{(2.19)}$$ is an integer-valued
Borel simple function. With respect to the spectrum of $f$, we
obtain a Borel partition $\{\Lambda^{}_{k}\}^{n}_{k=0}$ of $\Lambda$
such that the standard trace of $P(\lambda)$ is $k$ for every
$\lambda$ in $\Lambda^{}_{k}$ and $k=0,\ldots,n$. Notice that
$\Lambda^{}_{k}$ may be of $\mu$-measure zero for some $k$s. Without
loss of generality, we assume $\Lambda=\Lambda^{}_{r}$ for $1\leq
r\leq n-1$.

Before we apply Lemma $2.1$ and Lemma $2.2$ to construct the
invertible operator, we need to modify the form in $(2.18)$.

We observe that there exists a Borel function
$f^{}_{i^{}_{1}i^{}_{1}}$ in the main diagonal of $P$ such that the
Borel subset $\Lambda^{}_{r1}$ of $\Lambda^{}_{r}$ of the form
$$\Lambda^{}_{r1}\triangleq
\{\lambda\in\Lambda^{}_{r}:|f^{}_{i^{}_{1}i^{}_{1}}(\lambda)|\geq
\frac{r}{n}\} \eqno{(2.20)}$$ is not of $\mu$-measure zero. In a
similar way, there exists a Borel function $f^{}_{i^{}_{2}i^{}_{2}}$
in the main diagonal of $P$ such that the Borel subset
$\Lambda^{}_{r2}$ of $\Lambda^{}_{r}\backslash\Lambda^{}_{r1}$ of
the form
$$\Lambda^{}_{r2}\triangleq
\{\lambda\in\Lambda^{}_{r}\backslash\Lambda^{}_{r1}:
|f^{}_{i^{}_{2}i^{}_{2}}(\lambda)|\geq \frac{r}{n}\} \eqno{(2.21)}$$
is not of $\mu$-measure zero. In this way, we obtain a Borel
partition $\{\Lambda^{}_{rj}\}^{k}_{j=1}$ of $\Lambda^{}_{r}$ with
$k\leq n$. There exists a unitary operator $U^{}_{1}$ in
$M^{}_{n}(L^{\infty}(\mu))$ such that $U^{}_{1}(\lambda)$ is an
elementary matrix in $M^{}_{n}(\mathbb{C})$ switching all matrix
entries in row $i^{}_{j}$ with their counterparts in row $1$ for
every $\lambda$ in $\Lambda^{}_{rj}$ and $j=1,\ldots,k$. Thus the
absolute value of the $(1,1)$ entry of $U^{}_{1}PU^{*}_{1}(\lambda)$
for every $\lambda$ in $\Lambda^{}_{r}$ is not less than $rn^{-1}$.
We write the operator $U^{}_{1}PU^{*}_{1}$ of the form
$$U^{}_{1}PU^{*}_{1}=\begin{pmatrix}
h^{}_{11}&\cdots&h^{}_{1n}\\
\vdots&\ddots&\vdots\\
h^{}_{n1}&\cdots&h^{}_{nn}\\
\end{pmatrix}^{}_{n\times n}
\begin{matrix}
L^{2}(\mu)\\
\vdots\\
L^{2}(\mu)\\
\end{matrix}. \eqno{(2.22)}$$ Thus we obtain
$|h^{}_{11}(\lambda)|\geq\frac{r}{n}$ for every $\lambda$ in
$\Lambda^{}_{r}$. Let $X^{}_{1}$ be constructed as in $(2.2)$
$$X^{}_{1}=\begin{pmatrix}
\mathbf{1}&0&\cdots&0\\
-{h^{}_{21}}\slash{h^{}_{11}}&\mathbf{1}&\cdots&0\\
\vdots&\vdots&\ddots&\vdots\\
-{h^{}_{n1}}\slash{h^{}_{11}}&0&\cdots&\mathbf{1}\\
\end{pmatrix}^{}_{n\times n}
\begin{matrix}
L^{2}(\mu)\\
L^{2}(\mu)\\
\vdots\\
L^{2}(\mu)\\
\end{matrix}. \eqno{(2.23)}$$
Write $P^{}_{1}=X^{}_{1}U^{}_{1}PU^{-1}_{1}X^{-1}_{1}$. By Lemma
$2.1$, $P^{}_{1}$ can be expressed in the form
$$P^{}_{1}=\begin{pmatrix}
\mathbf{1}&\varphi^{}_{12}&\cdots&\varphi^{}_{1n}\\
0&\varphi^{}_{22}&\cdots&\varphi^{}_{2n}\\
\vdots&\vdots&\ddots&\vdots\\
0&\varphi^{}_{n2}&\cdots&\varphi^{}_{nn}\\
\end{pmatrix}^{}_{n\times n}
\begin{matrix}
L^{2}(\mu)\\
L^{2}(\mu)\\
\vdots\\
L^{2}(\mu)\\
\end{matrix}. \eqno{(2.24)}$$ Combining the preceding construction of
$U^{}_{1}$ and the algorithm developed in the proof of Lemma $2.2$,
we can construct an invertible operator $X$ in
$M^{}_{n}(L^{\infty}(\mu))$ dependent on $P$ such that $XPX^{-1}$ is
diagonal.
\end{proof}

By applying the preceding lemma, we define a $\mu$-measurable
function $\mbox{Tr}(P)$ of the form
$$\mbox{Tr}(P)(\lambda)\triangleq\mbox{Tr}(P(\lambda))=\mbox{rank}(P(\lambda))
\eqno{(2.25)}$$ for every idempotent $P$ in
$M^{}_{n}(L^{\infty}(\mu))$ and almost every $\lambda$ in $\Lambda$,
where $\mbox{Tr}$ is the standard trace on $P(\lambda)$. Denote by
$\mathscr{E}^{}_{n}$ the set of central projections in
$M^{}_{n}(L^{\infty}(\mu))$. For every central projection $E$ in
$\mathscr{E}^{}_{n}$ there exists a Borel subset $\Lambda^{}_{E}$ of
$\Lambda$ such that
$$\mbox{Tr}(E^{}_{})(\lambda)=\begin{cases}
n,&\mbox{if }\lambda\in \Lambda^{}_{E},\\
0,&\mbox{if }\lambda\in\Lambda\backslash\Lambda^{}_{E}.
\end{cases} \eqno{(2.26)}$$

On generalizing the Jordan canonical form theorem, we need to
introduce the concept `finite frame' in the relative commutant of an
operator $A$ in $M^{}_{n}(L^{\infty}(\mu))$.

\begin{definition}
Let $A$ be an operator in $M^{}_{n}(L^{\infty}(\mu))$. A finite
subset $\{P^{}_{k}\}^{m}_{k=1}$ of idempotents in
$\{A\}^{\prime}\cap M^{}_{n}(L^{\infty}(\mu))$ is said to be a
\textit{finite frame} of $\{A\}^{\prime}\cap
M^{}_{n}(L^{\infty}(\mu))$ if the following conditions are
satisfied:
\begin{enumerate}
\item $P^{}_{i}P^{}_{j}=P^{}_{j}P^{}_{i}=0$ for $i\neq j$ and
$i,j=1,\dots,k$;
\item the idempotent $\sum^{m}_{k=1} P^{}_{k}$ equals to the identity of
$M^{}_{n}(L^{\infty}(\mu))$;
\item $P^{}_{k}$ is minimal in the set
$\{P\in\mathscr{P}^{}_{k}:P(\lambda)\neq 0 \mbox{ a.e. } [\mu]
\mbox{ on } \Lambda^{}_{k}\}$, where we write
$\mathscr{P}^{}_{k}=\{P\in\{A\}^{\prime}\cap
M^{}_{n}(L^{\infty}(\mu)):P^{2}=P, PP^{}_{k}=P^{}_{k}P\}$ and
$\Lambda^{}_{k}=\{\lambda\in\Lambda:P^{}_{k}(\lambda)\neq 0\}$ is
the support of $\mbox{Tr}(P^{}_{k})$;
\item the first three items also hold for every
$\{P^{}_{k}E^{}_{}\}^{m}_{k=1}$ restricted on $\mathrm{ran}E^{}_{}$,
where $E^{}_{}$ is a central projection and $\mathrm{Tr}(E^{}_{})$
is supported on a Borel subset $\Lambda^{}_{E}$ of $\Lambda$.
\end{enumerate}
Furthermore, a finite frame $\{P^{}_{k}\}^{m}_{k=1}$ in
$\{A\}^{\prime}\cap M^{}_{n}(L^{\infty}(\mu))$ is said to be
\textit{self-adjoint} if $P^{}_{k}$ is self-adjoint for
$k=1,\ldots,m$.
\end{definition}

This concept is inspired by the `cross section' in fibre bundles.
The difference is that every element in a finite frame here is Borel
measurable. In the rest of this section, we establish a relation
between the local structures of $A$ in $M^{}_{n}(L^{\infty}(\mu))$
and a finite frame in $\{A\}^{\prime}\cap
M^{}_{n}(L^{\infty}(\mu))$. First, we give a necessary and
sufficient condition for $\{A\}^{\prime}\cap
M^{}_{n}(L^{\infty}(\mu))$ containing a finite frame.

\begin{proposition}
Let $A$ be an operator in $M^{}_{n}(L^{\infty}(\mu))$. Then
$\{A\}^{\prime}\cap M^{}_{n}(L^{\infty}(\mu))$ contains a finite
frame if and only if $\{A\}^{\prime}\cap M^{}_{n}(L^{\infty}(\mu))$
contains a bounded maximal abelian set of idempotents $\mathscr{P}$.
\end{proposition}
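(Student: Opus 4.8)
The plan is to prove the two implications separately, using Lemma $2.3$ as the key technical tool throughout.

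\textbf{The easy direction.} Suppose $\{A\}^{\prime}\cap M^{}_{n}(L^{\infty}(\mu))$ contains a bounded maximal abelian set of idempotents $\mathscr{P}$. I would first argue that $\mathscr{P}$, being abelian and maximal, is necessarily generated (as a set of idempotents) by a ``minimal'' sub-collection in the sense of item $(3)$ of Definition $2.4$. Concretely: inside $\mathscr{P}$ consider the sums of the form $\sum P^{}_{k}$; since $I \in \mathscr{P}$ (maximality forces $I$ to be in it) and $\mathscr{P}$ is bounded, one can extract a finite decomposition $I = \sum^{m}_{k=1} P^{}_{k}$ into pairwise orthogonal idempotents of $\mathscr{P}$ that are minimal relative to $\mathscr{P}$ over their supports; the central-cutdown condition $(4)$ is automatic because central projections commute with everything in $\mathscr{P}$ and cutting down a minimal-over-its-support idempotent by a central projection either kills it or leaves it minimal over the smaller support. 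The finiteness of $m$ is where boundedness of $\mathscr{P}$ enters: if no finite orthogonal decomposition into relatively-minimal idempotents existed, one could build an infinite orthogonal family and, by a standard perturbation/Gram–Schmidt argument, arrange the partial sums to have norms tending to infinity, contradicting boundedness. This produces a finite frame.

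\textbf{The hard direction.} Conversely, suppose $\{A\}^{\prime}\cap M^{}_{n}(L^{\infty}(\mu))$ contains a finite frame $\{P^{}_{k}\}^{m}_{k=1}$. I would use Lemma $2.3$ fiberwise-compatibly: apply the algorithm of Lemma $2.3$ to each $P^{}_{k}$ (or rather to the whole commuting family at once, since they are pairwise orthogonal and sum to $I$, the algorithm simultaneously diagonalizes them) to find an invertible $X \in M^{}_{n}(L^{\infty}(\mu))$ with each $XP^{}_{k}X^{-1}$ a diagonal projection; then $XAX^{-1}$ has a block-diagonal structure compatible with the ranges of these diagonal projections, and it suffices to produce a bounded maximal abelian set of idempotents in each block $\{XAX^{-1}|_{\mathrm{ran}(XP^{}_{k}X^{-1})}\}^{\prime}$, which is itself of the form $\{B\}^{\prime}\cap M^{}_{n^{}_{k}}(L^{\infty}(\mu|_{\Lambda^{}_{k}}))$ with $B$ ``relatively minimal'' — i.e. having no nontrivial idempotents in its commutant that survive a.e. on its support. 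For such a $B$, the set of \emph{all} idempotents in $\{B\}^{\prime}\cap M^{}_{n^{}_{k}}(L^{\infty}(\mu))$ that are diagonal (after a single conjugation) together with a genuinely maximal abelian extension is bounded: I would take a maximal abelian set of idempotents $\mathscr{P}^{}_{k}$ containing $\{0, I\}$ and show using the minimality of $P^{}_{k}$ plus Lemma $2.3$ (to diagonalize any candidate idempotent) that all its elements have a common bound. Reassembling over $k=1,\dots,m$ and conjugating back by $X^{-1}$ yields a bounded maximal abelian set of idempotents in $\{A\}^{\prime}\cap M^{}_{n}(L^{\infty}(\mu))$; maximality is preserved under the conjugation and under taking direct sums because the $P^{}_{k}$ are the ``atoms'' of the center of the picture.

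\textbf{Main obstacle.} The delicate point is the uniform boundedness in the hard direction: an abelian family of idempotents in a von Neumann algebra need not be bounded in general (this is precisely the phenomenon behind Example $2.11$), so the whole content is to exploit the minimality clause $(3)$ and its central-stability clause $(4)$ to force boundedness. I expect the argument there to go by contradiction: if a maximal abelian set of idempotents $\mathscr{P}$ extending the frame were unbounded, then along a sequence $Q^{}_{j} \in \mathscr{P}$ with $\|Q^{}_{j}\| \to \infty$ one extracts, via the measurable selection implicit in Lemma $2.3$, a Borel set of positive measure on which the fiberwise ranks of the ``off-diagonal'' parts of $Q^{}_{j}(\lambda)$ witness a genuine splitting refining the frame, contradicting the minimality of the relevant $P^{}_{k}$ over its support. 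Making this measurable-selection step precise — ensuring the conjugating invertibles from Lemma $2.3$ can be chosen in $M^{}_{n}(L^{\infty}(\mu))$ uniformly — is the technical heart of the proof, but it is exactly what the algorithmic formulation of Lemmas $2.1$–$2.3$ was set up to provide.
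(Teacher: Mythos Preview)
You have the two directions reversed in difficulty, and this leads to a genuine gap in each.

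\textbf{Frame $\Rightarrow$ bounded maximal abelian set.} You treat this as the hard direction and propose an elaborate argument: diagonalize the frame via Lemma~$2.3$, pass to blocks, and then argue by contradiction that any maximal abelian extension must be bounded. But the paper dispatches this direction in one sentence: the finite frame $\{P_k\}_{k=1}^{m}$ together with the set $\mathscr{E}_n$ of central projections already generates a bounded maximal abelian set of idempotents. Every element of this set has the form $\sum_{k} E_k P_k$ with $E_k$ central projections, so boundedness is immediate (finitely many $P_k$, central projections have norm $1$); maximality is exactly what the minimality clause~(3) and its central stability~(4) in Definition~$2.4$ encode. Your contradiction strategy is not needed, and in fact your phrasing (``show\ldots that all its elements have a common bound'' for an \emph{arbitrary} maximal abelian extension) is aiming at a stronger statement than what is required.

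\textbf{Bounded maximal abelian set $\Rightarrow$ frame.} Here your extraction of minimal idempotents is the right idea and matches the paper. But your finiteness argument is flawed: you claim that an infinite orthogonal family would force partial sums with norms tending to infinity, contradicting boundedness of $\mathscr{P}$. This does not follow --- if $\mathscr{P}$ is bounded and closed under sums of orthogonal elements, the partial sums are bounded too, so there is no contradiction. The paper's termination argument is different and uses the ambient $M_n$ structure directly: the function $\mathrm{Tr}(I - P_1 - \cdots - P_k)$ is integer-valued, bounded above by $n$, and strictly decreases on its support at each step, so the iteration halts after at most $n$ steps. You should replace the norm-blowup heuristic with this rank/trace argument.
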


\begin{proof}
Note that if $\{P^{}_{k}\}^{m}_{k=1}$ is a finite frame of
$\{A\}^{\prime}\cap M^{}_{n}(L^{\infty}(\mu))$, then
$\{P^{}_{k}\}^{m}_{k=1}$ and $\mathscr{E}^{}_{n}$ generate a bounded
maximal abelian set of idempotents $\mathscr{P}$ in
$\{A\}^{\prime}\cap M^{}_{n}(L^{\infty}(\mu))$. Actually, every
idempotent $P$ in $\mathscr{P}$ is a finite combination of
idempotents $\{P^{}_{k}\}^{m}_{k=1}$ cut by some central
projections.

On the other hand, if $\mathscr{P}$ is a bounded maximal abelian set
of idempotents in $\{A\}^{\prime}\cap M^{}_{n}(L^{\infty}(\mu))$,
then we write
$\mathscr{P}^{}_{0}=\{P\in\mathscr{P}:\mbox{Tr}(P)\mbox{ is
supported on }\Lambda\}$. The set $\mathscr{P}^{}_{0}$ is not empty,
since $\mathscr{P}^{}_{0}$ contains the identity of
$M^{}_{n}(L^{\infty}(\mu))$. Note that every $\mbox{Tr}(P)$ is a
$\mu$-measurable integer-valued simple function for $P$ in
$\mathscr{P}^{}_{}$. Thus there exists a maximal totally-ordered
subset of $\mathscr{P}^{}_{0}$. Since $\mathscr{P}^{}_{}$ is bounded
and closed in the weak-operator topology, there exists a minimal
idempotent $P^{}_{1}$ in $\mathscr{P}^{}_{0}$, which means there is
no proper sub-idempotents of $P^{}_{1}$ in $\mathscr{P}^{}_{0}$. If
$P^{}_{1}=I$, then $\{P^{}_{1}\}$ is a finite frame of
$\mathscr{P}$. Otherwise, denote by $\Lambda^{}_{1}$ the support of
$I-P^{}_{1}$ and write
$\mathscr{P}^{}_{1}=\{P\in(I-P^{}_{1})\mathscr{P}^{}_{}:P\mbox{ is
supported on }\Lambda^{}_{1}\}$. Thus there exists a bounded maximal
totally-ordered subset of $\mathscr{P}^{}_{1}$. By reduction, there
exists a minimal idempotent $P^{}_{2}$ in $\mathscr{P}^{}_{1}$. If
$P^{}_{2}\neq I-P^{}_{1}$, then we iterate the preceding procedure.
After finite steps the procedure stops and we obtain a finite subset
$\{P^{}_{k}\}^{m}_{k=1}$ in $\mathscr{P}^{}_{}$. It's obvious that
the set $\{P^{}_{k}E\}^{m}_{k=1}$ satisfies the first three items
restricted on $\mathrm{ran}E$ in Definition $2.4$ for every central
projection $E$ in $M^{}_{n}(L^{\infty}(\mu))$. Therefore,
$\{P^{}_{k}\}^{m}_{k=1}$ is a required finite frame in
$\mathscr{P}^{}_{}$.
\end{proof}

In the following lemma, for an operator $A$ in
$M^{}_{n}(L^{\infty}(\mu))$ such that the relative commutant
$\{A\}^{\prime}\cap M^{}_{n}(L^{\infty}(\mu))$ contains a finite
frame $\{P^{}_{k}\}^{m}_{k=1}$, we modify the finite frame to be
self-adjoint.

\begin{lemma}
For an operator $A$ in $M^{}_{n}(L^{\infty}(\mu))$ such that
$\{A\}^{\prime}\cap M^{}_{n}(L^{\infty}(\mu))$ contains a finite
frame $\{P^{}_{k}\}^{m}_{k=1}$, there exists an invertible operator
$X$ in $M^{}_{n}(L^{\infty}(\mu))$ such that every $XP^{}_{k}X^{-1}$
is diagonal for $k=1,\ldots,m$.
\end{lemma}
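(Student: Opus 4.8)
The plan is to simultaneously diagonalize the mutually orthogonal idempotents $P^{}_{1},\ldots,P^{}_{m}$ by iterating the algorithm from Lemma $2.3$, but carrying along the information about the already-diagonalized blocks. First I would apply Lemma $2.3$ to $P^{}_{1}$ alone, obtaining an invertible $X^{}_{1}$ in $M^{}_{n}(L^{\infty}(\mu))$ with $X^{}_{1}P^{}_{1}X^{-1}_{1}$ diagonal, say equal to a central diagonal projection $Q^{}_{1}$ determined by the $\mu$-measurable function $\mathrm{Tr}(P^{}_{1})$. Conjugating the whole frame by $X^{}_{1}$ preserves the defining properties of a finite frame: the $X^{}_{1}P^{}_{k}X^{-1}_{1}$ are still mutually orthogonal idempotents in $\{X^{}_{1}AX^{-1}_{1}\}^{\prime}\cap M^{}_{n}(L^{\infty}(\mu))$ summing to the identity. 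Because $Q^{}_{1}=X^{}_{1}P^{}_{1}X^{-1}_{1}$ is orthogonal to each $X^{}_{1}P^{}_{k}X^{-1}_{1}$ for $k\geq 2$, the support of $\mathrm{Tr}(Q^{}_{1})$ (the positions of the $1$'s on the diagonal, varying measurably in $\lambda$) is disjoint from the ranges of the remaining idempotents, so the remaining idempotents are supported, fibrewise, on the complementary coordinate subspace.

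Next I would restrict attention to that complementary corner. Write $F^{}_{1}=I-Q^{}_{1}$, a central projection, and regard $F^{}_{1}M^{}_{n}(L^{\infty}(\mu))F^{}_{1}$ as a direct-integral bundle of matrix algebras of (measurably varying) size $n-\mathrm{Tr}(P^{}_{1})(\lambda)$. On this corner, $\{X^{}_{1}P^{}_{k}X^{-1}_{1}F^{}_{1}\}^{m}_{k=2}$ is again a finite frame (item $(4)$ of Definition $2.4$, applied to the central projection $F^{}_{1}$, guarantees the frame axioms survive the cut). Here the minor technical point is that the fibre dimension is not constant; one handles this exactly as in the proof of Lemma $2.3$, by first partitioning $\Lambda$ into the Borel sets on which $\mathrm{Tr}(P^{}_{1})$ takes a given integer value and working on each piece, then reassembling — the row-switching unitaries and the triangular invertibles of $(2.2)$, $(2.9)$ are all chosen $\lambda$-measurably, so the pieces glue into a single invertible operator in $M^{}_{n}(L^{\infty}(\mu))$. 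Applying Lemma $2.3$ inside this corner produces an invertible $X^{}_{2}$ (supported on $F^{}_{1}$, extended by the identity on $Q^{}_{1}$) with $X^{}_{2}X^{}_{1}P^{}_{2}X^{-1}_{1}X^{-1}_{2}$ diagonal; since $X^{}_{2}$ acts as the identity on the range of $Q^{}_{1}$, it does not disturb the already-diagonalized $P^{}_{1}$.

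Iterating this $m$ times — at stage $j$ working inside the central corner orthogonal to the $j-1$ diagonal projections produced so far, which by orthogonality of the frame contains the ranges of $P^{}_{j},\ldots,P^{}_{m}$ — yields invertibles $X^{}_{1},\ldots,X^{}_{m}$ whose product $X=X^{}_{m}\cdots X^{}_{1}$ is invertible in $M^{}_{n}(L^{\infty}(\mu))$ and satisfies: $XP^{}_{k}X^{-1}$ is diagonal for every $k=1,\ldots,m$. (The last idempotent is then automatically diagonal, being $I$ minus the sum of the first $m-1$ diagonal projections, by item $(2)$.) The main obstacle I anticipate is bookkeeping the measurable, non-constant fibre dimensions through the iteration and verifying that each conjugation genuinely leaves the previously diagonalized idempotents untouched; both are handled by the orthogonality built into Definition $2.4$ together with the measurable-selection apparatus already set up in Lemma $2.3$, so no new ideas are needed beyond careful organization.
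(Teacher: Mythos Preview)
Your approach is essentially the paper's: apply Lemma~2.3 to $P_1$, then use orthogonality of the frame to see that the remaining idempotents live in the complementary corner, and iterate there. Two small corrections are worth noting. First, $Q_1=X_1P_1X_1^{-1}$ and $F_1=I-Q_1$ are \emph{diagonal} projections, not central ones --- central projections in $M_n(L^{\infty}(\mu))$ have the form $\chi_E\cdot I_n$, whereas $Q_1$ has fibrewise rank $<n$. Consequently item~(4) of Definition~2.4 does not apply to the cut by $F_1$; but you do not need it, since orthogonality $P_1P_k=P_kP_1=0$ alone forces $X_1P_kX_1^{-1}$ to have block form $\begin{pmatrix}0&0\\0&Q_k\end{pmatrix}$ for $k\geq 2$, which is all the iteration requires. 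Second, the paper sidesteps your varying-fibre-dimension bookkeeping by partitioning $\Lambda$ \emph{once at the outset} into Borel pieces on which every $\mathrm{Tr}(P_k)$ is constant; on each piece the corner after step $j$ is literally $M_{n-r_1-\cdots-r_j}(L^{\infty})$, so Lemma~2.3 applies verbatim and no measurable-selection subtleties arise beyond those already absorbed in Lemma~2.3.
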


\begin{proof}
Assume that $A$ is an operator in $M^{}_{n}(L^{\infty}(\mu))$ such
that $\{A\}^{\prime}\cap M^{}_{n}(L^{\infty}(\mu))$ contains a
finite frame $\{P^{}_{k}\}^{m}_{k=1}$. There exists a finite Borel
partition $\{\Lambda^{}_{s}\}^{r}_{s=1}$ of $\Lambda$ such that for
every central projection $E^{}_{\Lambda^{}_{s}}$ as in $(2.26)$ and
$k=1,\ldots,m$, the function
$\mbox{Tr}(P^{}_{k}E^{}_{\Lambda^{}_{s}})$ takes a constant
a.e.~$[\mu]$ on $\Lambda^{}_{s}$. Therefore without loss of
generality, we assume that $\mbox{Tr}(P^{}_{k})(\lambda)=r^{}_{k}$
(a constant function) a.e.~$[\mu]$ on $\Lambda^{}_{}$ for
$k=1,\ldots,m$. By Lemma $2.3$, there exists an invertible operator
$X^{}_{1}$ in $M^{}_{n}(L^{\infty}(\mu))$ such that
$X^{}_{1}P^{}_{1}X^{-1}_{1}$ is diagonal and
$X^{}_{1}P^{}_{k}X^{-1}_{1}$ is of the form
$$X^{}_{1}P^{}_{k}X^{-1}_{1}=\begin{pmatrix}
0&0\\
0&Q^{}_{k}\\
\end{pmatrix}
\begin{array}{l}
(L^{2}(\mu))^{(r^{}_{1})}\\
(L^{2}(\mu))^{(n-r^{}_{1})}\\
\end{array}, \eqno{(2.27)}$$
where $Q^{}_{k}$ is an idempotent in
$M^{}_{n-r^{}_{1}}(L^{\infty}(\mu))$ for $k=2,\ldots,m$. Again by
Lemma $2.3$, there exists an invertible operator $Y^{}_{2}$ in
$M^{}_{n-r^{}_{1}}(L^{\infty}(\mu))$ such that
$Y^{}_{2}Q^{}_{2}Y^{-1}_{2}$ is diagonal and
$Y^{}_{2}Q^{}_{k}Y^{-1}_{2}$ is of the form
$$Y^{}_{2}Q^{}_{k}Y^{-1}_{2}=\begin{pmatrix}
0&0\\
0&R^{}_{k}\\
\end{pmatrix}
\begin{array}{l}
(L^{2}(\mu))^{(r^{}_{2})}\\
(L^{2}(\mu))^{(n-r^{}_{1}-r^{}_{2})}\\
\end{array}, \eqno{(2.28)}$$
where $R^{}_{k}$ is an idempotent in
$M^{}_{n-r^{}_{1}-r^{}_{2}}(L^{\infty}(\mu))$ for $k=3,\dots,m$. Let
$X^{}_{2}$ be of the form
$$X^{}_{2}=\begin{pmatrix}
I&0\\
0&Y^{}_{2}\\
\end{pmatrix}
\begin{array}{l}
(L^{2}(\mu))^{(r^{}_{1})}\\
(L^{2}(\mu))^{(n-r^{}_{1})}\\
\end{array}. \eqno{(2.29)}$$ Then $X^{}_{2}$ is invertible in
$M^{}_{n}(L^{\infty}(\mu))$. Iterating the preceding procedure, we
obtain $m-1$ invertible operators $\{X^{}_{k}\}^{m-1}_{k=1}$ in
$M^{}_{n}(L^{\infty}(\mu))$. The invertible operator
$X=X^{}_{m-1}\cdots X^{}_{1}$ is as required in this lemma.
\end{proof}

By Lemma $2.6$, we investigate the local structures of $A$ in
$M^{}_{n}(L^{\infty}(\mu))$ if the relative commutant
$\{A\}^{\prime}\cap M^{}_{n}(L^{\infty}(\mu))$ contains a finite
frame. For this purpose, we need to introduce the following
definition.

\begin{definition}
For an operator $A=(A^{}_{ij})^{}_{1\leq i,j\leq n}$ in
$M^{}_{n}(L^{\infty}(\mu))$, the \textit{$k$-diagonal} entries are
those $A^{}_{ij}$ with $j=i+k$.
\end{definition}

If $\{A\}^{\prime}\cap M^{}_{n}(L^{\infty}(\mu))$ contains a finite
frame $\{P^{}_{k}\}^{m}_{k=1}$, then we observe that every
$\mathrm{Tr}(P^{}_{k})$ is a $\mu$-measurable simple function on
$\Lambda$ for $k=1,\ldots,m$. Thus there exists a finite Borel
partition $\{\Lambda^{}_{j}\}^{r}_{j=1}$ of $\Lambda$ with respect
to $\{\mathrm{Tr}(P^{}_{k})\}^{m}_{k=1}$ such that every
$\mathrm{Tr}(P^{}_{k})$ takes a constant a.e.~$[\mu]$ on every
$\Lambda^{}_{j}$ for $j=1,\ldots,r$. Therefore, we can focus on the
local structures of $A$ restricted on each
$(L^{2}(\Lambda^{}_{j},\mu))^{(n)}$ respectively, and then combine
them together. In this sense, we assume that every
$\mathrm{Tr}(P^{}_{k})$ takes a constant a.e.~$[\mu]$ on $\Lambda$
for $k=1,2,\ldots,m$, and we obtain the following proposition.

\begin{proposition}
Let $A$ be an operator in $M^{}_{n}(L^{\infty}(\mu))$ and
$\{A\}^{\prime}\cap M^{}_{n}(L^{\infty}(\mu))$ contains a finite
frame $\{P^{}_{k}\}^{m}_{k=1}$ such that every
$\mathrm{Tr}(P^{}_{k})$ takes a constant a.e.~$[\mu]$ on $\Lambda$.
Then there exists an invertible operator $X$ in
$M^{}_{n}(L^{\infty}(\mu))$ such that $XAX^{-1}$ is of the block
diagonal form
$$XAX^{-1}=\begin{pmatrix}
A^{}_{{1}}&\cdots&0\\
\vdots&\ddots&\vdots\\
0&\cdots&A^{}_{{m}}\\
\end{pmatrix}^{}_{m\times m} \eqno{(2.30)}$$  and $A^{}_{{k}}$ in
$M^{}_{n^{}_{k}}(L^{\infty}(\mu))$ is of the upper triangular form
$$A^{}_{{k}}=\begin{pmatrix}
A^{{k}}_{11}&A^{{k}}_{12}&\cdots&A^{{k}}_{1,n^{}_{k}}\\
0&A^{{k}}_{22}&\cdots&A^{{k}}_{2,n^{}_{k}}\\
\vdots&\vdots&\ddots&\vdots\\
0&0&\cdots&A^{{k}}_{n^{}_{k}n^{}_{k}}\\
\end{pmatrix}^{}_{n^{}_{k}\times n^{}_{k}}
\begin{matrix}
L^{2}(\mu)\\
L^{2}(\mu)\\
\vdots\\
L^{2}(\mu)\\
\end{matrix} \eqno{(2.31)}$$ such that:
\begin{enumerate}
\item the equality $\sum^{m}_{k=1}n^{}_{k}=n$ holds;
\item the equality $A^{{k}}_{jj}=A^{{k}}_{11}$ holds for
$j=2,\ldots,n^{}_{k}$ and $k=1,\ldots,m$;
\item the support of
$A^{{k}}_{j,j+1}$ equals $\Lambda$ for $j=1,\ldots,n^{}_{k}-1$ and
$k=1,\ldots,m$.
\end{enumerate}
\end{proposition}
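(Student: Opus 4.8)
The plan is to use the self-adjoint (diagonal) finite frame produced by Lemma~$2.6$ to block-diagonalize $A$, and then to analyze the structure of each block using the minimality condition in Definition~$2.4(3)$. First, apply Lemma~$2.6$ to obtain an invertible $X^{}_{0}$ in $M^{}_{n}(L^{\infty}(\mu))$ so that each $X^{}_{0}P^{}_{k}X^{-1}_{0}$ is a diagonal projection $E^{}_{k}$; since $\{P^{}_{k}\}^{m}_{k=1}$ is a finite frame, the $E^{}_{k}$ are mutually orthogonal diagonal projections summing to the identity, and $\mathrm{Tr}(E^{}_{k})=\mathrm{Tr}(P^{}_{k})=n^{}_{k}$ is constant a.e.~$[\mu]$. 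Because each $P^{}_{k}$ (hence each $E^{}_{k}$) lies in $\{X^{}_{0}AX^{-1}_{0}\}^{\prime}\cap M^{}_{n}(L^{\infty}(\mu))$, the operator $X^{}_{0}AX^{-1}_{0}$ commutes with every $E^{}_{k}$ and therefore is already block diagonal with respect to the ranges of the $E^{}_{k}$; relabelling coordinates so that $E^{}_{k}$ is the projection onto the $k$-th consecutive block of $n^{}_{k}$ copies of $L^{2}(\mu)$, this gives the form $(2.30)$ with blocks $A^{}_{k}$ in $M^{}_{n^{}_{k}}(L^{\infty}(\mu))$ and $\sum^{m}_{k=1}n^{}_{k}=n$, which is item~(1).

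Next I would fix one block $A^{}_{k}$ and forget the others; the key point is that the minimality of $P^{}_{k}$ in $\mathscr{P}^{}_{k}$ translates, after conjugating by $X^{}_{0}$, into a statement about idempotents in $\{A^{}_{k}\}^{\prime}\cap M^{}_{n^{}_{k}}(L^{\infty}(\mu))$: there is no non-trivial idempotent in this relative commutant whose trace function is supported a.e.~$[\mu]$ on all of $\Lambda$. Passing to fibres via von Neumann's reduction theory (or simply evaluating at a.e.~$\lambda$), this forces $A^{}_{k}(\lambda)$ in $M^{}_{n^{}_{k}}(\mathbb{C})$ to be, up to a $\lambda$-measurable similarity, a single Jordan block with scalar spectrum — more precisely its relative commutant in $M^{}_{n^{}_{k}}(\mathbb{C})$ must be such that $A^{}_{k}(\lambda)$ is non-derogatory with a single eigenvalue a.e.. Using the algorithm of Lemma~$2.2$ made measurable exactly as in the proof of Lemma~$2.3$, I can choose a further invertible $X^{}_{1}$ in $M^{}_{n^{}_{k}}(L^{\infty}(\mu))$ bringing $A^{}_{k}$ to upper triangular form $(2.31)$ with all diagonal entries equal to $A^{{k}}_{11}$ a.e.~$[\mu]$ (item~(2)), since the diagonal entries record the common eigenvalue fibrewise; assembling the $X^{}_{1}$'s over all $k$ and composing with $X^{}_{0}$ yields the desired global $X$.

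The remaining point, item~(3), that the support of the super-diagonal entry $A^{{k}}_{j,j+1}$ is all of $\Lambda$, is where the real content lies and is the step I expect to be the main obstacle. If $A^{{k}}_{j,j+1}$ vanished on a Borel subset $\Gamma$ of positive $\mu$-measure, then on $\Gamma$ the triangular block $A^{}_{k}$ would split — the characteristic function $\chi^{}_{\Gamma}$ times the projection onto the first $j$ coordinates of the block would give a non-trivial idempotent commuting with $A^{}_{k}$ whose trace is supported on a set meeting $\Lambda^{}_{k}$ in positive measure — contradicting either the minimality in Definition~$2.4(3)$ or the ``cut by a central projection'' clause~$2.4(4)$. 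Making this splitting argument precise (choosing the $1$-diagonal entries one at a time, and at each stage peeling off any zero set into a central summand on which the frame would fail to be minimal) is the delicate part; one has to be careful that the measurable similarities used at earlier stages do not reintroduce zeros, which is handled by performing the reduction ``$1$-diagonal by $1$-diagonal'' and invoking the constancy of $\mathrm{Tr}(P^{}_{k})$ together with clause~$2.4(4)$ to rule out central cuts. Once this is done, combining the three items over the Borel partition $\{\Lambda^{}_{j}\}^{r}_{j=1}$ of the preceding remark gives the proposition in general.
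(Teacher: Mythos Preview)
Your high-level architecture matches the paper's: use Lemma~$2.6$ to diagonalize the frame, block-diagonalize $A$, then analyze each block by showing that the ``only the identity is in the frame'' condition forces the Jordan-like structure. Two of your steps, however, contain genuine gaps.

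First, for the upper-triangularization step you invoke ``the algorithm of Lemma~$2.2$ made measurable exactly as in the proof of Lemma~$2.3$''. But Lemmas~$2.1$--$2.3$ are about diagonalizing \emph{idempotents}, not about Schur-triangularizing an arbitrary element of $M_{n_k}(L^\infty(\mu))$. The paper obtains the upper-triangular form by quoting the Deckard--Pearcy theorem (a measurable Schur decomposition in $M_n(L^\infty(\mu))$), and you need something of that strength here; there is no short circuit through the idempotent lemmas.

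Second, and more seriously, your argument for item~(3) is incorrect as stated. You claim that if $A^{k}_{j,j+1}$ vanishes on $\Gamma$ with $\mu(\Gamma)>0$, then $\chi_\Gamma$ times the projection onto the first $j$ coordinates commutes with $A_k$. This is false: vanishing of the single super-diagonal entry $A^k_{j,j+1}$ does not by itself make $A_k$ split at position $j$, because the entries $A^k_{i,j+1}$ for $i<j$ (and further to the right) may be nonzero. Already in $M_3(\mathbb{C})$, the matrix with $(1,2)$ and $(1,3)$ entries equal to $1$ and $(2,3)$ entry equal to $0$ does not commute with $\mathrm{diag}(1,1,0)$. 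The paper's Lemma~$2.10$ does substantially more work here: it uses the already-established equality of diagonal entries (Lemma~$2.9$) together with the invertibility of the \emph{other} $1$-diagonal entries on a smaller set of positive measure to perform an explicit sequence of similarity transformations that genuinely block-diagonalizes $A_k$ there, only then producing the forbidden idempotent. Your phrase ``peeling off any zero set into a central summand'' does not capture this: the obstruction is not about central cuts but about clearing the off-diagonal entries above the zero $1$-diagonal entry, and that clearing step is where the inductive construction of $S_r, S_{r+1},\ldots$ in Lemma~$2.10$ is essential.
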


We prove Proposition $2.8$ by the following two lemmas.

\begin{lemma}
Let $A^{}_{{}}$ in $M^{}_{n^{}_{}}(L^{\infty}(\mu))$ be of the upper
triangular form
$$A^{}_{}=\begin{pmatrix}
f^{}_{11}&f^{}_{12}&\cdots&f^{}_{1n}\\
0&f^{}_{22}&\cdots&f^{}_{2n}\\
\vdots&\vdots&\ddots&\vdots\\
0&0&\cdots&f^{}_{nn}\\
\end{pmatrix}
\begin{matrix}
L^{2}(\mu)\\
L^{2}(\mu)\\
\vdots\\
L^{2}(\mu)\\
\end{matrix}. \eqno{(2.32)}$$
If a finite frame of $\{A^{}_{{}}\}^{\prime}_{}\cap
M^{}_{n^{}_{}}(L^{\infty}(\mu))$ only contains the identity of
$M^{}_{n^{}_{}}(L^{\infty}(\mu))$, then the main diagonal entries of
$A$ are the same.
\end{lemma}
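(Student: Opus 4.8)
The plan is to argue by contradiction. Suppose the diagonal entries of $A$ are not all equal, say $f^{}_{pp}\neq f^{}_{qq}$ on a Borel set of positive $\mu$-measure, with $p<q$ chosen so that $q-p$ is as small as possible among all such pairs. The key idea is that a genuine difference among diagonal entries forces a nontrivial idempotent into $\{A\}^{\prime}\cap M^{}_{n}(L^{\infty}(\mu))$ that is not a central projection, contradicting the hypothesis that the only finite frame is $\{I\}$ (equivalently, by Proposition $2.5$, that $\{I\}$ and $\mathscr{E}^{}_{n}$ generate a maximal abelian set of idempotents, so there are no noncentral idempotents in the commutant at all).

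First I would reduce to the case where the diagonal entries are actually (essentially) constant, or at least piecewise constant, by the same localization device used just before Proposition $2.8$: partition $\Lambda$ into finitely many Borel pieces on which the pointwise spectral data of $A(\lambda)$ is combinatorially stable, and work on one piece at a time; a finite frame restricted to a central summand is still a finite frame by item $(4)$ of Definition $2.4$. On such a piece, pointwise for a.e.\ $\lambda$ the matrix $A(\lambda)$ is an upper-triangular matrix in $M^{}_{n}(\mathbb{C})$ whose diagonal is not constant, so its spectrum is not a single point. Then I would construct, pointwise and measurably, the Riesz idempotent $P(\lambda)$ corresponding to splitting the spectrum of $A(\lambda)$ into two nonempty clopen pieces: explicitly $P(\lambda)=\frac{1}{2\pi i}\oint_{\gamma}(zI-A(\lambda))^{-1}\,dz$ for a suitable contour $\gamma$ separating, say, the eigenvalue $f^{}_{11}(\lambda)$-block from the rest. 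Because $A$ is upper triangular with diagonal entries in $L^{\infty}(\mu)$, the resolvent entries are rational functions of the $f^{}_{ij}(\lambda)$ with denominators bounded away from $0$ on the relevant piece, so $P=(P(\lambda))$ lies in $M^{}_{n}(L^{\infty}(\mu))$, is a nonzero idempotent, commutes with $A$, and is not of the form $(2.26)$ since $0<\mathrm{Tr}(P)(\lambda)<n$ there. This contradicts the assumption that $\{A\}^{\prime}\cap M^{}_{n}(L^{\infty}(\mu))$ contains no finite frame other than $\{I\}$, because $P$ together with $I-P$ could be refined to a finite frame strictly finer than $\{I\}$ (or, more directly, $P$ obstructs $\{I\}\cup\mathscr{E}^{}_{n}$ from being maximal abelian among idempotents).

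The main obstacle is the measurability and boundedness of the pointwise construction: one must choose the separating contour $\gamma$ in a way that works simultaneously (on a fixed Borel piece) for a.e.\ $\lambda$ and yields a uniformly bounded family $P(\lambda)$, i.e.\ one needs uniform control on how close the two spectral clusters of $A(\lambda)$ get to each other and to $\gamma$. This is handled by the preliminary partition of $\Lambda$: on each piece the number of distinct diagonal values and the sets on which coincidences occur are fixed, so after further subdividing one may assume $f^{}_{11}$ is bounded away from the other diagonal entries wherever it differs from them, giving a fixed annular contour. An alternative that sidesteps contour integrals entirely: use Lemma $2.1$/Lemma $2.3$-style Gaussian elimination to conjugate $A$ into block form $\begin{pmatrix} f^{}_{11} & * \\ 0 & B\end{pmatrix}$ and then solve the Sylvester-type equation $f^{}_{11}R - RB = *$ for $R\in M^{}_{1,n-1}(L^{\infty}(\mu))$ (solvable precisely because $f^{}_{11}-f^{}_{jj}$ is invertible on the piece where the splitting is nontrivial), producing the idempotent $\begin{pmatrix} 1 & R \\ 0 & 0\end{pmatrix}$ in the commutant after conjugation; this is the more self-contained route and the one I would write up, citing the algorithm already developed in Lemmas $2.1$–$2.3$.
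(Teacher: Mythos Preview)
Your overall strategy and the Riesz-idempotent route (a) are correct; once you have localized to a Borel piece where each pair $(f_{ii},f_{jj})$ is either equal a.e.\ or bounded apart by a fixed $\epsilon>0$, the contour-integral projection onto the generalized $f_{11}(\lambda)$-eigenspace is uniformly bounded and measurable, has rank strictly between $0$ and $n$, and gives the contradiction you want. This is a genuinely different argument from the paper's.

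The paper instead uses essentially your approach (b), but with one step you omit. Your version splits $A=\begin{pmatrix} f_{11} & * \\ 0 & B\end{pmatrix}$ and solves $f_{11}R-RB=*$, claiming solvability because ``$f_{11}-f_{jj}$ is invertible on the piece.'' But on the localized piece some of the $f_{jj}$ with $j\geq 2$ may still coincide with $f_{11}$, so $B$ has $f_{11}$ on its diagonal and the Sylvester equation is not solvable in $L^{\infty}$. The paper handles this by an extra similarity step before solving Sylvester: whenever adjacent diagonal entries differ by at least $\epsilon$ one can zero the superdiagonal entry between them and then swap them (see (2.33)--(2.35)), and iterating this produces an upper-triangular $A_{1}$ whose first $r$ diagonal entries equal $f_{11}$ and whose remaining $n-r$ are bounded away from it. Only then is the (now $r\times(n-r)$) Sylvester system solved column by column as in (2.39)--(2.44), yielding the idempotent $\begin{pmatrix} I_{r} & R \\ 0 & 0\end{pmatrix}$. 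Your citation of Lemmas 2.1/2.3 does not supply this step, since those lemmas manipulate idempotents rather than $A$; also, $A$ is already in the $1\times(n-1)$ block form, so no ``Gaussian elimination'' is needed there. The initial choice of $p<q$ with $q-p$ minimal plays no role in either argument and can be dropped.

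In short: route (a) is complete and cleaner than the paper's; route (b) is the paper's method but needs the diagonal-rearrangement step (2.33)--(2.35) to close the gap.
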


\begin{proof}
If it is not true, then there exist an $\epsilon>0$ and a Borel
subset $\Lambda^{}_{\epsilon}$ of $\Lambda$ with
$\mu(\Lambda^{}_{\epsilon})>0$ such that either the inequality
$|f^{}_{ii}(\lambda)-f^{}_{11}(\lambda)|\geq\epsilon$ holds for
every $\lambda$ in $\Lambda^{}_{\epsilon}$ or the equality
$f^{}_{ii}(\lambda)=f^{}_{11}(\lambda)$ holds for every $\lambda$ in
$\Lambda^{}_{\epsilon}$ where $i=1,2,\ldots,n$. We can choose Borel
representatives to fulfill the relations for every $\lambda$ in
$\Lambda^{}_{\epsilon}$.

We construct a nontrivial idempotent in $\{A^{}_{}\}^{\prime}\cap
M^{}_{n}(L^{\infty}(\mu))$ to draw a contradiction. For this
purpose, we switch entries in the main diagonal of $A$ by similar
transformations to simplify the computation. In the discussion that
follows, we restrict $A^{}_{}$ in
$M^{}_{n}(L^{\infty}(\Lambda^{}_{\epsilon},\mu))$.

If $f^{}_{k+1,k+1}=f^{}_{11}$ on $\Lambda^{}_{\epsilon}$ and
$|f^{}_{kk}(\lambda)-f^{}_{11}(\lambda)|\geq\epsilon$ holds for
every $\lambda$ in $\Lambda^{}_{\epsilon}$, then we obtain the
equality for $\phi=f^{}_{k,k+1}/(f^{}_{kk}-f^{}_{k+1,k+1})$:
$$\begin{pmatrix}
1&\phi\\
0&1\\
\end{pmatrix}
\begin{pmatrix}
f^{}_{kk}&f^{}_{k,k+1}\\
0&f^{}_{k+1,k+1}\\
\end{pmatrix}=
\begin{pmatrix}
f^{}_{kk}&0\\
0&f^{}_{k+1,k+1}\\
\end{pmatrix}
\begin{pmatrix}
1&\phi\\
0&1\\
\end{pmatrix}.
\eqno{(2.33)}$$ Then we can switch the two main diagonal entries of
$A$ by a similarity transformation:
$$\begin{pmatrix}
0&1\\
1&0\\
\end{pmatrix}
\begin{pmatrix}
f^{}_{kk}&0\\
0&f^{}_{k+1,k+1}\\
\end{pmatrix}
\begin{pmatrix}
0&1\\
1&0\\
\end{pmatrix}=
\begin{pmatrix}
f^{}_{k+1,k+1}&0\\
0&f^{}_{kk}\\
\end{pmatrix}. \eqno{(2.34)}$$
We construct an invertible operator
$T=I^{}_{M^{}_{k-1}(L^{\infty}(\Lambda^{}_{\epsilon},\mu))}\oplus
S^{}_{2}\oplus
I^{}_{M^{}_{n-k-1}(L^{\infty}(\Lambda^{}_{\epsilon},\mu))}$ in
$M^{}_{n}(L^{\infty}(\Lambda^{}_{\epsilon},\mu))$, where
$I^{}_{M^{}_{k}(L^{\infty}(\Lambda^{}_{\epsilon},\mu))}$ is the
identity of $M^{}_{k}(L^{\infty}(\Lambda^{}_{\epsilon},\mu))$ and
$S^{}_{2}$ in $M^{}_{2}(L^{\infty}(\Lambda^{}_{\epsilon},\mu))$ is
of the form
$$S^{}_{2}=\begin{pmatrix}
0&1\\
1&0\\
\end{pmatrix}
\begin{pmatrix}
1&f^{}_{k,k+1}/(f^{}_{kk}-f^{}_{k+1,k+1})\\
0&1\\
\end{pmatrix}.  \eqno{(2.35)}$$
By the construction of $T$, we obtain that $TA^{}_{}T^{-1}$ is of
the upper triangular form and the $(k,k)$ and $(k+1,k+1)$ entries of
$A^{}_{}$ are switched to the $(k+1,k+1)$ and $(k,k)$ entries of
$TA^{}_{}T^{-1}$ respectively. Iterating this construction and
making the related similar transformation to $A^{}_{}$ one by one
for finite steps, we obtain an upper triangular matrix $A^{}_{1}$ in
$M^{}_{n}(L^{\infty}(\Lambda^{}_{\epsilon},\mu))$ of the form
$$A^{}_{1}=\begin{pmatrix}
h^{}_{11}&h^{}_{12}&\cdots&h^{}_{1n}\\
0&h^{}_{22}&\cdots&h^{}_{2n}\\
\vdots&\vdots&\ddots&\vdots\\
0&0&\cdots&h^{}_{nn}\\
\end{pmatrix}
\begin{matrix}
L^{2}(\Lambda^{}_{\epsilon},\mu)\\
L^{2}(\Lambda^{}_{\epsilon},\mu)\\
\vdots\\
L^{2}(\Lambda^{}_{\epsilon},\mu)\\
\end{matrix} \eqno{(2.36)}$$ such that the first $r$ main diagonal
entries of $A^{}_{1}$ are equal to $f^{}_{11}$ on
$\Lambda^{}_{\epsilon}$ and the equality
$|h^{}_{kk}(\lambda)-h^{}_{11}(\lambda)|\geq\epsilon$ holds for
every $\lambda$ in $\Lambda^{}_{\epsilon}$ for $k=r+1,r+2,\ldots,n$.

With the preceding preparation, we construct a nontrivial idempotent
$P$ in $\{A^{}_{1}\}^{\prime}\cap
M^{}_{n}(L^{\infty}(\Lambda^{}_{\epsilon},\mu))$ of the form
$$P=\begin{pmatrix}
I^{}_{M^{}_{r}(L^{\infty}(\Lambda^{}_{\epsilon},\mu))}&R\\
0&0\\
\end{pmatrix}. \eqno{(2.37)}$$ It is sufficient to ensure the
existence of $R$ in $M^{}_{r\times
(n-r)}(L^{\infty}(\Lambda^{}_{\epsilon},\mu))$. Let $R$ be of the
form
$$R=\begin{pmatrix}
\phi^{}_{1,r+1}&\phi^{}_{1,r+2}&\cdots&\phi^{}_{1n}\\
\phi^{}_{2,r+1}&\phi^{}_{2,r+2}&\cdots&\phi^{}_{2n}\\
\vdots&\vdots&\ddots&\vdots\\
\phi^{}_{r,r+1}&\phi^{}_{r,r+2}&\cdots&\phi^{}_{rn}\\
\end{pmatrix}^{}_{r\times
(n-r)}
\begin{matrix}
L^{2}(\Lambda^{}_{\epsilon},\mu)\\
L^{2}(\Lambda^{}_{\epsilon},\mu)\\
\vdots\\
L^{2}(\Lambda^{}_{\epsilon},\mu)\\
\end{matrix}. \eqno{(2.38)}$$ If the equality $A^{}_{1}P=PA^{}_{1}$
holds for the $(r,r+1)$ entry of $A^{}_{1}P$, we obtain the equality
$$h^{}_{rr}\phi^{}_{r,r+1}=h^{}_{r,r+1}+\phi^{}_{r,r+1}h^{}_{r+1,r+1}.
\eqno{(2.39)}$$ Thus we obtain
$$\phi^{}_{r,r+1}=h^{}_{r,r+1}/(h^{}_{rr}-h^{}_{r+1,r+1}).
\eqno{(2.40)}$$ For the $(r-1,r+1)$ entry of $A^{}_{1}P$, we obtain
the equality
$$h^{}_{r-1,r-1}\phi^{}_{r-1,r+1}+h^{}_{r-1,r}\phi^{}_{r,r+1}
=h^{}_{r-1,r+1}+\phi^{}_{r-1,r+1}h^{}_{r+1,r+1}. \eqno{(2.41)}$$
Thus we obtain
$$\phi^{}_{r-1,r+1}
=(h^{}_{r-1,r+1}-h^{}_{r-1,r}\phi^{}_{r,r+1})/(h^{}_{r-1,r-1}-h^{}_{r+1,r+1}).
\eqno{(2.42)}$$ In this way, we obtain $\phi^{}_{k,r+1}$ one by one
for $k=r,r-1,\ldots,1$. For the $(r,r+2)$ entry of $A^{}_{1}P$, we
obtain the equality
$$h^{}_{rr}\phi^{}_{r,r+2}
=h^{}_{r,r+2}+\phi^{}_{r,r+1}h^{}_{r+1,r+2}+\phi^{}_{r,r+2}h^{}_{r+2,r+2}.
\eqno{(2.43)}$$ Thus we obtain
$$\phi^{}_{r,r+2}
=(h^{}_{r,r+2}+\phi^{}_{r,r+1}h^{}_{r+1,r+2})/(h^{}_{rr}-h^{}_{r+2,r+2}).
\eqno{(2.44)}$$ In this way and by entries in column $(r+1)$ of $P$,
we obtain $\phi^{}_{k,r+2}$ one by one for $k=r,r-1,\ldots,1$.
Similarly, we obtain the left columns of $P$ one after another. And
this $P$ does exist in $\{A^{}_{1}\}^{\prime}\cap
M^{}_{n}(L^{\infty}(\Lambda^{}_{\epsilon},\mu))$. Notice that
$n>\mbox{Tr}(P^{}_{})(\lambda)=r^{}_{}>0$ (a constant) on
$\Lambda^{}_{\epsilon}$ and $A^{}_{1}$ is similar to $A$ in
$M^{}_{n}(L^{\infty}(\Lambda^{}_{\epsilon},\mu))$. Therefore there
exists an idempotent $Q$ in $\{A^{}_{}\}^{\prime}\cap
M^{}_{n}(L^{\infty}(\Lambda^{}_{\epsilon},\mu))$ such that
$n>\mbox{Tr}(Q^{}_{})(\lambda)=r^{}_{}>0$ (a constant) on
$\Lambda^{}_{\epsilon}$. Let $\mbox{Tr}(Q^{}_{})(\lambda)=n^{}_{}$
on $\Lambda\backslash\Lambda^{}_{\epsilon}$. Then this is a
contradiction between the existence of $Q$ and the assumption that
the frame only contains the identity of $M^{}_{n}(L^{\infty}(\mu))$.
Hence the equality $f^{}_{11}=f^{}_{ii}$ holds for $i=2,\ldots,n$.
\end{proof}

\begin{lemma}
Let $A^{}_{{}}$ in $M^{}_{n^{}_{}}(L^{\infty}(\mu))$ be of the upper
triangular form as in ${(2.32)}$. If a finite frame of
$\{A^{}_{{}}\}^{\prime}_{}\cap M^{}_{n^{}_{}}(L^{\infty}(\mu))$ only
contains the identity of $M^{}_{n^{}_{}}(L^{\infty}(\mu))$, then the
support of $f^{}_{i,i+1}$ is $\Lambda$ for $i=1,2,\ldots,n-1$.
\end{lemma}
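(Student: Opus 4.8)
The plan is a proof by contradiction of the same shape as the proof of Lemma $2.9$: I produce an idempotent in $\{A\}^{\prime}\cap M^{}_{n}(L^{\infty}(\mu))$ incompatible with the frame being trivial. Suppose the conclusion fails for some $i$. Then, fixing Borel representatives, there is a Borel set $\Lambda^{}_{0}\subseteq\Lambda$ with $\mu(\Lambda^{}_{0})>0$ on which $f^{}_{i,i+1}$ vanishes identically. By Lemma $2.9$ the main diagonal entries of $A$ coincide, say $f^{}_{11}=\cdots=f^{}_{nn}=f$, so after restricting everything to $M^{}_{n}(L^{\infty}(\Lambda^{}_{0},\mu))$ we may write $A=fI+N$ with $N$ strictly upper triangular; since $fI$ is central, $\{A\}^{\prime}\cap M^{}_{n}(L^{\infty}(\Lambda^{}_{0},\mu))=\{N\}^{\prime}\cap M^{}_{n}(L^{\infty}(\Lambda^{}_{0},\mu))$. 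Thus it suffices to exhibit a nontrivial idempotent in $\{N\}^{\prime}\cap M^{}_{n}(L^{\infty}(\Lambda^{}_{1},\mu))$ for some Borel $\Lambda^{}_{1}\subseteq\Lambda^{}_{0}$ of positive measure, whose trace is a constant strictly between $0$ and $n$; extending it by the identity on $\Lambda\setminus\Lambda^{}_{1}$ will then give the contradiction.

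The key observation is that $N^{n-1}=0$ on $\Lambda^{}_{0}$: strict upper triangularity forces the only possibly nonzero entry of $N^{n-1}$ to be its $(1,n)$ entry, which equals $f^{}_{12}f^{}_{23}\cdots f^{}_{n-1,n}$, and this product has the factor $f^{}_{i,i+1}\equiv 0$ on $\Lambda^{}_{0}$. Hence for almost every $\lambda\in\Lambda^{}_{0}$ the nilpotent matrix $N(\lambda)$ satisfies $N(\lambda)^{n-1}=0$, so it is not similar to the single $n\times n$ Jordan block, and its Jordan form therefore has at least two blocks. Each of the finitely many functions $\lambda\mapsto\mathrm{rank}(N(\lambda)^{j})$, $1\le j\le n-1$, is Borel with values in $\{0,\dots,n\}$, so $\Lambda^{}_{0}$ splits into finitely many Borel pieces on which all these ranks, hence the Jordan type of $N(\lambda)$, are constant; I take $\Lambda^{}_{1}$ to be such a piece of positive measure. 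On $\Lambda^{}_{1}$, $N$ has a constant Jordan type with at least two blocks; let $m$ denote the size of the largest block, so $1\le m\le n-1$.

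On $\Lambda^{}_{1}$ I build the idempotent by the elementary, $\mu$-measurable bookkeeping already used in Lemmas $2.1$--$2.3$. After a further finite Borel partition, on each piece one of the standard basis vectors $e^{}_{j}$ lies outside $\ker N^{m-1}$ and the relevant pivots are bounded away from $0$, in the manner of $(2.20)$--$(2.21)$; putting $v=e^{}_{j}$, the chain $\{v,Nv,\dots,N^{m-1}v\}$ spans an $N$-invariant free submodule $\mathscr{V}$ of rank $m$, and the row and column operations of Lemmas $2.1$ and $2.2$ carry $N$ into the block form $\bigl(\begin{smallmatrix}J^{}_{m}&\ast\\0&\ast\end{smallmatrix}\bigr)$ with all denominators controlled by the partition. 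Splitting off an $N$-invariant complement of $\mathscr{V}$ — which exists pointwise because the Jordan type is constant, and can be chosen with entries in $L^{\infty}$ by the same device — produces a bounded idempotent $P^{}_{1}\in\{N\}^{\prime}\cap M^{}_{n}(L^{\infty}(\Lambda^{}_{1},\mu))$ with $\mathrm{Tr}(P^{}_{1})\equiv m$. Finally, extending $P^{}_{1}$ by the identity on $\Lambda\setminus\Lambda^{}_{1}$ yields an idempotent $Q\in\{A\}^{\prime}\cap M^{}_{n}(L^{\infty}(\mu))$ with $Q\neq I$ and $\mathrm{Tr}(Q)$ supported on all of $\Lambda$; exactly as at the end of the proof of Lemma $2.9$, the existence of such a $Q$ contradicts the hypothesis that a finite frame of $\{A\}^{\prime}\cap M^{}_{n}(L^{\infty}(\mu))$ contains only the identity.

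I expect the main obstacle to be not the pointwise linear algebra — the invariant splitting exists as soon as the Jordan type is constant with at least two blocks, this being the content of $N^{n-1}=0$ — but carrying the splitting out $\mu$-measurably so that $P^{}_{1}$, and the intermediate similarities, genuinely lie in $M^{}_{n}(L^{\infty})$. This is precisely where one must imitate the refinement in $(2.20)$--$(2.21)$: one subdivides $\Lambda^{}_{1}$ into finitely many Borel pieces on which the relevant entries and minors of the powers of $N$ are bounded below in modulus, so that every division performed by the algorithm of Lemmas $2.1$--$2.3$ stays in $L^{\infty}$.
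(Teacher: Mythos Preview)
Your proof is correct and takes a genuinely different, more conceptual route than the paper's. The paper proceeds by an explicit algorithmic reduction: it picks the smallest index $r$ at which $f^{}_{r-1,r}$ vanishes on a set of positive measure (so that $f^{}_{i,i+1}$ is bounded below there for $i<r-1$), and then constructs by hand a sequence of upper-triangular similarities (equations $(2.46)$--$(2.56)$) that progressively block-diagonalize $A$ on a shrinking sequence of positive-measure subsets, one column at a time; the nontrivial idempotent then comes from the resulting block decomposition. You instead make the single clean observation that if \emph{any} superdiagonal entry vanishes on $\Lambda^{}_{0}$ then $N^{n-1}=0$ there, so pointwise $N(\lambda)$ cannot be a single Jordan block; after stratifying by the Borel rank functions you reduce to producing a Jordan-type splitting measurably with $L^{\infty}$ data, invoking the pivot-partition device of $(2.20)$--$(2.21)$ and Lemmas $2.1$--$2.3$. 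Your argument is shorter and more transparent, and the nilpotency trick $N^{n-1}=0$ is a nice shortcut the paper does not use. The trade-off is that your approach leans harder on the measurable-selection bookkeeping for the invariant complement, which you correctly identify as the real work and sketch rather than write out; the paper's longer computation avoids this entirely because every similarity is exhibited explicitly, so boundedness is visible at each step without appeal to a general device.
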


\begin{proof}
If the lemma is not true, then there exist a positive integer $r$ in
$\{2,\ldots,n\}$ and a Borel subset $\Lambda^{}_{0}$ of $\Lambda$
with $\mu(\Lambda^{}_{0})>0$ such that:
\begin{enumerate}
\item $f^{}_{r-1,r}=0$ on $\Lambda^{}_{0}$;
\item the support of $f^{}_{i,i+1}$ contains $\Lambda^{}_{0}$ for
$i=1,\ldots,r-2$ if $r\geq 3$.
\end{enumerate}
Without loss of generality, we assume that:
\begin{enumerate}
\item[($1^{\prime}$)] $f^{}_{r-1,r}=0$ on $\Lambda^{}_{}$;
\item[($2^{\prime}$)] there exists an $\epsilon>0$ such that the inequality
$$|f^{}_{i,i+1}(\lambda)|\geq\epsilon \eqno{(2.45)}$$ holds for every $\lambda$ in
$\Lambda^{}_{}$ and $i=1,\ldots,r-2$ if $r\geq 3$.
\end{enumerate}

In the following, we show that $A$ is similar to a block diagonal
matrix in $M^{}_{n^{}_{}}(L^{\infty}(\mu))$ by reduction. Then we
obtain a contradiction.

By the preceding lemma and the assumption in $(2.45)$, we construct
an invertible operator $S^{\prime}_{r}$ in
$M^{}_{r}(L^{\infty}(\mu))$ such that the equality
$$\begin{pmatrix}
f^{}_{11}&\cdots&f^{}_{1,r-1}&f^{}_{1r}\\
\vdots&\ddots&\vdots&\vdots\\
0&\cdots&f^{}_{r-1,r-1}&0\\
0&\cdots&0&f^{}_{rr}\\
\end{pmatrix}S^{\prime}_{r}
=S^{\prime}_{r}\begin{pmatrix}
f^{}_{11}&\cdots&f^{}_{1,r-1}&0\\
\vdots&\ddots&\vdots&\vdots\\
0&\cdots&f^{}_{r-1,r-1}&0\\
0&\cdots&0&f^{}_{rr}\\
\end{pmatrix} \eqno{(2.46)}$$ holds.
Let $S^{\prime}_{r}$ be of the form
$$S^{\prime}_{r}=\begin{pmatrix}
1&0&\cdots&0&\phi^{}_{1r}\\
0&1&\cdots&0&\phi^{}_{2r}\\
\vdots&\vdots&\ddots&\vdots&\vdots\\
0&0&\cdots&1&\phi^{}_{r-1,r}\\
0&0&\cdots&0&1\\
\end{pmatrix}^{}_{r\times r}.  \eqno{(2.47)}$$

Note that $f^{}_{11}=f^{}_{jj}$ for $j=2,\ldots,n$. With respect to
the $(r-2,r)$ entries of the products on both sides of the equality
in $(2.46)$, the equality
$$f^{}_{r-2,r-2}\phi^{}_{r-2,r}+f^{}_{r-2,r-1}\phi^{}_{r-1,r}+f^{}_{r-2,r}=
\phi^{}_{r-2,r}f^{}_{rr} \eqno{(2.48)}$$ yields that
$\phi^{}_{r-1,r}=-f^{}_{r-2,r}\slash f^{}_{r-2,r-1}$. Thus with
respect to the $(r-3,r)$ entries of the products on both sides of
the equality in $(2.46)$, the equality
$$f^{}_{r-3,r-3}\phi^{}_{r-3,r}+f^{}_{r-3,r-2}\phi^{}_{r-2,r}+f^{}_{r-3,r-1}\phi^{}_{r-1,r}+f^{}_{r-3,r}=
\phi^{}_{r-3,r}f^{}_{rr} \eqno{(2.49)}$$ yields that
$\phi^{}_{r-2,r}=-(f^{}_{r-3,r-1}\phi^{}_{r-1,r}+f^{}_{r-3,r})\slash
f^{}_{r-3,r-2}$. In this way, we obtain $\phi^{}_{ir}$ for
$i=r-1,r-2,\ldots,2$. We can choose $0$ to be $\phi^{}_{1r}$. The
existence of $S^{\prime}_{r}$ is proved.

Note that $S^{}_{r}=S^{\prime}_{r}\oplus
I^{}_{M^{}_{n-r}(L^{\infty}(\mu))}$ is invertible in
$M^{}_{n}(L^{\infty}(\mu))$. The first $r$ rows and $r$ columns of
$S^{}_{r}A^{}_{}S^{-1}_{r}$ form an $r$-by-$r$ square matrix which
is block diagonal. Denote by $A^{\prime}_{r+1}$ the
$(r+1)$-by-$(r+1)$ square matrix formed by the first $(r+1)$ rows
and $(r+1)$ columns of $S^{}_{r}A^{}_{}S^{-1}_{r}$. Thus
$A^{\prime}_{r+1}$ is of the block form
$$A^{\prime}_{r+1}=\begin{pmatrix}
J^{}_{{1}}&0&R^{}_{{1}}\\
0&J^{}_{{2}}&R^{}_{{2}}\\
0&0&f^{}_{r+1,r+1}\\
\end{pmatrix} \eqno{(2.50)}$$ where $J^{}_{{1}}$ is the
$(r-1)$-by-$(r-1)$ square matrix formed by the first $(r-1)$ rows
and $(r-1)$ columns of $S^{}_{r}A^{}_{}S^{-1}_{r}$ and
$J^{}_{{2}}=f^{}_{rr}$. We show that there exists a Borel subset
$\Lambda^{}_{1}$ of $\Lambda$ with $\mu(\Lambda^{}_{1})>0$ such that
$A^{\prime}_{r+1}$ is similar to a block diagonal form restricted in
$M^{}_{r+1}(L^{\infty}(\Lambda^{}_{1},\mu))$.

If $R^{}_{2}=0$ on $\Lambda$, then $A^{\prime}_{r+1}$ is unitarily
equivalent to the block form
$$A^{\prime}_{r+1}\cong\begin{pmatrix}
J^{}_{{2}}&0&0\\
0&J^{}_{{1}}&R^{}_{{1}}\\
0&0&f^{}_{r+1,r+1}\\
\end{pmatrix}. \eqno{(2.51)}$$ Thus if the $(r-1,r+1)$ entry
$f^{\prime}_{r-1,r+1}$ of $S^{}_{r}A^{}_{}S^{-1}_{r}$ is $0$, then a
similar computation as for $S^{\prime}_{r}$ in $(2.46)$ and $(2.47)$
yields that the operator $A^{\prime}_{r+1}$ is similar to a block
diagonal matrix of the form
$$A^{\prime}_{r+1}\sim\begin{pmatrix}
J^{}_{{2}}&0&0\\
0&J^{}_{{1}}&0\\
0&0&f^{}_{r+1,r+1}\\
\end{pmatrix}. \eqno{(2.52)}$$ If $f^{\prime}_{r-1,r+1}\neq 0$, then there
exists a Borel subset $\Lambda^{}_{1}$ of $\Lambda$ with
$\mu(\Lambda^{}_{1})>0$ such that the inequality
$|f^{\prime}_{r-1,r+1}(\lambda)|\geq\epsilon^{}_{1}$ holds for some
$\epsilon^{}_{1}>0$ and every $\lambda$ in $\Lambda^{}_{1}$. Thus
let $J^{\prime}_{1}$ be the operator restricted in
$M^{}_{r}(L^{\infty}(\Lambda^{}_{1},\mu))$ of the form
$$J^{\prime}_{1}=\begin{pmatrix}
J^{}_{{1}}&R^{}_{{1}}\\
0&f^{}_{r+1,r+1}\\
\end{pmatrix}. \eqno{(2.53)}$$ Therefore $J^{\prime}_{1}$ and
$J^{}_{2}$ restricted in $M^{}_{r}(L^{\infty}(\Lambda^{}_{1},\mu))$
and $L^{\infty}(\Lambda^{}_{1},\mu)$ respectively are the diagonal
blocks we need.

On the other hand, if $R^{}_{2}\neq 0$ on $\Lambda$, then there
exists a Borel subset $\Lambda^{}_{1}$ of $\Lambda$ with
$\mu(\Lambda^{}_{1})>0$ such that the inequality
$|R^{}_{2}(\lambda)|\geq\epsilon^{}_{1}$ holds for some
$\epsilon^{}_{1}>0$ and every $\lambda$ in $\Lambda^{}_{1}$. We need
to consider two subcases. In one subcase, the $(r-1,r+1)$ entry
$f^{\prime}_{r-1,r+1}$ of $S^{}_{r}A^{}_{}S^{-1}_{r}$ is $0$ on
$\Lambda^{}_{1}$, then a computation as for $S^{\prime}_{r}$ in
$(2.46)$ and $(2.47)$ yields that $A^{\prime}_{r+1}$ is similar to a
block diagonal form restricted in
$M^{}_{r+1}(L^{\infty}(\Lambda^{}_{1},\mu))$
$$A^{\prime}_{r+1}\sim\begin{pmatrix}
J^{}_{{1}}&0&0\\
0&J^{}_{{2}}&R^{}_{2}\\
0&0&f^{}_{r+1,r+1}\\
\end{pmatrix}. \eqno{(2.54)}$$
Let $J^{\prime}_{2}$ be the operator restricted in
$M^{}_{2}(L^{\infty}(\Lambda^{}_{1},\mu))$ of the form
$$J^{\prime}_{2}=\begin{pmatrix}
J^{}_{{2}}&R^{}_{{2}}\\
0&f^{}_{r+1,r+1}\\
\end{pmatrix}. \eqno{(2.55)}$$ Therefore $J^{}_{1}$ and
$J^{\prime}_{2}$ restricted in
$M^{}_{r-1}(L^{\infty}(\Lambda^{}_{1},\mu))$ and
$M^{}_{2}(L^{\infty}(\Lambda^{}_{1},\mu))$ respectively are the
diagonal blocks we need.

In the other subcase, without loss of generality, we assume that the
inequality $|f^{\prime}_{r-1,r+1}(\lambda)|\geq \epsilon^{}_{2}$
holds a.e.~$[\mu]$ on $\Lambda^{}_{1}$ for the $(r-1,r+1)$ entry
$f^{\prime}_{r-1,r+1}$ of $S^{}_{r}A^{}_{}S^{-1}_{r}$. If $r=2$,
then $A^{\prime}_{r+1}$ is similar to a block diagonal form as in
$(2.54)$. If $r>2$, then $A^{\prime}_{r+1}$ is similar to a block
diagonal form as in $(2.51)$.

Let $S^{\prime}_{r+1}$ be the invertible operator in
$M^{}_{r+1}(L^{\infty}(\Lambda^{}_{1},\mu))$ such that
$S^{\prime}_{r+1}A^{\prime}_{r+1}S^{\prime-1}_{r+1}$ is of the block
diagonal form as discussed in the preceding paragraphs. The operator
$S^{}_{r+1}=S^{\prime}_{r+1}\oplus
I^{}_{M^{}_{n-r-1}(L^{\infty}(\Lambda^{}_{1},\mu))}$ is invertible
in $M^{}_{n}(L^{\infty}(\Lambda^{}_{1},\mu))$ and we write
$A^{}_{r+1}=S^{}_{r+1}S^{}_{r}A^{}_{}S^{-1}_{r}S^{-1}_{r+1}$. The
first $(r+1)$ rows and $(r+1)$ columns of $A^{}_{r+1}$ form an
$(r+1)$-by-$(r+1)$ square matrix which is block diagonal.

Denote by $A^{\prime}_{r+2}$ the $(r+2)$-by-$(r+2)$ square matrix
formed by the first $(r+2)$ rows and $(r+2)$ columns of
$A^{}_{r+1}$. Iterating the preceding discussion, there exists a
Borel subset $\Lambda^{}_{2}$ of $\Lambda^{}_{1}$ with
$\mu(\Lambda^{}_{2})>0$ and an invertible operator $S^{}_{r+2}$ in
$M^{}_{n}(L^{\infty}(\Lambda^{}_{2},\mu))$ such that the
$(r+2)$-by-$(r+2)$ square matrix formed by the first $(r+2)$ rows
and $(r+2)$ columns of $S^{}_{r+2}A^{}_{r+1}S^{-1}_{r+2}$ is of the
block diagonal form. Iterating this construction for finite steps,
we obtain a Borel subset $\Lambda^{}_{m}$ of $\Lambda^{}_{}$ with
$\mu(\Lambda^{}_{m})>0$ and an invertible operator $S^{}_{m}$ in
$M^{}_{n}(L^{\infty}(\Lambda^{}_{m},\mu))$ such that
$S^{}_{m}A^{}_{}S^{-1}_{m}$ is of the block diagonal form
$$S^{}_{m}A^{}_{0}S^{-1}_{m}=\begin{pmatrix}
J^{}_{r^{}_{1}}&0&\cdots&0\\
0&J^{}_{r^{}_{2}}&\cdots&0\\
\vdots&\vdots&\ddots&\vdots\\
0&0&\cdots&J^{}_{r^{}_{k}}\\
\end{pmatrix}, \eqno{(2.56)}$$ where $J^{}_{r^{}_{i}}$ in
$M^{}_{r^{}_{i}}(L^{\infty}(\Lambda^{}_{m},\mu))$ for
$i=1,2,\ldots,k$. By the construction of
$S^{}_{m}A^{}_{}S^{-1}_{m}$, we obtain that $1\leq r^{}_{1}<n$.
Therefore there exists a nontrivial idempotent $P$ in the relative
commutant $\{A^{}_{}\}^{\prime}\cap M^{}_{n}(L^{\infty}(\mu))$ such
that $0<\mathrm{Tr}(P)(\lambda)<n$ for every $\lambda^{}_{}$ in
$\Lambda^{}_{m}$ and $\mathrm{Tr}(P)(\lambda)=n$ for every
$\lambda^{}_{}$ in $\Lambda^{}_{}\backslash\Lambda^{}_{m}$. This is
a contradiction between the existence of $P$ and the assumption that
a frame in the relative commutant $\{A^{}_{}\}^{\prime}\cap
M^{}_{n}(L^{\infty}(\mu))$ only contains the identity of
$M^{}_{n}(L^{\infty}(\mu))$. Hence the proof is finished.
\end{proof}

\begin{proof}[Proof of Proposition $2.8$]
By Lemma $2.6$, there exists an invertible operator $Y$ in
$M^{}_{n}(L^{\infty}(\mu))$ such that every $YP^{}_{k}Y^{-1}$ is a
diagonal projection in $M^{}_{n}(L^{\infty}(\mu))$ for
$k=1,\ldots,n$. Note that $\{YP^{}_{k}Y^{-1}\}^{m}_{k=1}$ is a
finite frame in $\{YAY^{-1}\}^{\prime}\cap
M^{}_{n}(L^{\infty}(\mu))$.

If $\{YP^{}_{k}Y^{-1}\}^{m}_{k=1}$ contains only one element, the
identity of $M^{}_{n}(L^{\infty}(\mu))$, then this frame and
$\mathscr{E}^{}_{n}$, the set of central projections in
$M^{}_{n}(L^{\infty}(\mu))$, generate a bounded maximal abelian set
of idempotents in $\{YAY^{-1}\}^{\prime}\cap
M^{}_{n}(L^{\infty}(\mu))$ which is also $\mathscr{E}^{}_{n}$.

By (\cite{Deckard_0}, Theorem $2$) and for the sake of simplicity,
there exists a unitary operator $U$ in $M^{}_{n}(L^{\infty}(\mu))$
such that $UYAY^{-1}U^{*}(=A^{}_{0})$ is of the upper triangular
form as in $(2.32)$. By Lemma $2.9$ and Lemma $2.10$, we obtain that
the entries in the main diagonal of $A^{}_{0}$ are the same and the
$1$-diagonal entries of $A^{}_{0}$ are supported on $\Lambda$.

If $\{YP^{}_{k}Y^{-1}\}^{m}_{k=1}$ contains more than one element,
then $YAY^{-1}$ can be expressed in the diagonal form
$$YAY^{-1}=\begin{pmatrix}
B^{}_{{1}}&\cdots&0\\
\vdots&\ddots&\vdots\\
0&\cdots&B^{}_{{m}}\\
\end{pmatrix}^{}_{m\times m}, \eqno{(2.57)}$$ where every $B^{}_{{k}}$ is
the restriction of $YAY^{-1}$ on the range of $YP^{}_{k}Y^{-1}$ for
$k=1,\ldots,m$. Write $\mathrm{Tr}(P^{}_{k})(\lambda)=n^{}_{k}$
a.e.~$[\mu]$ on $\Lambda$. Then the restriction of $YP^{}_{k}Y^{-1}$
on its range becomes the identity of
$M^{}_{n^{}_{k}}(L^{\infty}(\mu))$ and $B^{}_{{k}}$ is in
$M^{}_{n^{}_{k}}(L^{\infty}(\mu))$. In particular, the identity of
$M^{}_{n^{}_{k}}(L^{\infty}(\mu))$ forms a finite frame in
$\{B^{}_{k}\}^{\prime}_{}\cap M^{}_{n^{}_{k}}(L^{\infty}(\mu))$.
Therefore by (\cite{Deckard_0}, Theorem $2$), there exists a unitary
operator $U^{}_{k}$ in $M^{}_{n^{}_{k}}(L^{\infty}(\mu))$ such that
$U^{}_{k}B^{}_{k}U^{*}_{k}$ is of the upper triangular form
$$U^{}_{k}B^{}_{k}U^{*}_{k}=\begin{pmatrix}
A^{{k}}_{11}&A^{{k}}_{12}&\cdots&A^{{k}}_{1,n^{}_{k}}\\
0&A^{{k}}_{22}&\cdots&A^{{k}}_{2,n^{}_{k}}\\
\vdots&\vdots&\ddots&\vdots\\
0&0&\cdots&A^{{k}}_{n^{}_{k}n^{}_{k}}\\
\end{pmatrix}^{}_{n^{}_{k}\times n^{}_{k}}
\begin{matrix}
L^{2}(\mu)\\
L^{2}(\mu)\\
\vdots\\
L^{2}(\mu)\\
\end{matrix}. \eqno{(2.58)}$$ Write
$A^{}_{k}=U^{}_{k}B^{}_{k}U^{*}_{k}$. Then by the foregoing proof,
we obtain that $A^{}_{k}$ is with the required properties for
$k=1,\ldots,m$.
\end{proof}

Note that the reverse assertion is obvious by (\cite{Shi_2}, Lemma
$3.1$). Therefore for an operator $A$ in
$M^{}_{n^{}_{}}(L^{\infty}(\mu))$, we obtain a necessary and
sufficient condition to connect the structures of $A$ and a finite
frame of the relative commutant $\{A\}^{\prime}\cap
M^{}_{n^{}_{}}(L^{\infty}(\mu))$. We finish this section with an
example to show that there exists an operator in
$M^{}_{2^{}_{}}(L^{\infty}(\mu))$ such that the relative commutant
contains no finite frames.

\begin{example}
Let $A$ in $M^{}_{2^{}_{}}(L^{\infty}(\mu))$ be of the form
$$A=\begin{pmatrix}
f^{}_{}&\mathbf{1}\\
0&-2f\\
\end{pmatrix}
\begin{matrix}
L^{2}(\mu)\\
L^{2}(\mu)\\
\end{matrix}, \eqno{(2.59)}$$
where $f$ is injective and the spectrum of $f$ is the interval
$[0,1]$. If $P$ is an idempotent in the relative commutant
$\{A\}^{\prime}\cap M^{}_{2^{}_{}}(L^{\infty}(\mu))$, then we can
express $P$ in the form
$$P=\begin{pmatrix}
p^{}_{11}&p^{}_{12}\\
p^{}_{21}&p^{}_{22}\\
\end{pmatrix}
\begin{matrix}
L^{2}(\mu)\\
L^{2}(\mu)\\
\end{matrix}, \eqno{(2.60)}$$ where $p^{}_{ij}$ is in
$L^{\infty}(\mu)$ for $1\leq i,j\leq 2$. The equality $AP=PA$ yields
that
$$\begin{cases}
-2f^{}_{}p^{}_{22}=p^{}_{21}-2p^{}_{22}f^{}_{};\\
f^{}_{}p^{}_{12}+p^{}_{22}
=p^{}_{11}-2p^{}_{12}f^{}_{}.\\
\end{cases} \eqno{(2.61)}$$ Therefore, we obtain
$$\begin{cases}
p^{}_{21}=0;\\
3f^{}_{}p^{}_{12}
=p^{}_{11}-p^{}_{22}.\\
\end{cases} \eqno{(2.62)}$$ Since $P$ is an idempotent, we obtain
that $p^{}_{ii}$ is a characteristic function in $L^{\infty}(\mu)$
for $i=1,2$. A further computation shows that the idempotents in the
relative commutant $\{A\}^{\prime}\cap
M^{}_{2^{}_{}}(L^{\infty}(\mu))$ form an abelian set. Note that this
abelian set of idempotents is unbounded. Thus there does not exist
an idempotent $Q$ in $\{A\}^{\prime}\cap
M^{}_{2^{}_{}}(L^{\infty}(\mu))$ such that
$\mathrm{Tr}(Q)(\lambda)=1$ a.e.~$[\mu]$ on $\Lambda$. If
$\{A\}^{\prime}\cap M^{}_{2^{}_{}}(L^{\infty}(\mu))$ contains a
finite frame $\{P^{}_{k}\}^{m}_{k=1}$, then there exists a
$P^{}_{k^{}_{0}}$ in the finite frame such that
$f(\mathrm{supp}(\mathrm{Tr}(P^{}_{k^{}_{0}})))$ contains an open
neighborhood of $0$. Thus $\mathrm{Tr}(P^{}_{k^{}_{0}})$ can not
take a constant on its support and we can construct a proper
sub-idempotent of $P^{}_{k^{}_{0}}$ in $\{A\}^{\prime}\cap
M^{}_{2^{}_{}}(L^{\infty}(\mu))$ supported on
$\mathrm{supp}(\mathrm{Tr}(P^{}_{k^{}_{0}}))$. This is a
contradiction.
\end{example}

\section{Bounded maximal abelian sets of idempotents in
$\{A\}^{\prime}\cap M^{}_{n^{}_{}}(L^{\infty}(\mu))$}

For a matrix $A$ in $M^{}_{n}(\mathbb{C})$, any two (bounded)
maximal abelian sets of idempotents $\mathscr{P}$ and $\mathscr{Q}$
in $\{A\}^{\prime}\cap M^{}_{n}(\mathbb{C})$ are similar to each
other in $\{A\}^{\prime}\cap M^{}_{n}(\mathbb{C})$. In this section,
for $A$ in $M^{}_{n^{}_{}}(L^{\infty}(\mu))$ such that
$\{A\}^{\prime}\cap M^{}_{n^{}_{}}(L^{\infty}(\mu))$ contains a
finite frame, we prove that every two bounded maximal abelian sets
of idempotents $\mathscr{P}$ and $\mathscr{Q}$ in
$\{A\}^{\prime}\cap M^{}_{n^{}_{}}(L^{\infty}(\mu))$ are similar to
each other in $\{A\}^{\prime}\cap M^{}_{n^{}_{}}(L^{\infty}(\mu))$.

We need to mention that in (\cite{Shi_2}, Theorem $3.3$) we prove a
special case for the motivation mentioned in the preceding
paragraph. That is if an operator $A$ is as in the form of $(2.31)$
$$A^{}_{{}}=\begin{pmatrix}
A^{{}}_{11}&A^{{}}_{12}&\cdots&A^{{}}_{1n^{}_{}}\\
0&A^{{}}_{22}&\cdots&A^{{}}_{2n^{}_{}}\\
\vdots&\vdots&\ddots&\vdots\\
0&0&\cdots&A^{{}}_{n^{}_{}n^{}_{}}\\
\end{pmatrix}^{}_{n^{}_{}\times n^{}_{}}
\begin{matrix}
L^{2}(\mu)\\
L^{2}(\mu)\\
\vdots\\
L^{2}(\mu)\\
\end{matrix} \eqno{(3.1)}$$ such that:
\begin{enumerate}
\item $A^{{}}_{11}$ corresponds to an injective Borel function
$\phi$ in $L^{\infty}(\mu)$;
\item the equality $A^{{}}_{jj}=A^{{}}_{11}$ holds for $j=2,\ldots,n^{}_{}$;
\item the support of
$A^{{}}_{j,j+1}$ equals $\Lambda$ for $j=1,\ldots,n^{}_{}-1$,
\end{enumerate}
then every two bounded maximal abelian sets of idempotents
$\mathscr{P}$ and $\mathscr{Q}$ in the relative commutant
$\{A^{(k)}\}^{\prime}\cap M^{}_{nk^{}_{}}(L^{\infty}(\mu))$ are
similar to each other in $\{A^{(k)}\}^{\prime}\cap
M^{}_{nk^{}_{}}(L^{\infty}(\mu))$, where $k$ is a positive integer.
We generalize the models applied in (\cite{Shi_2}, Theorem $3.3$) to
be the forms characterized in $(2.30)$ and $(2.31)$ to continue our
study in this section.

Part of our main theorem is abstracted from the following example.

\begin{example}
Let $A$ in $M^{}_{4}(L^{\infty}(\mu))$ be of the form
$$A=\begin{pmatrix}
A^{}_{1}&{0}\\
{0}&A^{}_{2}\\
\end{pmatrix},\mbox{ and }
A^{}_{1}=\begin{pmatrix}
f&{f^{}_{1}}\\
{0}&f\\
\end{pmatrix}
\begin{matrix}
L^{2}(\mu)\\
L^{2}(\mu)\\
\end{matrix},\,
A^{}_{2}=\begin{pmatrix}
f&{f^{}_{2}}\\
{0}&f\\
\end{pmatrix}
\begin{matrix}
L^{2}(\mu)\\
L^{2}(\mu)\\
\end{matrix}, \eqno{(3.2)}$$
where $f^{}_{i}$ in $L^{\infty}(\mu)$ is supported on $\Lambda$ for
$i=1,2$. Then we prove that for every two bounded maximal abelian
sets of idempotents  $\mathscr{P}$ and $\mathscr{Q}$ in
$\{A\}^{\prime}\cap M^{}_{4^{}_{}}(L^{\infty}(\mu))$, there exists
an invertible operator $X$ in $\{A\}^{\prime}\cap
M^{}_{4^{}_{}}(L^{\infty}(\mu))$ such that
$X\mathscr{P}X^{-1}=\mathscr{Q}$. We divide the proof into two
parts.

First, we assert that for every idempotent $P$ in
$\{A\}^{\prime}\cap M^{}_{4^{}_{}}(L^{\infty}(\mu))$, there exists
an invertible operator $X$ in $\{A\}^{\prime}\cap
M^{}_{4^{}_{}}(L^{\infty}(\mu))$ such that $XPX^{-1}$ is diagonal.

For this purpose, we characterize the relative commutant of $A$ in
$M^{}_{4^{}_{}}(L^{\infty}(\mu))$. For an element $B$ in
$\{A\}^{\prime}\cap M^{}_{4^{}_{}}(L^{\infty}(\mu))$, we can write
$B$ in the block form
$$\begin{pmatrix}
B^{}_{11}&B^{}_{12}\\
B^{}_{21}&B^{}_{22}\\
\end{pmatrix} \eqno{(3.3)}$$ such that $B^{}_{ij}$ satisfies the equality
$A^{}_{i}B^{}_{ij}=B^{}_{ij}A^{}_{j}$ for $i,j=1,2$. By a similar
computation as in $(2.60)$, we obtain that $B^{}_{ij}$ can be
expressed in the form
$$B^{}_{ij}=\begin{pmatrix}
\varphi^{ij}_{11}&\varphi^{ij}_{12}\\
0&\varphi^{ij}_{22}\\
\end{pmatrix}
\begin{matrix}
L^{2}(\mu)\\
L^{2}(\mu)\\
\end{matrix}, \eqno{(3.4)}$$
where $\varphi^{ij}_{kl}$ is in $L^{\infty}(\mu)$ for $k,l=1,2$. The
equality $A^{}_{i}B^{}_{ii}=B^{}_{ii}A^{}_{i}$ yields that
$f^{}_{i}\varphi^{ii}_{22}=\varphi^{ii}_{11}f^{}_{i}$ for $i=1,2$.
By $f^{}_{i}(\lambda)\neq 0$ for almost every $\lambda$ in $\Lambda$
and $i=1,2$, we obtain that $\varphi^{ii}_{11}=\varphi^{ii}_{22}$
a.e.~$[\mu]$ on $\Lambda$ and $i=1,2$. Thus we abbreviate
$\varphi^{ii}_{jj}$ as $\varphi^{ii}_{}$ for $j=1,2$. The equality
$A^{}_{i}B^{}_{ij}=B^{}_{ij}A^{}_{j}$ yields that
$f^{}_{i}\varphi^{ij}_{22}=\varphi^{ij}_{11}f^{}_{j}$ for $i=1,2$
and $i\neq j$. Thus $B$ can be expressed in the form
$$B=\begin{pmatrix}
\varphi^{11}_{}&\varphi^{11}_{12}&\varphi^{12}_{11}&\varphi^{12}_{12}\\
0&\varphi^{11}_{}&0&\varphi^{12}_{22}\\
\varphi^{21}_{11}&\varphi^{21}_{12}&\varphi^{22}_{}&\varphi^{22}_{12}\\
0&\varphi^{21}_{22}&0&\varphi^{22}_{}\\
\end{pmatrix}
\begin{matrix}
L^{2}(\mu)\\
L^{2}(\mu)\\
L^{2}(\mu)\\
L^{2}(\mu)\\
\end{matrix}. \eqno{(3.5)}$$
Let $U\in M^{}_{4}(L^{\infty}(\mu))$ be of the form
$$U=\begin{pmatrix}
\mathbf{1}&0&0&0\\
0&0&\mathbf{1}&0\\
0&\mathbf{1}&0&0\\
0&0&0&\mathbf{1}\\
\end{pmatrix}
\begin{matrix}
L^{2}(\mu)\\
L^{2}(\mu)\\
L^{2}(\mu)\\
L^{2}(\mu)\\
\end{matrix}. \eqno{(3.6)}$$ Then $UBU^{*}\in
\{UAU^{*}\}^{\prime}\cap M^{}_{4^{}_{}}(L^{\infty}(\mu))$ is of the
form
$$UBU^{*}=\begin{pmatrix}
\varphi^{11}_{}&\varphi^{12}_{11}&\varphi^{11}_{12}&\varphi^{12}_{12}\\
\varphi^{21}_{11}&\varphi^{22}_{}&\varphi^{21}_{12}&\varphi^{22}_{12}\\
0&0&\varphi^{11}_{}&\varphi^{12}_{22}\\
0&0&\varphi^{21}_{22}&\varphi^{22}_{}\\
\end{pmatrix}
\begin{matrix}
L^{2}(\mu)\\
L^{2}(\mu)\\
L^{2}(\mu)\\
L^{2}(\mu)\\
\end{matrix}. \eqno{(3.7)}$$ Note that $\varphi^{ij}_{11}$ and
$\varphi^{ij}_{22}$ satisfy the equality
$f^{}_{i}\varphi^{ij}_{22}=\varphi^{ij}_{11}f^{}_{j}$ for $i\neq j$
and $i,j=1,2$.

Let $P$ be an idempotent in $\{UAU^{*}\}^{\prime}\cap
M^{}_{4^{}_{}}(L^{\infty}(\mu))$. Then by $(3.7)$ the idempotent $P$
can be expressed in the block form
$$P=\begin{pmatrix}
P^{}_{11}&P^{}_{12}\\
0&P^{}_{22}\\
\end{pmatrix},
P^{}_{11}=\begin{pmatrix}
\varphi^{11}_{}&\varphi^{12}_{11}\\
\varphi^{21}_{11}&\varphi^{22}_{}\\
\end{pmatrix},
P^{}_{22}=\begin{pmatrix}
\varphi^{11}_{}&\varphi^{12}_{22}\\
\varphi^{21}_{22}&\varphi^{22}_{}\\
\end{pmatrix}, \eqno{(3.8)}$$ where we obtain that $P^{}_{ii}$ is
an idempotent in $M^{}_{2}(L^{\infty}(\mu))$ for $i=1,2$. Let $R$ be
of the form
$$R=\begin{pmatrix}
0&P^{}_{12}\\
0&0\\
\end{pmatrix}. \eqno{(3.9)}$$ Then $R$ is in
$\{UAU^{*}\}^{\prime}\cap M^{}_{4^{}_{}}(L^{\infty}(\mu))$. Denote
by $\sigma^{}_{\{UAU^{*}\}^{\prime}\cap
M^{}_{4^{}_{}}(L^{\infty}(\mu))}(R)$ the spectrum of $R$ in the
unital Banach algebra $\{UAU^{*}\}^{\prime}\cap
M^{}_{4^{}_{}}(L^{\infty}(\mu))$. Thus the equality
$$\sigma^{}_{\{UAU^{*}\}^{\prime}\cap
M^{}_{4^{}_{}}(L^{\infty}(\mu))}(SR)=\sigma^{}_{\{UAU^{*}\}^{\prime}\cap
M^{}_{4^{}_{}}(L^{\infty}(\mu))}(RS)=\{0\} \eqno{(3.10)}$$ holds for
every operator $S$ in $\{UAU^{*}\}^{\prime}\cap
M^{}_{4^{}_{}}(L^{\infty}(\mu))$. Thus the equality
$$(2P-I)(2P-I-R)=I-(2P-1)R \eqno{(3.11)}$$ yields that $2P-I-R$ is
invertible in $\{UAU^{*}\}^{\prime}\cap
M^{}_{4^{}_{}}(L^{\infty}(\mu))$. (Note that $2P-I$ is invertible in
$\{UAU^{*}\}^{\prime}\cap M^{}_{4^{}_{}}(L^{\infty}(\mu))$.) Since
the equality $(P-R)^{2}=P+R^{2}-RP-PR=P-R$ yields that
$R^{2}-RP-PR+R=0$, we obtain
$$\begin{array}{rcl}
(P-R)(2P-I-R)&=&P-PR-2RP+R+R^{2}\\
&=&P-RP\\
&=&(2P-I-R)P\\
\end{array}, \eqno{(3.12)}$$ which means $P$ is similar to $P-R$ in
$\{UAU^{*}\}^{\prime}\cap M^{}_{4^{}_{}}(L^{\infty}(\mu))$. The
idempotent $P-R$ is of the form
$$P-R=\begin{pmatrix}
\varphi^{11}_{}&\varphi^{12}_{11}&0&0\\
\varphi^{21}_{11}&\varphi^{22}_{}&0&0\\
0&0&\varphi^{11}_{}&\varphi^{12}_{22}\\
0&0&\varphi^{21}_{22}&\varphi^{22}_{}\\
\end{pmatrix}
\begin{matrix}
L^{2}(\mu)\\
L^{2}(\mu)\\
L^{2}(\mu)\\
L^{2}(\mu)\\
\end{matrix}. \eqno{(3.13)}$$

Next, we need to construct an invertible operator in
$\{UAU^{*}\}^{\prime}\cap M^{}_{4^{}_{}}(L^{\infty}(\mu))$ such that
$P-R$ is similar to a diagonal projection.

Without loss of generality, we assume that
$\mathrm{Tr}(P^{}_{11})(\lambda)=1$ for every $\lambda$ in
$\Lambda$. By the proof of Lemma $2.3$, there exists a unitary
operator $V$ in $M^{}_{4}(L^{\infty}(\mu))$ such that $V(P-R)V^{*}$
is of the form
$$V(P-R)V^{*}=\begin{pmatrix}
\psi^{11}_{}&\psi^{12}_{11}&0&0\\
\psi^{21}_{11}&\psi^{22}_{}&0&0\\
0&0&\psi^{11}_{}&\psi^{12}_{22}\\
0&0&\psi^{21}_{22}&\psi^{22}_{}\\
\end{pmatrix}
\begin{matrix}
L^{2}(\mu)\\
L^{2}(\mu)\\
L^{2}(\mu)\\
L^{2}(\mu)\\
\end{matrix}, \eqno{(3.14)}$$
where $|\psi^{11}_{}(\lambda)|\geq 1\slash 2$ for every $\lambda$ in
$\Lambda$. Let $Y$ be in the form
$$Y=\begin{pmatrix}
\mathbf{1}&0&0&0\\
-{\psi^{21}_{11}}\slash{\psi^{11}_{}}&\mathbf{1}&0&0\\
0&0&\mathbf{1}&0\\
0&0&-{\psi^{21}_{22}}\slash{\psi^{11}_{}}&\mathbf{1}\\
\end{pmatrix}
\begin{matrix}
L^{2}(\mu)\\
L^{2}(\mu)\\
L^{2}(\mu)\\
L^{2}(\mu)\\
\end{matrix}. \eqno{(3.15)}$$
Then by Lemma $2.3$, we obtain that $YV(P-R)V^{*}Y^{-1}$ is diagonal
in $M^{}_{4}(L^{\infty}(\mu))$. By the constructions of $V$ and $Y$,
the operator $V^{*}YV$ is invertible in $\{UAU^{*}\}^{\prime}\cap
M^{}_{4^{}_{}}(L^{\infty}(\mu))$. Thus $V^{*}YV(2P-I-R)$ is an
invertible operator in $\{UAU^{*}\}^{\prime}\cap
M^{}_{4^{}_{}}(L^{\infty}(\mu))$. Therefore $U^{*}V^{*}YV(2P-I-R)U$
is an invertible operator in $\{A\}^{\prime}\cap
M^{}_{4^{}_{}}(L^{\infty}(\mu))$ as required and we achieve the
assertion.

Let $\mathscr{E}$ be the set of diagonal projections in
$\{A\}^{\prime}\cap M^{}_{4^{}_{}}(L^{\infty}(\mu))$. Then by a
similar computation as in $(2.61)$ and $(2.62)$, we can verify that
$\mathscr{E}$ is a bounded maximal abelian set of idempotents in
$\{A\}^{\prime}\cap M^{}_{4^{}_{}}(L^{\infty}(\mu))$. Assume that
$\mathscr{P}$ is a bounded maximal abelian set of idempotents in
$\{A\}^{\prime}\cap M^{}_{4^{}_{}}(L^{\infty}(\mu))$. Then by
Proposition $2.5$, $\{A\}^{\prime}\cap
M^{}_{4^{}_{}}(L^{\infty}(\mu))$ contains a finite frame
$\mathscr{P}^{}_{0}$. By the preceding assertion there exists an
invertible operator $X$ in $\{A\}^{\prime}\cap
M^{}_{4^{}_{}}(L^{\infty}(\mu))$ such that every element in
$X\mathscr{P}^{}_{0}X^{-1}$ is diagonal. Thus
$X\mathscr{P}^{}_{}X^{-1}$ is included in $\mathscr{E}$. The
maximality of $X\mathscr{P}^{}_{}X^{-1}$ in $\{A\}^{\prime}\cap
M^{}_{4^{}_{}}(L^{\infty}(\mu))$ ensures that
$X\mathscr{P}^{}_{}X^{-1}=\mathscr{E}$. Therefore, every bounded
maximal abelian set of idempotents is similar to $\mathscr{E}$ in
$\{A\}^{\prime}\cap M^{}_{4^{}_{}}(L^{\infty}(\mu))$. This example
is finished. \qed
\end{example}

To generalize Example $(3.1)$, we need the following lemmas.

\begin{lemma}
Let $A$ in $M^{}_{n^{}_{}}(L^{\infty}(\mu))$ be of the upper
triangular form
$$A^{}_{{}}=\begin{pmatrix}
A^{{}}_{11}&A^{{}}_{12}&\cdots&A^{{}}_{1n^{}_{}}\\
0&A^{{}}_{22}&\cdots&A^{{}}_{2n^{}_{}}\\
\vdots&\vdots&\ddots&\vdots\\
0&0&\cdots&A^{{}}_{n^{}_{}n^{}_{}}\\
\end{pmatrix}^{}_{n^{}_{}\times n^{}_{}}
\begin{matrix}
L^{2}(\mu)\\
L^{2}(\mu)\\
\vdots\\
L^{2}(\mu)\\
\end{matrix} \eqno{(3.16)}$$
such that
\begin{enumerate}
\item $A^{{}}_{ii}=A^{{}}_{jj}$ for $i,j=1,2,\ldots,n$;
\item $A^{{}}_{i,i+1}$ is supported on
$\Lambda$ for $i=1,2,\ldots,n-1$.
\end{enumerate}
If $B$ is an operator in $\{A\}^{\prime}\cap
M^{}_{n^{}_{}}(L^{\infty}(\mu))$, then $B$ is of the form
$$B^{}_{{}}=\begin{pmatrix}
B^{{}}_{11}&B^{{}}_{12}&\cdots&B^{{}}_{1n^{}_{}}\\
0&B^{{}}_{22}&\cdots&B^{{}}_{2n^{}_{}}\\
\vdots&\vdots&\ddots&\vdots\\
0&0&\cdots&B^{{}}_{n^{}_{}n^{}_{}}\\
\end{pmatrix}^{}_{n^{}_{}\times n^{}_{}}
\begin{matrix}
L^{2}(\mu)\\
L^{2}(\mu)\\
\vdots\\
L^{2}(\mu)\\
\end{matrix} \eqno{(3.17)}$$ such that $B^{{}}_{ii}=B^{{}}_{jj}$ for
$i,j=1,2,\ldots,n$.
\end{lemma}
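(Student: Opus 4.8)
The plan is to reduce the statement to a combinatorial fact about strictly upper triangular matrices over $L^{\infty}(\mu)$, using the elementary observation that every entry of an element of $M^{}_{n}(L^{\infty}(\mu))$ is a multiplication operator, and multiplication operators on $L^{2}(\mu)$ pairwise commute. Write $f=A^{}_{11}$ and split $A=f I^{}_{n}+N$, where $I^{}_{n}$ is the identity of $M^{}_{n}(L^{\infty}(\mu))$ and $N$ is the strictly upper triangular matrix with entries $N^{}_{ij}=A^{}_{ij}$ for $i<j$ and $N^{}_{ij}=0$ otherwise. By hypothesis $(1)$ all diagonal entries of $A$ equal $f$, so $f I^{}_{n}=\mathrm{diag}(f,\ldots,f)$ commutes with every $B$ in $M^{}_{n}(L^{\infty}(\mu))$; hence $AB=BA$ if and only if $NB=BN$. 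The input from hypothesis $(2)$ is that each superdiagonal entry $N^{}_{i,i+1}=A^{}_{i,i+1}$ is nonzero a.e.~$[\mu]$, so that $g h=0$ in $L^{\infty}(\mu)$ with $g=N^{}_{i,i+1}$ forces $h=0$ a.e.~$[\mu]$.

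First I would show that $NB=BN$ forces $B$ to be upper triangular. Comparing the $(i,j+1)$ entries of $NB$ and $BN$ gives $\sum^{n}_{k=i+1}N^{}_{ik}B^{}_{k,j+1}=\sum^{j}_{k=1}B^{}_{ik}N^{}_{k,j+1}$. I run a double induction: the outer one on the row index $i$ from $n$ down to $2$, the inner one on the column index $j$ from $1$ up to $i-1$. For the left-hand side, every index $k$ occurring there satisfies $k\geq i+1>i\geq j+1$, so $B^{}_{k,j+1}$ is a strictly-below-diagonal entry lying in a row strictly below row $i$, already known to vanish by the outer hypothesis; thus the left-hand side is $0$. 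On the right-hand side the inner hypothesis $B^{}_{i1}=\cdots=B^{}_{i,j-1}=0$ collapses the sum to $B^{}_{ij}N^{}_{j,j+1}$. Cancelling $N^{}_{j,j+1}$ yields $B^{}_{ij}=0$, so all entries of $B$ strictly below the diagonal vanish; the base step $i=n$ is immediate since the last row of $N$ is zero.

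With $B$ now upper triangular, I would read off the diagonal relation from the superdiagonal equations: comparing the $(i,i+1)$ entries of $NB$ and $BN$ and using upper triangularity of $B$, the two sides reduce to $N^{}_{i,i+1}B^{}_{i+1,i+1}$ and $B^{}_{ii}N^{}_{i,i+1}$ respectively. Since all the entries involved are commuting multiplication operators and $N^{}_{i,i+1}$ is nonzero a.e.~$[\mu]$, this gives $B^{}_{ii}=B^{}_{i+1,i+1}$ for $i=1,\ldots,n-1$, hence $B^{}_{ii}=B^{}_{jj}$ for all $i,j$ by transitivity, which is the assertion.

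The argument is essentially routine once the reduction $AB=BA\Leftrightarrow NB=BN$ is in place; the only step demanding care is the bookkeeping of the double induction, namely verifying that each $B^{}_{k,j+1}$ appearing in the expansion of $(NB)^{}_{i,j+1}$ is indexed by a pair $(k,j+1)$ with $k>j+1$ and $k>i$ and so has already been shown to vanish. Hypotheses $(1)$ and $(2)$ are used precisely at the two cancellation points, and both are exactly the structural features of $A$ furnished by Proposition~$2.8$.
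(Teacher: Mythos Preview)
Your proof is correct and follows essentially the same approach as the paper's: both arguments work row by row from the bottom, then column by column from the left, using the $(i,j+1)$ entry equations to kill $B_{ij}$ via the nonvanishing of the superdiagonal entry $A_{j,j+1}$, and then read off $B_{ii}=B_{i+1,i+1}$ from the $(i,i+1)$ entries. Your preliminary reduction $AB=BA\Leftrightarrow NB=BN$ is a mild cosmetic simplification---the paper instead keeps $A$ intact and cancels the diagonal contributions $A_{ii}B_{i,j+1}=B_{i,j+1}A_{j+1,j+1}$ in each equation---but the combinatorics and the use of the two hypotheses are identical.
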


\begin{proof}
For the sake of simplicity, we use the relaxed convention of
treating Borel representatives as elements in $L^{\infty}(\mu)$ and
consider $A^{}_{ij}$ and $B^{}_{ij}$ as elements in
$L^{\infty}(\mu)$ for $i,j=1,\ldots,n$. Let $B$ in
$\{A\}^{\prime}\cap M^{}_{n^{}_{}}(L^{\infty}(\mu))$ be of the form
$$B^{}_{{}}=\begin{pmatrix}
B^{{}}_{11}&B^{{}}_{12}&\cdots&B^{{}}_{1n^{}_{}}\\
B^{{}}_{21}&B^{{}}_{22}&\cdots&B^{{}}_{2n^{}_{}}\\
\vdots&\vdots&\ddots&\vdots\\
B^{{}}_{n1}&B^{{}}_{n2}&\cdots&B^{{}}_{n^{}_{}n^{}_{}}\\
\end{pmatrix}^{}_{n^{}_{}\times n^{}_{}}
\begin{matrix}
L^{2}(\mu)\\
L^{2}(\mu)\\
\vdots\\
L^{2}(\mu)\\
\end{matrix}. \eqno{(3.18)}$$
For the $(n,2)$ entry of $AB$, by $AB=BA$ we obtain the equality
$$A^{{}}_{nn}B^{{}}_{n2}=B^{{}}_{n1}A^{{}}_{12}+B^{{}}_{n2}A^{{}}_{22}.
\eqno{(3.19)}$$ By $A^{{}}_{22}=A^{{}}_{nn}$ the equality $(3.19)$
yields that $B^{{}}_{n1}A^{{}}_{12}=0$. Since $A^{{}}_{12}$ is
supported on $\Lambda$, we obtain that $B^{{}}_{n1}=0$ a.e.~$[\mu]$
on $\Lambda$. Thus for the $(n,3)$ entry of $AB$, we obtain the
equality
$$A^{{}}_{nn}B^{{}}_{n3}=B^{{}}_{n2}A^{{}}_{23}+B^{{}}_{n3}A^{{}}_{33}.
\eqno{(3.20)}$$ By $A^{{}}_{33}=A^{{}}_{nn}$ the equality $(3.20)$
yields that $B^{{}}_{n2}A^{{}}_{23}=0$. Since $A^{{}}_{23}$ is
supported on $\Lambda$, we obtain that $B^{{}}_{n2}=0$ a.e.~$[\mu]$
on $\Lambda$. In this way and by the equalities with respect to the
entries in the $n$-th row of $AB$, we obtain that $B^{{}}_{ni}=0$
a.e.~$[\mu]$ on $\Lambda$ for $i=1,\ldots,n-1$. Based on this, by
the equalities with respect to the entries in the $(n-1)$-th row of
$AB$ and the fact that $A^{{}}_{i,i+1}$ is supported on $\Lambda$
for $i=1,\ldots,n-1$, we obtain that $B^{{}}_{n-1,i}=0$ a.e.~$[\mu]$
on $\Lambda$ for $i=1,\ldots,n-2$. Therefore, we finally obtain that
$B^{}_{ij}=0$ for $i>j$.

For the $(i,i+1)$ entry of $AB$, by $AB=BA$ we obtain the equality
$$A^{{}}_{ii}B^{{}}_{i,i+1}+A^{{}}_{i,i+1}B^{{}}_{i+1,i+1}=
B^{{}}_{ii}A^{{}}_{i,i+1}+B^{{}}_{i,i+1}A^{{}}_{i+1,i+1}.
\eqno{(3.21)}$$ By $A^{{}}_{ii}=A^{{}}_{i+1,i+1}$ the equality
$(3.21)$ yields that $A^{{}}_{i,i+1}B^{{}}_{i+1,i+1}=
B^{{}}_{ii}A^{{}}_{i,i+1}$. Since $A^{{}}_{i,i+1}$ is supported on
$\Lambda$, we obtain that $B^{{}}_{ii}=B^{{}}_{i+1,i+1}$
a.e.~$[\mu]$ on $\Lambda$.
\end{proof}

By Lemma $3.2$, we obtain that every idempotent in
$\{A\}^{\prime}\cap M^{}_{n^{}_{}}(L^{\infty}(\mu))$ is diagonal. In
the following lemma, we deal with a general case, but there is some
loss in the result compared with the result in the preceding lemma.

\begin{lemma}
Let $A$ in $M^{}_{m^{}_{}}(L^{\infty}(\mu))$ and $B$ in
$M^{}_{n^{}_{}}(L^{\infty}(\mu))$ be of the upper triangular forms
$$A^{}_{{}}=\begin{pmatrix}
A^{{}}_{11}&A^{{}}_{12}&\cdots&A^{{}}_{1m^{}_{}}\\
0&A^{{}}_{22}&\cdots&A^{{}}_{2m^{}_{}}\\
\vdots&\vdots&\ddots&\vdots\\
0&0&\cdots&A^{{}}_{m^{}_{}m^{}_{}}\\
\end{pmatrix}^{}_{m^{}_{}\times m^{}_{}}
\begin{matrix}
L^{2}(\mu)\\
L^{2}(\mu)\\
\vdots\\
L^{2}(\mu)\\
\end{matrix}, \eqno{(3.22)}$$
$$B^{}_{{}}=\begin{pmatrix}
B^{{}}_{11}&B^{{}}_{12}&\cdots&B^{{}}_{1n^{}_{}}\\
0&B^{{}}_{22}&\cdots&B^{{}}_{2n^{}_{}}\\
\vdots&\vdots&\ddots&\vdots\\
0&0&\cdots&B^{{}}_{n^{}_{}n^{}_{}}\\
\end{pmatrix}^{}_{n^{}_{}\times n^{}_{}}
\begin{matrix}
L^{2}(\mu)\\
L^{2}(\mu)\\
\vdots\\
L^{2}(\mu)\\
\end{matrix} \eqno{(3.23)}$$ such that
\begin{enumerate}
\item $A^{{}}_{ii}=B^{{}}_{jj}$ for $i=1,2,\ldots,m$ and $j=1,2,\ldots,n$;
\item $A^{{}}_{i,i+1}$ and $B^{{}}_{j,j+1}$ are supported on
$\Lambda$ for $i=1,2,\ldots,m-1$ and $j=1,2,\ldots,n-1$.
\end{enumerate}
If $m\geq n$, and the equalities $AC=CB$ and $BD=DA$ hold for
operator-valued matrices $C=(C^{}_{ij})^{}_{1\leq i\leq m; 1\leq
j\leq n}$ in $M^{}_{m\times n}(L^{\infty}(\mu))$ and
$D=(D^{}_{ij})^{}_{1\leq i\leq n; 1\leq j\leq m}$ in $M^{}_{n\times
m}(L^{\infty}(\mu))$, then $C^{}_{ij}=0$ for $i>j$ and $D^{}_{ij}=0$
for $i>j-(m-n)$.
\end{lemma}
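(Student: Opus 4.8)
The plan is to mimic the entrywise bookkeeping used in the proof of Lemma $3.2$, but now with index sets of different sizes, and to package the deductions as a double induction, carried out separately for $C$ and for $D$. First I would fix notation: write $\phi$ for the common value of the diagonal entries $A_{ii}=B_{jj}$, set $a_i:=A_{i,i+1}$ for $1\le i\le m-1$ and $b_j:=B_{j,j+1}$ for $1\le j\le n-1$, each of which is supported on $\Lambda$ by hypothesis, and pass to $N_A:=A-\phi I_m$ and $N_B:=B-\phi I_n$, which are strictly upper triangular. Since $\phi\in L^{\infty}(\mu)$ is central, $AC=CB$ is equivalent to $N_AC=CN_B$ and $BD=DA$ is equivalent to $N_BD=DN_A$. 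Reading off the $(i,j)$ entry of $N_AC=CN_B$ and of $N_BD=DN_A$ yields the two identities
\[\sum_{k>i}(N_A)_{ik}C_{kj}=\sum_{k<j}C_{ik}(N_B)_{kj},\qquad \sum_{k>i}(N_B)_{ik}D_{kj}=\sum_{k<j}D_{ik}(N_A)_{kj},\]
and all subsequent work consists in extracting consequences of these, repeatedly using the elementary fact that $a_i g=0$ a.e.\ $[\mu]$ together with $a_i\neq 0$ a.e.\ on $\Lambda$ forces $g=0$ on $\Lambda$.

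For $C$ I would run an outer induction on the column index $j=1,\dots,n$ and, for each fixed $j$, an inner induction on the row index $i$ descending through $i=m-1,m-2,\dots,j$, the inner step asserting $C_{i+1,j}=0$. Evaluating the first displayed identity at position $(i,j)$: on the right-hand side every summation index $k$ satisfies $k<j\le i$, so $C_{ik}=0$ by the outer (column) hypothesis and the right-hand side vanishes; on the left-hand side every summand with $k\ge i+2$ vanishes because those $C_{kj}$ were eliminated at the earlier (larger-$i$) inner steps, leaving $a_iC_{i+1,j}=0$ and hence $C_{i+1,j}=0$. Letting $i$ run down to $j$ gives $C_{kj}=0$ for $k=j+1,\dots,m$, i.e.\ $C_{ij}=0$ whenever $i>j$; the column $j=1$, for which the right-hand side is an empty sum, starts the outer induction.

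For $D$ the target zero region is the shifted staircase $\{\,j<i+(m-n)\,\}$, so the nesting must be reversed: an outer induction on the row index $i$ descending through $i=n,n-1,\dots,1$ and, for each fixed $i$, an inner induction on the column index $j=1,\dots,i+(m-n)-1$, the inner step asserting $D_{ij}=0$. Here I would evaluate the second displayed identity at position $(i,j+1)$, noting $j+1\le i+(m-n)\le m$: on the left-hand side each $k>i$ satisfies $j+1\le i+(m-n)<k+(m-n)$, so $D_{k,j+1}$ lies in the zero region of row $k$, which has already been treated, and hence vanishes; on the right-hand side each summand with $k<j$ vanishes by the inner hypothesis, so only the term $D_{ij}a_j$ survives, giving $D_{ij}a_j=0$ and hence $D_{ij}=0$. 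Running $i$ downward from $n$ and, inside each row, $j$ upward from $1$, one obtains $D_{ij}=0$ for all $j\le i+(m-n)-1$, i.e.\ $D_{ij}=0$ whenever $i>j-(m-n)$; the row $i=n$ (empty left-hand sum) and the column $j=1$ (empty inner hypothesis) start the two inductions.

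The computations themselves are routine matrix-product extractions. The only point that needs genuine care, and the step I expect to be the real obstacle, is choosing the two nested induction directions so that in each instance of the identities one applies, every entry other than the one being eliminated already lies in the part of $C$ (resp.\ $D$) previously shown to vanish; this is exactly what fails if one tries to run the $D$-induction in the same order as the $C$-induction, and it is the structural reason why the conclusion for $D$ is the weaker, shifted statement rather than the full lower-triangularity obtained for $C$.
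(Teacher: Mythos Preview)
Your proposal is correct and follows essentially the same approach as the paper's own proof: a column-by-column (for $C$) and row-by-row (for $D$) elimination in which each step isolates a single surviving term $a_iC_{i+1,j}$ or $D_{ij}a_j$ and then uses that the superdiagonal entries are supported on $\Lambda$. The only differences are cosmetic---you subtract $\phi I$ to pass to the strictly upper triangular nilpotents $N_A,N_B$ and phrase the argument as a formal double induction, whereas the paper works directly with $A$ and $B$ and writes out the first few entries explicitly before saying ``in this way''---but the underlying computation and the order of elimination are identical.
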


\begin{proof}
By $AC=CB$, we consider the $(m-1,1)$ entry of $AC$ and obtain that
$$A^{}_{m-1,m-1}C^{}_{m-1,1}+A^{}_{m-1,m}C^{}_{m1}=C^{}_{m-1,1}B^{}_{11}.
\eqno{(3.24)}$$ By $A^{}_{m-1,m-1}=B^{}_{11}$, the equality $(3.24)$
yields that $A^{}_{m-1,m}C^{}_{m1}=0$. Since $A^{}_{m-1,m}$ is
supported on $\Lambda$, we obtain that $C^{}_{m1}=0$ a.e.~$[\mu]$ on
$\Lambda$. Thus we consider the $(m-2,1)$ entry of $AC$ and obtain
the equality
$$A^{}_{m-2,m-2}C^{}_{m-2,1}+A^{}_{m-2,m-1}C^{}_{m-1,1}=C^{}_{m-2,1}B^{}_{11}.
\eqno{(3.25)}$$  By $A^{}_{m-2,m-2}=B^{}_{11}$ and the fact
$A^{}_{m-2,m-1}$ is supported on $\Lambda$, the equality $(3.25)$
yields that $C^{}_{m-1,1}=0$ a.e.~$[\mu]$ on $\Lambda$. By the
equalities with respect to the entries in the first column of $AC$,
we obtain that $C^{}_{i1}=0$ for $i=m,m-1,\ldots,2$. Based on this,
the equalities with respect to the entries in the second column of
$AC$ yield that $C^{}_{i2}=0$ for $i=m,m-1,\ldots,3$. In this way,
we finally obtain that $C^{}_{ij}=0$ for $i>j$.

By $BD=DA$, we consider the $(n,2)$ entry of $BD$ and obtain that
$$B^{}_{nn}D^{}_{n2}=D^{}_{n1}A^{}_{12}+D^{}_{n2}A^{}_{22}.
\eqno{(3.26)}$$ By $A^{}_{22}=B^{}_{nn}$, the equality $(3.26)$
yields that $D^{}_{n1}A^{}_{12}=0$. Since $A^{}_{12}$ is supported
on $\Lambda$, we obtain that $D^{}_{n1}=0$ a.e.~$[\mu]$ on
$\Lambda$. Thus we consider the $(n,3)$ entry of $BD$ and obtain the
equality
$$B^{}_{nn}D^{}_{n3}=D^{}_{n2}A^{}_{23}+D^{}_{n3}A^{}_{33}.
\eqno{(3.27)}$$ By $A^{}_{33}=B^{}_{nn}$, the equality $(3.27)$
yields that $D^{}_{n2}A^{}_{23}=0$. Since $A^{}_{23}$ is supported
on $\Lambda$, we obtain that $D^{}_{n2}=0$ a.e.~$[\mu]$ on
$\Lambda$. By the equalities with respect to the entries in the
$n$-th row of $BD$, we obtain that $D^{}_{ni}=0$ for
$i=1,\ldots,m-1$. Based on this, the equalities with respect to the
entries in the $(n-1)$-th row of $BD$ yield that $D^{}_{n-1,i}=0$
for $i=1,\ldots,m-2$. In this way, we finally obtain that
$D^{}_{ij}=0$ for $i+(m-n)>j$.
\end{proof}

From Lemma $3.4$ to Lemma $3.8$, we make preparations for proving
Theorem $3.9$. For each operator $A$ in
$M^{}_{n^{}_{}}(L^{\infty}(\mu))$ with the relative commutant
$\{A\}^{\prime}\cap M^{}_{n^{}_{}}(L^{\infty}(\mu))$ containing a
finite frame, we can decompose $A$ into two fundamental cases by
Proposition $2.8$ and the discussion preceding it. In the following
paragraphs, Lemma $3.4$ and Proposition $3.5$ deal with one case
while Lemma $3.6$ and Proposition $3.7$ deal with another case.
Lemma $3.8$ shows the reason why we make the decomposition.

\begin{lemma}
For $k=1,\ldots,m$, let $A^{}_{k}=(A^{k}_{ij})^{}_{1\leq i,j\leq n}$
be an upper triangular operator-valued matrix in
$M^{}_{n^{}_{}}(L^{\infty}(\mu))$ such that
\begin{enumerate}
\item $A^{k}_{ii}=A^{l}_{jj}$ for $k,l=1,2,\ldots,m$ and $i,j=1,2,\ldots,n$;
\item $A^{k}_{i,i+1}$ is supported on
$\Lambda$ for $k=1,2,\ldots,m$ and $i=1,2,\ldots,n-1$.
\end{enumerate}
If $P$ is an idempotent in $\{\sum^{m}_{k=1}\oplus
A^{}_{k}\}^{\prime}\cap M^{}_{mn^{}_{}}(L^{\infty}(\mu))$, then
there exists an invertible operator $X$ in $\{\sum^{m}_{k=1}\oplus
A^{}_{k}\}^{\prime}\cap M^{}_{mn^{}_{}}(L^{\infty}(\mu))$ such that
$XPX^{-1}$ is diagonal.
\end{lemma}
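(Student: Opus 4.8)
The plan is to reduce to the situation of Example $3.1$ and then push the argument through for arbitrary $n$ and $m$. First I would apply the permutation unitary $U$ in $M^{}_{mn}(L^{\infty}(\mu))$ analogous to $(3.6)$ that regroups the $mn$ copies of $L^{2}(\mu)$ by the inner index rather than by the summand index $k$. Under conjugation by $U$, the operator $T:=U(\sum^{m}_{k=1}\oplus A^{}_{k})U^{*}$ becomes an $n\times n$ block matrix with entries in $M^{}_{m}(L^{\infty}(\mu))$; using hypotheses $(1)$ and $(2)$ together with Lemma $3.3$ applied to each pair $(A^{}_{k},A^{}_{l})$, one sees that $T$ is block upper triangular, that its diagonal blocks are all equal to $d\cdot I^{}_{m}$ where $d$ is the common value of the main diagonal entries of the $A^{}_{k}$, and that each super-diagonal block is the diagonal matrix $T^{}_{i,i+1}=\mathrm{diag}(A^{1}_{i,i+1},\ldots,A^{m}_{i,i+1})$ whose entries are all supported on $\Lambda$. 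Moreover $\{T\}^{\prime}\cap M^{}_{mn}(L^{\infty}(\mu))=U(\{\sum^{m}_{k=1}\oplus A^{}_{k}\}^{\prime}\cap M^{}_{mn}(L^{\infty}(\mu)))U^{*}$, and by Lemma $3.3$ every element of this relative commutant is block upper triangular. Setting $Q:=UPU^{*}$, one gets an idempotent in $\{T\}^{\prime}$ which is block upper triangular, $Q=(Q^{}_{ii^{\prime}})^{}_{1\leq i\leq i^{\prime}\leq n}$; comparing diagonal blocks in $Q^{2}=Q$ shows each $Q^{}_{ii}$ is an idempotent in $M^{}_{m}(L^{\infty}(\mu))$, and comparing the $(i,i+1)$ block of $TQ=QT$ gives the twisting relations $(Q^{}_{i+1,i+1})^{}_{kl}A^{k}_{i,i+1}=(Q^{}_{ii})^{}_{kl}A^{l}_{i,i+1}$ for all $k,l$, so that $Q^{}_{i+1,i+1}$ is determined by $Q^{}_{ii}$.

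The next step is to conjugate $Q$ by an invertible element $Z$ of $\{T\}^{\prime}$ so that every diagonal block $Q^{}_{ii}$ becomes a diagonal projection. Since any such $Z$ is block upper triangular, $(ZQZ^{-1})^{}_{ii}=Z^{}_{ii}Q^{}_{ii}Z^{-1}_{ii}$, so one wants $Z$ whose $(1,1)$ block $Z^{}_{11}$ is the diagonalization of $Q^{}_{11}$ supplied by Lemma $2.3$; the twisting relations then prescribe the remaining $Z^{}_{ii}$, and the off-diagonal blocks of $Z$ must be filled in so that $Z$ commutes with $T$. The payoff of this normalization is that once the $Q^{}_{ii}$ are diagonal projections, the relations $(Q^{}_{i+1,i+1})^{}_{kk}A^{k}_{i,i+1}=(Q^{}_{ii})^{}_{kk}A^{k}_{i,i+1}$ together with $\mathrm{supp}(A^{k}_{i,i+1})=\Lambda$ force $Q^{}_{i+1,i+1}=Q^{}_{ii}$ a.e.~$[\mu]$, so all diagonal blocks coincide and the block-diagonal part $D:=\mathrm{diag}(Q^{}_{11},\ldots,Q^{}_{nn})$ commutes with $T$ (its equal diagonal blocks commute entrywise with every $T^{}_{ii^{\prime}}$). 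Hence $R:=Q-D$ also lies in $\{T\}^{\prime}$, is strictly block upper triangular, satisfies $R^{n}=0$, and lies in a nilpotent ideal of the Banach algebra $\{T\}^{\prime}$, so $\sigma^{}_{\{T\}^{\prime}}(SR)=\{0\}$ for every $S\in\{T\}^{\prime}$. Repeating the computation $(3.10)$--$(3.13)$, the identity $(2Q-I)(2Q-I-R)=I-(2Q-I)R$ shows $2Q-I-R$ is invertible in $\{T\}^{\prime}$, and $(Q-R)(2Q-I-R)=(2Q-I-R)Q$ shows $Q$ is similar via an invertible element of $\{T\}^{\prime}$ to $Q-R=D$, a diagonal projection. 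Conjugating back by $U^{*}$ gives the required invertible operator in $\{\sum^{m}_{k=1}\oplus A^{}_{k}\}^{\prime}\cap M^{}_{mn}(L^{\infty}(\mu))$ transforming $P$ to a diagonal projection.

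The main obstacle is constructing the invertible $Z\in\{T\}^{\prime}$ in the second step: a generic operator diagonalizing $Q^{}_{11}$ does not commute with $T$, and the entries that the twisting relations force on the deeper diagonal blocks $Z^{}_{ii}$ involve dividing by the functions $A^{k}_{i,i+1}$, which are supported on $\Lambda$ but need not be bounded away from $0$. The remedy, exactly as in the proof of Lemma $2.3$ and in the treatment of $(3.14)$--$(3.15)$, is to precede each Gaussian-elimination step by a conjugation with a measurable row-switching unitary chosen so that the pivot entry is bounded below; the twisting relations $(Q^{}_{ii})^{}_{kl}A^{k}_{i,i+1}=(Q^{}_{i+1,i+1})^{}_{kl}A^{l}_{i,i+1}$ then force the twisted elementary factors to be bounded, so the induced diagonal blocks of $Z$ lie in $M^{}_{m}(L^{\infty}(\mu))$, and a conjugate $V^{*}GV$ of a Gaussian factor $G$ by a permutation $V$ stays in $\{T\}^{\prime}$ because $V$ interchanges the two triangular forms while the twisting relations make both compatible with $T$ -- the generalization of the verification $V^{*}YV\in\{UAU^{*}\}^{\prime}$ in Example $3.1$. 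I expect it is cleanest to organize the whole step as an induction on $n$: splitting off the first inner coordinate writes $T=\left(\begin{smallmatrix}dI^{}_{m}&W\\0&T^{\prime\prime}\end{smallmatrix}\right)$ with $T^{\prime\prime}$ of the same form in size $n-1$, applies the induction hypothesis to the bottom-right corner of $Q$ relative to $T^{\prime\prime}$, and then lifts the resulting conjugator to $\{T\}^{\prime}$ by solving a triangular system for the off-diagonal blocks, whose solvability rests on the boundedness produced by the pivot preparation. Throughout, the one thing that must be checked at every stage is that the intermediate conjugators remain inside the relative commutant while staying norm-bounded.
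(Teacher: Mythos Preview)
Your overall architecture matches the paper's proof exactly: conjugate by the permutation $U$ so that $Q=UPU^{*}$ is block upper triangular with idempotent diagonal blocks $Q_{ii}\in M_{m}(L^{\infty}(\mu))$; conjugate within the relative commutant so that every $Q_{ii}$ becomes diagonal; observe that the resulting block--diagonal part $R_{1}$ lies in the commutant and that $R=Q-R_{1}$ is quasinilpotent there; and finish with the $(2Q-I-R)$ trick. The one substantive point where you diverge from the paper is the construction of the conjugator $Z$, which you flag as the ``main obstacle'' and propose to handle by propagating $Z_{11}$ to $Z_{ii}$ via the twisting relations, then ``filling in'' off-diagonal blocks by solving triangular systems. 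This is where your argument is incomplete, and where the paper has a cleaner device that sidesteps the difficulty entirely.

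The paper does not build $Z$ block by block in the permuted basis. Instead it returns to the \emph{original} $m\times m$ block decomposition and defines $Y$ there: $Y_{kk}=I_{n}$, $Y_{k1}=(P^{11}_{11})^{-1}P_{k1}$ for $k\geq 2$ (the \emph{entire} upper-triangular block $P_{k1}$ scaled by the single $L^{\infty}$ function $(P^{11}_{11})^{-1}$), and all other $Y_{kl}=0$. Because $P_{k1}$ already satisfies $A_{k}P_{k1}=P_{k1}A_{1}$ and scalars in $L^{\infty}(\mu)$ are central, $Y$ lies in $\{\bigoplus_{k}A_{k}\}'\cap M_{mn}(L^{\infty}(\mu))$ automatically---there is nothing to ``fill in''. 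Transporting back, $X_{1}=UYU^{*}$ is block upper triangular with diagonal blocks exactly the Gaussian elimination matrices $X_{ii}$ of $(3.33)$, and the off-diagonal blocks are simply the off-diagonal entries $P^{k1}_{ij}/P^{11}_{11}$ of the scaled $P_{k1}$, all bounded because $P$ is. No division by $A^{k}_{i,i+1}$ ever occurs; your worry about unbounded twisted factors is an artefact of trying to construct $Z$ in the wrong coordinate system. Once you see this, the iteration through the remaining pivots of Lemma~$2.3$ goes through verbatim, and your third paragraph (the induction on $n$, lifting conjugators by solving triangular systems) is unnecessary.
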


\begin{proof}
Let $P$ be an idempotent in $\{\sum^{m}_{k=1}\oplus
A^{}_{k}\}^{\prime}\cap M^{}_{mn^{}_{}}(L^{\infty}(\mu))$ of the
form
$$P=\begin{pmatrix}
P^{}_{11}&P^{}_{12}&\cdots&P^{}_{1m}\\
P^{}_{21}&P^{}_{22}&\cdots&P^{}_{2m}\\
\vdots&\vdots&\ddots&\vdots\\
P^{}_{m1}&P^{}_{m2}&\cdots&P^{}_{mm}\\
\end{pmatrix}, \eqno{(3.28)}$$ where $P^{}_{kl}\in
M^{}_{n^{}_{}}(L^{\infty}(\mu))$ and
$A^{}_{k}P^{}_{kl}=P^{}_{kl}A^{}_{l}$ for $k,l=1,\ldots,m$. By Lemma
$3.2$ and Lemma $3.3$, we obtain that $P^{}_{kl}$ is of the upper
triangular form
$$P^{}_{kl}=\begin{pmatrix}
P^{kl}_{11}&P^{kl}_{12}&\cdots&P^{kl}_{1n}\\
0&P^{kl}_{22}&\cdots&P^{kl}_{2n}\\
\vdots&\vdots&\ddots&\vdots\\
0&0&\cdots&P^{kl}_{nn}\\
\end{pmatrix}^{}_{n\times n}
\begin{matrix}
L^{2}(\mu)\\
L^{2}(\mu)\\
\vdots\\
L^{2}(\mu)\\
\end{matrix}, \eqno{(3.29)}$$ where we treat $P^{kl}_{ij}$ as an
element in $L^{\infty}(\mu)$ for $k,l=1,\ldots,m$ and
$i,j=1,\ldots,n$. The following facts are important:
\begin{enumerate}
\item the equality $P^{kk}_{ii}=P^{kk}_{11}$ holds for
$k=1,\ldots,m$ and $i=1,\dots,n$;
\item the equality
$$A^{k}_{i,i+1}P^{kl}_{i+1,i+1}=P^{kl}_{ii}A^{l}_{i,i+1}
\eqno{(3.30)}$$ holds for $k,l=1,\ldots,m$ and $i=1,\ldots,n-1$.
Thus by the assumption that $A^{k}_{i,i+1}$ is supported on
$\Lambda$ for $k=1,\ldots,m$ and $i=1,\ldots,n-1$, the equality
$(3.30)$ yields that if $P^{kl}_{ii}=0$ for some $i$ then each main
diagonal entry of $P^{}_{kl}$ is $0$.
\end{enumerate}

There exists a unitary operator $U$ in
$M^{}_{mn^{}_{}}(L^{\infty}(\mu))$ which is a composition of
finitely many row-switching transformations such that $UPU^{*}(=Q)$
as an element in $\{U(\sum^{m}_{k=1}\oplus
A^{}_{k})U^{*}\}^{\prime}\cap M^{}_{mn^{}_{}}(L^{\infty}(\mu))$ is
of the upper triangular form
$$Q=\begin{pmatrix}
Q^{}_{11}&Q^{}_{12}&\cdots&Q^{}_{1n}\\
0&Q^{}_{22}&\cdots&Q^{}_{2n}\\
\vdots&\vdots&\ddots&\vdots\\
0&0&\cdots&Q^{}_{nn}\\
\end{pmatrix}, \eqno{(3.31)}$$ where for $i,j=1,\ldots,n$, every
$Q^{}_{ij}$ is of the form
$$Q^{}_{ij}=\begin{pmatrix}
P^{11}_{ij}&P^{12}_{ij}&\cdots&P^{1m}_{ij}\\
P^{21}_{ij}&P^{22}_{ij}&\cdots&P^{2m}_{ij}\\
\vdots&\vdots&\ddots&\vdots\\
P^{m1}_{ij}&P^{m2}_{ij}&\cdots&P^{mm}_{ij}\\
\end{pmatrix}^{}_{m\times m}
\begin{matrix}
L^{2}(\mu)\\
L^{2}(\mu)\\
\vdots\\
L^{2}(\mu)\\
\end{matrix}. \eqno{(3.32)}$$ Note that $Q^{}_{ii}$ is an idempotent
in $M^{}_{m^{}_{}}(L^{\infty}(\mu))$ for $i=1,\ldots,n$.

We treat Borel representatives as elements in $L^{\infty}(\mu)$.
Without loss of generality, we assume that $\mathrm{Tr}(Q^{}_{11})$
defined as in $(2.25)$ takes a constant $r>0$ on $\Lambda$. By the
proof of Lemma $2.3$, we may further assume that
$|P^{11}_{11}(\lambda)|\geq rn^{-1}$ for every $\lambda$ in
$\Lambda$. Therefore we construct an invertible element $X^{}_{ii}$
in $M^{}_{m^{}_{}}(L^{\infty}(\mu))$ for $i=1,\ldots,n$ of the form
$$X^{}_{ii}=\begin{pmatrix}
\mathbf{1}&0&\cdots&0\\
P^{21}_{ii}\slash P^{11}_{ii}&\mathbf{1}&\cdots&0\\
\vdots&\vdots&\ddots&\vdots\\
P^{m1}_{ii}\slash P^{11}_{ii}&0&\cdots&\mathbf{1}\\
\end{pmatrix}^{}_{m\times m}
\begin{matrix}
L^{2}(\mu)\\
L^{2}(\mu)\\
\vdots\\
L^{2}(\mu)\\
\end{matrix}. \eqno{(3.33)}$$
Therefore by the proof of Lemma $2.3$, the $(1,1)$ entry of
$X^{}_{ii}Q^{}_{ii}X^{-1}_{ii}$ is $\mathbf{1}$ and the $(k,1)$
entries of $X^{}_{ii}Q^{}_{ii}X^{-1}_{ii}$ are $0$s for
$k=2,\ldots,m$.

Based on the preceding discussion, let $Y$ in
$M^{}_{mn^{}_{}}(L^{\infty}(\mu))$ be of the form
$$Y=\begin{pmatrix}
Y^{}_{11}&Y^{}_{12}&\cdots&Y^{}_{1m}\\
Y^{}_{21}&Y^{}_{22}&\cdots&Y^{}_{2m}\\
\vdots&\vdots&\ddots&\vdots\\
Y^{}_{m1}&Y^{}_{m2}&\cdots&Y^{}_{mm}\\
\end{pmatrix}, \eqno{(3.34)}$$ where $Y^{}_{kl}\in
M^{}_{n^{}_{}}(L^{\infty}(\mu))$ and
$A^{}_{k}Y^{}_{kl}=Y^{}_{kl}A^{}_{l}$ for $k,l=1,\ldots,m$ such
that:
\begin{enumerate}
\item $Y^{}_{kk}$ equals the identity of
$M^{}_{n^{}_{}}(L^{\infty}(\mu))$ for $k=1,\ldots,m$;
\item  $Y^{k1}_{ij}=P^{k1}_{ij}\slash P^{11}_{11}$
for every entry $Y^{k1}_{ij}$ of $Y^{}_{k1}$ and $k=2,\ldots,m$;
\item other $Y^{}_{kl}$s not mentioned in the first two items are
$0$s for $k,l=1,\ldots,m$.
\end{enumerate} Note that every $Y^{}_{kl}\in
M^{}_{n^{}_{}}(L^{\infty}(\mu))$ is of the upper triangular form for
$k,l=1,\ldots,m$. Then $Y$ is an invertible element in
$\{\sum^{m}_{k=1}\oplus A^{}_{k}\}^{\prime}\cap
M^{}_{mn^{}_{}}(L^{\infty}(\mu))$. We write $X^{}_{1}=UYU^{*}$ for
$U$ in the discussion preceding $(3.31)$. Then $X^{}_{1}$ is of the
upper triangular form
$$X^{}_{1}=\begin{pmatrix}
X^{}_{11}&X^{}_{12}&\cdots&X^{}_{1n}\\
0&X^{}_{22}&\cdots&X^{}_{2n}\\
\vdots&\vdots&\ddots&\vdots\\
0&0&\cdots&X^{}_{nn}\\
\end{pmatrix}, \eqno{(3.35)}$$ where $X^{}_{ii}$ is in the form of
$(3.33)$ for $i=1,\ldots,n$. Therefore, $X^{}_{1}$ is invertible in
$\{U(\sum^{m}_{k=1}\oplus A^{}_{k})U^{*}\}^{\prime}\cap
M^{}_{mn^{}_{}}(L^{\infty}(\mu))$. In $X^{}_{1}QX^{-1}_{1}$, the
$(i,i)$ block entry $X^{}_{ii}Q^{}_{ii}X^{-1}_{ii}$ possesses the
property mentioned in the discussion preceding $(3.34)$. By a
similar proof of Lemma $2.3$ and iterating the preceding discussion,
we obtain an invertible element $X$ in $\{U(\sum^{m}_{k=1}\oplus
A^{}_{k})U^{*}\}^{\prime}\cap M^{}_{mn^{}_{}}(L^{\infty}(\mu))$ such
that $XQX^{-1}$ is of the upper triangular form
$$XQX^{-1}=\begin{pmatrix}
R^{}_{11}&R^{}_{12}&\cdots&R^{}_{1n}\\
0&R^{}_{22}&\cdots&R^{}_{2n}\\
\vdots&\vdots&\ddots&\vdots\\
0&0&\cdots&R^{}_{nn}\\
\end{pmatrix} \eqno{(3.36)}$$ where $R^{}_{ii}$ is diagonal for
$i=1,\ldots,n$. Thus $R^{}_{ii}=R^{}_{jj}$ for $i,j=1,\ldots,n$.
Note that the diagonal matrix
$$R^{}_{1}=\begin{pmatrix}
R^{}_{11}&0&\cdots&0\\
0&R^{}_{22}&\cdots&0\\
\vdots&\vdots&\ddots&\vdots\\
0&0&\cdots&R^{}_{nn}\\
\end{pmatrix} \eqno{(3.37)}$$ is in $\{U(\sum^{m}_{k=1}\oplus
A^{}_{k})U^{*}\}^{\prime}\cap M^{}_{mn^{}_{}}(L^{\infty}(\mu))$.
Therefore $XQX^{-1}-R^{}_{1}(=R)$ is in $\{U(\sum^{m}_{k=1}\oplus
A^{}_{k})U^{*}\}^{\prime}\cap M^{}_{mn^{}_{}}(L^{\infty}(\mu))$. For
every element $S$ in $\mathfrak{A}$, we obtain that
$\sigma^{}_{\mathfrak{A}}(SR)=\sigma^{}_{\mathfrak{A}}(RS)=\{0\}$,
where $\mathfrak{A}=\{U(\sum^{m}_{k=1}\oplus
A^{}_{k})U^{*}\}^{\prime}\cap M^{}_{mn^{}_{}}(L^{\infty}(\mu))$.
Therefore the equality
$(I-2XQX^{-1})(I-2XQX^{-1}+R)=I+(I-2XQX^{-1})R$ yields that
$I-2XQX^{-1}+R$ is invertible in $\{U(\sum^{m}_{k=1}\oplus
A^{}_{k})U^{*}\}^{\prime}\cap M^{}_{mn^{}_{}}(L^{\infty}(\mu))$,
since $(I-2XQX^{-1})^{2}=I$. Hence the equality
$$\begin{array}{ccl}
(XQX^{-1}-R)^{2}&=&XQX^{-1}+R^{2}-RXQX^{-1}-XQX^{-1}R\\
&=&XQX^{-1}-R\\
\end{array}  \eqno{(3.38)}$$ yields that
$R^{2}+R-RXQX^{-1}-XQX^{-1}R=0$, and we obtain
$$\begin{array}{rcl}
&&(XQX^{-1}-R)(I-2XQX^{-1}+R)\\
&=&2RXQX^{-1}-XQX^{-1}+XQX^{-1}R-R-R^{2}\\
&=&RXQX^{-1}-XQX^{-1}\\
&=&(R+I-2XQX^{-1})XQX^{-1}.\\
\end{array} \eqno{(3.39)}$$ This means that $XQX^{-1}$ is similar to
$XQX^{-1}-R(=R^{}_{1})$ in the relative commutant
$\{U(\sum^{m}_{k=1}\oplus A^{}_{k})U^{*}\}^{\prime}\cap
M^{}_{mn^{}_{}}(L^{\infty}(\mu))$. Thus $Z=U^{*}(R+I-2XQX^{-1})XU$
is the required invertible element in $\{\sum^{m}_{k=1}\oplus
A^{}_{k}\}^{\prime}\cap M^{}_{mn^{}_{}}(L^{\infty}(\mu))$ such that
$ZPZ^{-1}$ is diagonal.
\end{proof}

By the preceding lemma and the proof of Lemma $2.6$ we obtain the
following proposition.

\begin{proposition}
For $k=1,\ldots,m$, let $A^{}_{k}=(A^{k}_{ij})^{}_{1\leq i,j\leq n}$
be an upper triangular operator-valued matrix in
$M^{}_{n^{}_{}}(L^{\infty}(\mu))$ such that
\begin{enumerate}
\item $A^{k}_{ii}=A^{l}_{jj}$ for $k,l=1,\ldots,m$ and
$i,j=1,\ldots,n$;
\item $A^{k}_{i,i+1}$ is supported on
$\Lambda$ for $k=1,\ldots,m$ and $i=1,\ldots,n-1$.
\end{enumerate}
If $\mathscr{P}$ and $\mathscr{Q}$ are two bounded maximal abelian
sets of idempotents in the relative commutant
$\{\sum^{m}_{k=1}\oplus A^{}_{k}\}^{\prime}\cap
M^{}_{mn^{}_{}}(L^{\infty}(\mu))$, then there exists an invertible
operator $X$ in $\{\sum^{m}_{k=1}\oplus A^{}_{k}\}^{\prime}\cap
M^{}_{mn^{}_{}}(L^{\infty}(\mu))$ such that
$X\mathscr{P}X^{-1}=\mathscr{Q}$.
\end{proposition}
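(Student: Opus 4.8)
Set $\mathfrak{A}:=\{\sum^{m}_{k=1}\oplus A^{}_{k}\}^{\prime}\cap M^{}_{mn^{}_{}}(L^{\infty}(\mu))$ and let $\mathscr{E}$ denote the set of diagonal projections in $\mathfrak{A}$. The plan is to prove that every bounded maximal abelian set of idempotents of $\mathfrak{A}$ is carried onto $\mathscr{E}$ by some invertible element of $\mathfrak{A}$; applying this to $\mathscr{P}$ and to $\mathscr{Q}$ and composing the two conjugators then yields the assertion. Note first that $\mathscr{E}$ is a bounded abelian set of idempotents containing the identity and the set $\mathscr{E}^{}_{mn}$ of central projections, and that, by Lemma $3.2$ and Lemma $3.3$, every element of $\mathfrak{A}$ is block upper triangular with all blocks upper triangular and all diagonal blocks of constant main diagonal; consequently a diagonal idempotent belongs to $\mathfrak{A}$ exactly when it has the form $\sum^{m}_{k=1}\oplus e^{}_{k}I^{}_{n}$ with each $e^{}_{k}$ a characteristic function in $L^{\infty}(\mu)$.

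The first main step is to diagonalize an arbitrary finite frame $\{P^{}_{1},\dots,P^{}_{r}\}$ of $\mathfrak{A}$ by a single invertible element of $\mathfrak{A}$. Apply Lemma $3.4$ to $P^{}_{1}$ to obtain an invertible $X^{}_{1}\in\mathfrak{A}$ with $D^{}_{1}:=X^{}_{1}P^{}_{1}X^{-1}_{1}$ diagonal; conjugation by $X^{}_{1}$ leaves $\sum^{}_{k}\oplus A^{}_{k}$ unchanged, and since $P^{}_{1}P^{}_{j}=0$ for $j\geq 2$ the idempotents $X^{}_{1}P^{}_{j}X^{-1}_{1}$ ($j\geq 2$) form a finite frame of the corner $(I-D^{}_{1})\mathfrak{A}(I-D^{}_{1})$. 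Writing $D^{}_{1}=\sum^{}_{k}\oplus e^{}_{k}I^{}_{n}$ and choosing a finite Borel partition of $\Lambda$ on each piece of which the list of indices $k$ with $e^{}_{k}\neq 1$ is constant, this corner is, on each piece, of exactly the form treated in Lemma $3.4$ (a direct sum of upper triangular matrices whose diagonal entries are all equal and whose $1$-diagonal entries have full support there). Applying Lemma $3.4$ on each piece and reassembling, exactly as in the proof of Lemma $2.6$, I peel off $P^{}_{2},\dots,P^{}_{r}$ one at a time and obtain after $r-1$ steps an invertible $X\in\mathfrak{A}$ with every $XP^{}_{j}X^{-1}$ a diagonal projection, hence an element of $\mathscr{E}$.

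Now let $\mathscr{P}$ be a bounded maximal abelian set of idempotents in $\mathfrak{A}$. Since $\mathscr{P}$ is closed under the Boolean operations, it contains the set $\langle\mathscr{P}^{}_{0},\mathscr{E}^{}_{mn}\rangle$ of idempotents generated by a finite frame $\mathscr{P}^{}_{0}\subseteq\mathscr{P}$ (obtained from the construction in the proof of Proposition $2.5$) together with the central projections $\mathscr{E}^{}_{mn}\subseteq\mathscr{P}$; as the latter set is itself maximal abelian by Proposition $2.5$, in fact $\mathscr{P}=\langle\mathscr{P}^{}_{0},\mathscr{E}^{}_{mn}\rangle$. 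Taking the $X$ of the previous step for the frame $\mathscr{P}^{}_{0}$, the set $X\mathscr{P}X^{-1}=\langle X\mathscr{P}^{}_{0}X^{-1},\mathscr{E}^{}_{mn}\rangle$ consists of diagonal idempotents of $\mathfrak{A}$, so $X\mathscr{P}X^{-1}\subseteq\mathscr{E}$; since $X\mathscr{P}X^{-1}$ is again maximal abelian and $\mathscr{E}$ is abelian, $X\mathscr{P}X^{-1}=\mathscr{E}$. Carrying this out for both $\mathscr{P}$ and $\mathscr{Q}$, with invertibles $X,Y\in\mathfrak{A}$, the element $Y^{-1}X\in\mathfrak{A}$ satisfies $(Y^{-1}X)\mathscr{P}(Y^{-1}X)^{-1}=\mathscr{Q}$.

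I expect the main obstacle to be the inductive diagonalization of the frame in the first step, namely checking that after removing the already-diagonalized idempotents the remaining corner $(I-\sum_{j}D^{}_{j})\mathfrak{A}(I-\sum_{j}D^{}_{j})$ still falls under the hypotheses of Lemma $3.4$. This is precisely the point that forces the localization onto a finite Borel partition of $\Lambda$ (the characteristic functions $e^{}_{k}$ cutting down the separate summands need not coincide) followed by recombination, in parallel with the recursive construction in the proof of Lemma $2.6$; once this bookkeeping is in place the rest of the argument is formal.
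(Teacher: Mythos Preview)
Your proposal is correct and follows essentially the same route as the paper's proof: fix the set $\mathscr{D}$ (your $\mathscr{E}$) of diagonal projections as the target, extract a finite frame $\mathscr{P}_{0}\subseteq\mathscr{P}$ via Proposition~2.5, diagonalize the frame by iterating Lemma~3.4 along the recursive scheme of Lemma~2.6, and then use maximality to conclude $X\mathscr{P}X^{-1}=\mathscr{D}$. The paper compresses your inductive corner argument into the single phrase ``combining Lemma~3.4 and the proof of Lemma~2.6''; your explicit localization onto a Borel partition where the characteristic functions $e_k$ are constant is exactly the bookkeeping hidden in that phrase.
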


\begin{proof}
Denote by $\mathscr{D}^{}_{}$ the set of diagonal projections in
$\{\sum^{m}_{k=1}\oplus A^{}_{k}\}^{\prime}\cap
M^{}_{mn^{}_{}}(L^{\infty}(\mu))$. We can verify that
$\mathscr{D}^{}_{}$ is a bounded maximal abelian set of idempotents
in the relative commutant $\{\sum^{m}_{k=1}\oplus
A^{}_{k}\}^{\prime}\cap M^{}_{mn^{}_{}}(L^{\infty}(\mu))$. By
Proposition $2.5$, there exists a finite frame $\mathscr{P}^{}_{0}$
in $\mathscr{P}^{}_{}$. Thus combining Lemma $3.4$ and the proof of
Lemma $2.6$, we obtain an invertible operator $X$ in
$\{\sum^{m}_{k=1}\oplus A^{}_{k}\}^{\prime}\cap
M^{}_{mn^{}_{}}(L^{\infty}(\mu))$ such that every element in
$X\mathscr{P}^{}_{0}X^{-1}$ is diagonal. Furthermore, every element
in $X\mathscr{P}^{}_{}X^{-1}$ is diagonal. Therefore
$X\mathscr{P}^{}_{}X^{-1}\subseteq\mathscr{D}$. The maximality of
$\mathscr{P}^{}_{}$ yields the equality
$X\mathscr{P}^{}_{}X^{-1}=\mathscr{D}$. By the same way, there
exists an invertible operator $Y$ in $\{\sum^{m}_{k=1}\oplus
A^{}_{k}\}^{\prime}\cap M^{}_{mn^{}_{}}(L^{\infty}(\mu))$ such that
$Y\mathscr{Q}^{}_{}Y^{-1}=\mathscr{D}$. Therefore $\mathscr{P}$ and
$\mathscr{Q}$ are similar to each other in $\{\sum^{m}_{k=1}\oplus
A^{}_{k}\}^{\prime}\cap M^{}_{mn^{}_{}}(L^{\infty}(\mu))$.
\end{proof}

In the following lemma, we pay attention to another case different
from the one mentioned in Lemma $3.4$.

\begin{lemma}
For $k=1,\ldots,m$, let $A^{}_{k}=(A^{k}_{ij})^{}_{1\leq i,j\leq
n^{}_{k}}$ be an upper triangular operator-valued matrix in
$M^{}_{n^{}_{k}}(L^{\infty}(\mu))$ such that
\begin{enumerate}
\item $A^{k}_{ii}=A^{l}_{jj}$ for $k,l=1,\ldots,m$,
$i=1,\ldots,n^{}_{k}$ and $j=1,\ldots,n^{}_{l}$;
\item $A^{k}_{i,i+1}$ is supported on
$\Lambda$ for $k=1,\ldots,m$ and $i=1,\ldots,n^{}_{k}-1$;
\item $n^{}_{1}>\cdots>n^{}_{m}$ and
$\sum^{m}_{k=1}n^{}_{k}=r$.
\end{enumerate}
If $P$ is an idempotent in $\{\sum^{m}_{k=1}\oplus
A^{}_{k}\}^{\prime}\cap M^{}_{r^{}_{}}(L^{\infty}(\mu))$, then there
exists an invertible operator $X$ in $\{\sum^{m}_{k=1}\oplus
A^{}_{k}\}^{\prime}\cap M^{}_{r^{}_{}}(L^{\infty}(\mu))$ such that
$XPX^{-1}$ is diagonal.
\end{lemma}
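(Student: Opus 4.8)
The plan is to carry over the proof of Lemma $3.4$, the main change being that the fine diagonal blocks obtained after re-grading no longer all have the same size, which forces the Gaussian-type elimination to be run coarse block by coarse block. Write $B=\sum_{k=1}^m\oplus A_k$ and index the standard basis of $(L^2(\mu))^{(r)}$ by pairs $(k,i)$ with $1\le k\le m$ and $1\le i\le n_k$. The key device is the staircase grading $\ell(k,i)=i-n_k\in\{-(n_1-1),\dots,0\}$: if $B'=(B'_{kl})\in\{B\}'\cap M_r(L^\infty(\mu))$, so $A_kB'_{kl}=B'_{kl}A_l$, then applying Lemma $3.2$ to the diagonal coarse blocks and Lemma $3.3$ to the off-diagonal ones --- the chain $n_1>\cdots>n_m$ is exactly what identifies, for $k<l$, the larger and the smaller of $A_k,A_l$ --- gives $(B')^{kl}_{ij}=0$ whenever $\ell(k,i)>\ell(l,j)$, together with $(B')^{kk}_{ii}=(B')^{kk}_{11}$ for every $k$ and $i$. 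Thus every element of $\{B\}'\cap M_r(L^\infty(\mu))$ is upper triangular for $\ell$.

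Next I would re-grade: let $U$ be the row-switching unitary reordering the basis so that the indices are grouped by the value of $\ell$ in increasing order, the $v$-th group having cardinality $K(v)=\#\{k:n_k\ge 1-v\}$ (non-decreasing in $v$, with $K(0)=m$), and set $\mathfrak A=\{UBU^*\}'\cap M_r(L^\infty(\mu))$. Then $Q:=UPU^*$ is block upper triangular for the level decomposition, and since a block upper triangular operator whose square equals itself has idempotent diagonal blocks, each diagonal block $Q_v\in M_{K(v)}(L^\infty(\mu))$ is an idempotent; likewise every element of $\mathfrak A$ is block upper triangular. After partitioning $\Lambda$ into finitely many Borel pieces and treating each separately, as in the proof of Lemma $2.6$, I may assume every $\mathrm{Tr}(Q_v)$ is constant on $\Lambda$.

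The heart of the argument is the simultaneous diagonalisation of all the $Q_v$ by conjugation inside $\mathfrak A$, done via the elimination algorithm of Lemma $2.3$/Lemma $2.6$ run one coarse block at a time. Whenever a coarse block $p$ has $|P^{pp}_{11}|$ bounded away from $0$ on $\Lambda$ --- arranged, after further slicing and a coarse-block relabelling, exactly as in Lemma $3.4$, using $\mathrm{Tr}(Q_v)=\sum_{k\le K(v)}P^{kk}_{11}$ --- I conjugate by $Y=I+N$ in $\{B\}'\cap M_r(L^\infty(\mu))$ whose only nonzero off-diagonal blocks are $N_{kp}=P_{kp}(P^{pp}_{11})^{-1}$, $k\ne p$: one checks $N^2=0$ (so $Y$ is invertible) and $A_kN_{kp}=N_{kp}A_p$ (so $Y\in\{B\}'$), and in the re-graded picture $UYU^*$ clears the $p$-th column of $Q_v$ for every level $v$ containing block $p$, coherently across those levels because the single scalar $P^{pp}_{11}$ equals $(Q_v)_{pp}=P^{pp}_{v+n_p,v+n_p}$ for all such $v$ by Lemma $3.2$. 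Iterating over the coarse blocks --- the largest one, present in every level, first, then the staircase of smaller ones --- together with the analogue, as in $(2.9)$ and $(2.16)$, clearing the remaining top-right entries, produces an invertible $X\in\mathfrak A$ with $XQX^{-1}$ still block upper triangular and all its diagonal blocks $R_v$ diagonal projections.

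To finish: setting $P':=U^*(XQX^{-1})U\in\{B\}'\cap M_r(L^\infty(\mu))$ and applying the first paragraph, the $k$-th diagonal entry of $R_v$ equals the characteristic function $(P')^{kk}_{11}$ for every level $v$ containing block $k$, so $R_1:=\bigoplus_v R_v$ corresponds under $U$ to $\bigoplus_k((P')^{kk}_{11}I_{n_k})$, which commutes with $B$; hence $R_1\in\mathfrak A$. With $R:=XQX^{-1}-R_1$ (the strictly upper part), $SR$ is strictly block upper triangular for every $S\in\mathfrak A$, hence nilpotent, so $\sigma_{\mathfrak A}(SR)=\sigma_{\mathfrak A}(RS)=\{0\}$; the identities $(3.38)$--$(3.39)$ then show $XQX^{-1}$ is similar in $\mathfrak A$ to the diagonal operator $R_1$, and composing all the conjugations, conjugating back by $U$, and reassembling the Borel pieces gives the required invertible element of $\{B\}'\cap M_r(L^\infty(\mu))$. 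I expect the elimination step to be the main obstacle: in Lemma $3.4$ the fine diagonal blocks are all of size $m$, so one operator of the form $(3.34)$ clears a column in all of them at once, whereas here a coarse block $p$ is present only in the levels $v\ge 1-n_p$, so one must verify that clearing coarse-column $p$ is at once compatible with the idempotent structure of every $Q_v$ it meets and still assembles, over the whole staircase, into a single invertible element of the relative commutant, and that the successive pivot choices require only finitely many further subdivisions of $\Lambda$ --- bookkeeping, rather than a new idea, but the substantive part of the proof.
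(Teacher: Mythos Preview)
Your proposal is correct and follows the same overall architecture as the paper --- re-grade by a row-switching unitary, identify a diagonal projection $Q_1$ in the re-graded commutant, and use the $(3.38)$--$(3.39)$ trick to pass from $XQX^{-1}$ to $Q_1$ --- but you take a longer road to reach $Q_1$ than the paper does.

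The difference is the choice of grading. You use $\ell(k,i)=i-n_k$, which gives only block upper triangularity and forces you into the coarse-block Gaussian elimination you describe. The paper instead grades by $\ell(k,i)=i$ (ordering within each level by $k$); with the strict chain $n_1>\cdots>n_m$, Lemma~$3.3$ then gives not merely block upper triangularity but \emph{scalar} upper triangularity of the re-graded matrix $Q$: within level $i$ the $(k,l)$ entry is $P^{kl}_{ii}$, and for $k>l$ one has $n_k<n_l$, so the ``$D$'' half of Lemma~$3.3$ forces $P^{kl}_{ii}=0$. Consequently each diagonal entry $P^{kk}_{ii}=P^{kk}_{11}$ is already a characteristic function, the scalar diagonal $Q_1=\mathrm{diag}(P^{kk}_{ii})$ corresponds under $U$ to $\bigoplus_k P^{kk}_{11}I_{n_k}\in\{B\}'$, and $R=Q-Q_1$ is strictly (scalar) upper triangular. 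The spectral radius argument and $(3.38)$--$(3.39)$ then apply immediately, with no elimination step at all --- the paper's proof of Lemma~$3.6$ is essentially a two-line observation once this grading is in hand (it treats only $m=2$, $n_1=3$, $n_2=2$ explicitly, but the general case is identical).

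So: your argument works, and the bookkeeping you flag is manageable, but the entire elimination phase is avoidable. If you re-grade by $i$ rather than by $i-n_k$, the ``substantive part of the proof'' you anticipate simply disappears.
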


\begin{proof}
For the sake of simplicity, we only prove this lemma for $m=2$ and
$A^{}_{1}$, $A^{}_{2}$ are of the forms
$$A^{}_{1}=\begin{pmatrix}
A^{1}_{11}&A^{1}_{12}&A^{1}_{13}\\
0&A^{1}_{22}&A^{1}_{23}\\
0&0&A^{1}_{33}\\
\end{pmatrix}
\begin{matrix}
L^{2}(\mu)\\
L^{2}(\mu)\\
L^{2}(\mu)\\
\end{matrix},\quad
A^{}_{2}=\begin{pmatrix}
A^{2}_{11}&A^{2}_{12}\\
0&A^{2}_{22}\\
\end{pmatrix}
\begin{matrix}
L^{2}(\mu)\\
L^{2}(\mu)\\
\end{matrix}. \eqno{(3.40)}$$
If $P$ is an idempotent in the relative commutant $\{A^{}_{1}\oplus
A^{}_{2}\}^{\prime}\cap M^{}_{5^{}_{}}(L^{\infty}(\mu))$, then by
Lemma $3.2$ and Lemma $3.3$, $P$ is of the form
$$P=\left(\begin{array}{ccc|cc}
P^{11}_{11}&P^{11}_{12}&P^{11}_{13}&P^{12}_{11}&P^{12}_{12}\\
0&P^{11}_{22}&P^{11}_{23}&0&P^{12}_{22}\\
0&0&P^{11}_{33}&0&0\\
\hline
0&P^{21}_{12}&P^{21}_{13}&P^{22}_{11}&P^{22}_{12}\\
0&0&P^{21}_{23}&0&P^{22}_{22}\\
\end{array}\right)
\begin{matrix}
L^{2}(\mu)\\
L^{2}(\mu)\\
L^{2}(\mu)\\
L^{2}(\mu)\\
L^{2}(\mu)\\
\end{matrix}. \eqno{(3.41)}$$ There exists a unitary element $U$ in
$M^{}_{5^{}_{}}(L^{\infty}(\mu))$ which is a composition of finitely
many row-switching transformations such that $UPU^{*}(=Q)$ as an
element in the relative commutant $\mathfrak{A}=\{U(A^{}_{1}\oplus
A^{}_{2})U^{*}\}^{\prime}\cap M^{}_{5^{}_{}}(L^{\infty}(\mu))$ is of
the form
$$Q=\left(\begin{array}{cc|cc|c}
P^{11}_{11}&P^{12}_{11}&P^{11}_{12}&P^{12}_{12}&P^{11}_{13}\\
0&P^{22}_{11}&P^{21}_{12}&P^{22}_{12}&P^{21}_{13}\\
\hline
0&0&P^{11}_{22}&P^{12}_{22}&P^{11}_{23}\\
0&0&0&P^{22}_{22}&P^{21}_{23}\\
\hline
0&0&0&0&P^{11}_{33}\\
\end{array}\right)
\begin{matrix}
L^{2}(\mu)\\
L^{2}(\mu)\\
L^{2}(\mu)\\
L^{2}(\mu)\\
L^{2}(\mu)\\
\end{matrix}. \eqno{(3.42)}$$ We can verify that
$Q^{}_{1}=\mathrm{diag}
(P^{11}_{11},P^{22}_{11},P^{11}_{22},P^{22}_{22},P^{11}_{33})$ is a
projection in the relative commutant $\mathfrak{A}$. Thus
$R=Q-Q^{}_{1}$ is in the relative commutant $\mathfrak{A}$. By
computation, we obtain that the equality
$\sigma^{}_{\mathfrak{A}}(SR)=\sigma^{}_{\mathfrak{A}}(RS)=\{0\}$
holds for every element $S$ in $\mathfrak{A}$. Therefore by an
analogous proof following $(3.37)$ in Lemma $3.4$, there exists an
invertible element $X$ in the relative commutant $\{A^{}_{1}\oplus
A^{}_{2}\}^{\prime}\cap M^{}_{5^{}_{}}(L^{\infty}(\mu))$ such that
$XPX^{-1}$ is diagonal. By iterating the preceding proof we achieve
a generalized case mentioned in the lemma.
\end{proof}

By combining Proposition $2.5$, Lemma $3.6$ and the method applied
in the proof of Lemma $2.6$, we obtain the following proposition.

\begin{proposition}
For $k=1,\ldots,m$, let $A^{}_{k}=(A^{k}_{ij})^{}_{1\leq i,j\leq
n^{}_{k}}$ be an upper triangular operator-valued matrix in
$M^{}_{n^{}_{k}}(L^{\infty}(\mu))$ such that
\begin{enumerate}
\item $A^{k}_{ii}=A^{l}_{jj}$ for $k,l=1,\ldots,m$,
$i=1,\ldots,n^{}_{k}$ and $j=1,\ldots,n^{}_{l}$;
\item $A^{k}_{i,i+1}$ is supported on
$\Lambda$ for $k=1,\ldots,m$ and $i=1,\ldots,n^{}_{k}-1$;
\item $n^{}_{1}>\cdots>n^{}_{m}$ and
$\sum^{m}_{k=1}n^{}_{k}=r$.
\end{enumerate}
If $\mathscr{P}$ and $\mathscr{Q}$ are two bounded maximal abelian
sets of idempotents in the relative commutant
$\{\sum^{m}_{k=1}\oplus A^{}_{k}\}^{\prime}\cap
M^{}_{r^{}_{}}(L^{\infty}(\mu))$, then there exists an invertible
operator $X$ in $\{\sum^{m}_{k=1}\oplus A^{}_{k}\}^{\prime}\cap
M^{}_{r^{}_{}}(L^{\infty}(\mu))$ such that
$X\mathscr{P}X^{-1}=\mathscr{Q}$.
\end{proposition}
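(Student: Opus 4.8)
The plan is to follow the argument of Proposition $3.5$ almost verbatim, replacing the appeal to Lemma $3.4$ by Lemma $3.6$. First I would introduce $\mathscr{D}$, the set of diagonal projections lying in $\{\sum^{m}_{k=1}\oplus A^{}_{k}\}^{\prime}\cap M^{}_{r^{}_{}}(L^{\infty}(\mu))$, and check that $\mathscr{D}$ is a bounded maximal abelian set of idempotents in this relative commutant. Boundedness is automatic since every element of $\mathscr{D}$ is a projection. Maximality is forced by the structure of the relative commutant described in Lemma $3.2$ and Lemma $3.3$: an idempotent commuting with every member of $\mathscr{D}$ must be block upper triangular with constant main diagonal on each summand, and the hypotheses $A^{k}_{ii}=A^{l}_{jj}$ together with the supports of the $A^{k}_{i,i+1}$ being $\Lambda$ leave no room to enlarge $\mathscr{D}$ to a strictly larger abelian set of idempotents.

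Next I would take a bounded maximal abelian set of idempotents $\mathscr{P}$ in $\{\sum^{m}_{k=1}\oplus A^{}_{k}\}^{\prime}\cap M^{}_{r^{}_{}}(L^{\infty}(\mu))$ and invoke Proposition $2.5$ to extract a finite frame $\mathscr{P}^{}_{0}=\{P^{}_{j}\}^{\ell}_{j=1}\subseteq\mathscr{P}$. Partitioning $\Lambda$ into finitely many Borel pieces on which every $\mathrm{Tr}(P^{}_{j})$ is constant and arguing on each piece separately, I may assume each $\mathrm{Tr}(P^{}_{j})$ is constant on $\Lambda$. Then I would run the algorithm from the proof of Lemma $2.6$: apply Lemma $3.6$ to $P^{}_{1}$ to obtain an invertible $X^{}_{1}$ in the relative commutant with $X^{}_{1}P^{}_{1}X^{-1}_{1}$ diagonal; this confines each $X^{}_{1}P^{}_{j}X^{-1}_{1}$, $j\geq 2$, to the corner complementary to $\mathrm{ran}(X^{}_{1}P^{}_{1}X^{-1}_{1})$, where the compressed operator is again a finite direct sum of upper triangular blocks of the form covered by Lemma $3.6$ (hypotheses $(1)$–$(3)$ being inherited by such compressions, after a further Borel partition if necessary). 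Iterating $\ell-1$ times yields an invertible $X$ in $\{\sum^{m}_{k=1}\oplus A^{}_{k}\}^{\prime}\cap M^{}_{r^{}_{}}(L^{\infty}(\mu))$ with every $XP^{}_{j}X^{-1}$ diagonal. Since every idempotent in $\mathscr{P}$ is a finite combination of the $P^{}_{j}$ cut by central projections, every element of $X\mathscr{P}X^{-1}$ is diagonal, so $X\mathscr{P}X^{-1}\subseteq\mathscr{D}$; maximality of $X\mathscr{P}X^{-1}$ then gives $X\mathscr{P}X^{-1}=\mathscr{D}$.

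Applying the same reasoning to $\mathscr{Q}$ produces an invertible $Y$ in the relative commutant with $Y\mathscr{Q}Y^{-1}=\mathscr{D}$. Then $Z=Y^{-1}X$ is invertible in $\{\sum^{m}_{k=1}\oplus A^{}_{k}\}^{\prime}\cap M^{}_{r^{}_{}}(L^{\infty}(\mu))$ and $Z\mathscr{P}Z^{-1}=\mathscr{Q}$, which is the assertion.

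I expect the main obstacle to be the bookkeeping in the iterative diagonalization step. Unlike in Proposition $3.5$, the summands $A^{}_{k}$ have pairwise distinct sizes $n^{}_{1}>\cdots>n^{}_{m}$, so after $P^{}_{1}$ is diagonalized one must verify that the operator obtained by compressing to the complementary corner is still a finite direct sum of upper triangular blocks satisfying $(1)$–$(3)$ of Lemma $3.6$ (with new multiplicities, possibly mixing the equal-size situation of Lemma $3.4$ with the strictly-decreasing situation of Lemma $3.6$), and that the auxiliary similarities implementing each step can be chosen inside the relative commutant, exactly as in Lemma $3.4$ and Lemma $3.6$. Checking this stability under compression is the technical heart; the rest is the same abstract argument used for Proposition $3.5$.
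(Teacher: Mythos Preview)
Your proposal is correct and matches the paper's approach exactly: the paper simply states that Proposition $3.7$ follows ``by combining Proposition $2.5$, Lemma $3.6$ and the method applied in the proof of Lemma $2.6$,'' which is precisely the argument you outline (the verbatim analogue of Proposition $3.5$ with Lemma $3.6$ in place of Lemma $3.4$). Your identification of the iterative diagonalization bookkeeping as the technical heart is apt, though the paper itself leaves this at the same level of detail you do.
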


By a similar computation as in Lemma $3.3$, we obtain the following
lemma. This lemma implies the reason for our consideration about the
preceding two cases.

\begin{lemma}
For $k=1,2$, let $A^{}_{k}=(A^{k}_{ij})^{}_{1\leq i,j\leq n^{}_{k}}$
be an upper triangular operator-valued matrix in
$M^{}_{n^{}_{k}}(L^{\infty}(\mu))$ such that
\begin{enumerate}
\item $A^{k}_{ii}=A^{k}_{jj}$ for $k=1,2$, $i,j=1,\ldots,n^{}_{k}$;
\item $A^{1}_{11}(\lambda)\neq A^{2}_{11}(\lambda)$ a.e.~$[\mu]$ on
$\Lambda$.
\end{enumerate}
If $B$ is an element in $M^{}_{n^{}_{1}\times
n^{}_{2}}(L^{\infty}(\mu))$ such that $A^{}_{1}B=BA^{}_{2}$, then
$B=0$.
\end{lemma}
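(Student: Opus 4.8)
The plan is to imitate the entry-by-entry elimination used in the proof of Lemma $3.3$. Set $\phi^{}_{1}:=A^{1}_{11}$ and $\phi^{}_{2}:=A^{2}_{11}$; by hypothesis $(1)$ every main diagonal entry of $A^{}_{1}$ equals $\phi^{}_{1}$ and every main diagonal entry of $A^{}_{2}$ equals $\phi^{}_{2}$, and by hypothesis $(2)$ the function $\phi^{}_{1}-\phi^{}_{2}$ is nonzero a.e.~$[\mu]$, so multiplication by $\phi^{}_{1}-\phi^{}_{2}$ is injective on $L^{\infty}(\mu)$. Using the relaxed convention of treating Borel representatives as elements of $L^{\infty}(\mu)$, write $B=(B^{}_{ij})^{}_{1\leq i\leq n^{}_{1},\,1\leq j\leq n^{}_{2}}$ with $B^{}_{ij}$ in $L^{\infty}(\mu)$.

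First I would peel off the first column of $B$ from the bottom up. Since $A^{}_{1}$ is upper triangular, the $(i,1)$ entry of $A^{}_{1}B$ is $\phi^{}_{1}B^{}_{i1}+\sum^{}_{l>i}A^{1}_{il}B^{}_{l1}$; since $A^{}_{2}$ is upper triangular, the $(i,1)$ entry of $BA^{}_{2}$ is $B^{}_{i1}\phi^{}_{2}$. Comparing these for $i=n^{}_{1}$ yields $(\phi^{}_{1}-\phi^{}_{2})B^{}_{n^{}_{1}1}=0$, hence $B^{}_{n^{}_{1}1}=0$; substituting this back into the identity for $i=n^{}_{1}-1$ gives $(\phi^{}_{1}-\phi^{}_{2})B^{}_{n^{}_{1}-1,1}=0$, and iterating downward shows $B^{}_{i1}=0$ for $i=1,\ldots,n^{}_{1}$.

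Then I would run an outer induction on the column index $j$. Assuming $B^{}_{il}=0$ for all $i$ and all $l<j$, the $(i,j)$ entry of $BA^{}_{2}$ collapses to $\phi^{}_{2}B^{}_{ij}$, while the $(i,j)$ entry of $A^{}_{1}B$ equals $\phi^{}_{1}B^{}_{ij}+\sum^{}_{l>i}A^{1}_{il}B^{}_{lj}$; an inner induction on $i$ from $n^{}_{1}$ down to $1$, exactly as in the treatment of the first column, forces $(\phi^{}_{1}-\phi^{}_{2})B^{}_{ij}=0$ and hence $B^{}_{ij}=0$ for all $i$. After $n^{}_{2}$ steps this gives $B=0$.

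I do not expect a genuine obstacle here; the only point requiring a word of care is the measure-theoretic one, that the identity $(\phi^{}_{1}-\phi^{}_{2})B^{}_{ij}=0$ in $L^{\infty}(\mu)$ forces $B^{}_{ij}=0$ because $\phi^{}_{1}-\phi^{}_{2}$ is nonzero a.e.~$[\mu]$. This is exactly where hypothesis $(2)$ enters, and it plays the role that ``$A^{{}}_{i,i+1}$ is supported on $\Lambda$'' played in Lemma $3.3$; everything else is the same bookkeeping with the intertwining relation $A^{}_{1}B=BA^{}_{2}$ in place of a commutation relation.
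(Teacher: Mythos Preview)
Your proposal is correct and is exactly what the paper intends: the paper does not write out a proof of this lemma but simply says it follows ``by a similar computation as in Lemma $3.3$,'' and your entry-by-entry double induction (on the column index, then from the bottom row up) is precisely that computation, with the injectivity of multiplication by $\phi^{}_{1}-\phi^{}_{2}$ replacing the role of the $1$-diagonal entries being supported on $\Lambda$.
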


By an analogous discussion preceding Proposition $2.8$, for $A$ in
$M^{}_{n^{}_{}}(L^{\infty}(\mu))$ with the relative commutant
$\{A\}^{\prime}\cap M^{}_{n^{}_{}}(L^{\infty}(\mu))$ containing a
finite frame, there exists a finite $\mu$-measurable partition
$\{\Lambda^{}_{t}\}^{r}_{t=1}$ of $\Lambda$ such that for
$k,l=1,\ldots,m$ and based on the notations in Proposition $2.8$,
either $A^{k}_{11}(\lambda)=A^{l}_{11}(\lambda)$ a.e.~$[\mu]$ on
$\Lambda^{}_{t}$ or $A^{k}_{11}(\lambda)\neq A^{l}_{11}(\lambda)$
a.e.~$[\mu]$ on $\Lambda^{}_{t}$ for $t=1,\ldots,r$. Denote by
$E^{}_{t}$ the central projection in
$M^{}_{n^{}_{}}(L^{\infty}(\mu))$ such that $\mathrm{Tr}(E^{}_{t})$
is supported on $\Lambda^{}_{t}$ for $t=1,\ldots,r$. Then combining
Proposition $3.5$, Proposition $3.7$ and Lemma $3.8$, we obtain that
for every two bounded maximal abelian sets of idempotents
$\mathscr{P}$ and $\mathscr{Q}$ in the relative commutant
$\{AE^{}_{t}\}^{\prime}\cap
M^{}_{n^{}_{}}(L^{\infty}(\Lambda^{}_{t},\mu))$, there exists an
invertible element $X$ in $\{AE^{}_{t}\}^{\prime}\cap
M^{}_{n^{}_{}}(L^{\infty}(\Lambda^{}_{t},\mu))$ such that
$X\mathscr{P}X^{-1}=\mathscr{Q}$. Thus by combining the `local
results', we obtain the following theorem.

\begin{theorem}
Let $A$ be an operator in $M^{}_{n^{}_{}}(L^{\infty}(\mu))$. If the
the relative commutant $\{A\}^{\prime}\cap
M^{}_{n^{}_{}}(L^{\infty}(\mu))$ contains a finite frame then for
every two bounded maximal abelian sets of idempotents $\mathscr{P}$
and $\mathscr{Q}$ in the relative commutant $\{A\}^{\prime}\cap
M^{}_{n^{}_{}}(L^{\infty}(\mu))$, there exists an invertible element
$X$ in $\{A\}^{\prime}\cap M^{}_{n^{}_{}}(L^{\infty}(\mu))$ such
that $X\mathscr{P}X^{-1}=\mathscr{Q}$.
\end{theorem}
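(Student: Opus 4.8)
The plan is to reduce, by a chain of central cut-downs together with one global similarity, to the two model situations already settled in Proposition 3.5 and Proposition 3.7, using Lemma 3.8 to isolate the blocks carrying distinct diagonal symbols.

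First I would normalize $A$. Since $\{A\}^{\prime}\cap M_n(L^\infty(\mu))$ contains a finite frame $\{P_k\}_{k=1}^{m}$, every $\mathrm{Tr}(P_k)$ is a $\mu$-measurable integer-valued simple function, so there is a finite Borel partition of $\Lambda$ on which all of them are constant. The associated central projections lie in $\{A\}^{\prime}$, hence—commuting with every idempotent—in both $\mathscr{P}$ and $\mathscr{Q}$ by maximality; so it suffices to argue on one piece at a time and take the direct sum of the resulting invertible operators. On such a piece Proposition 2.8 yields an invertible $X_0$ with $X_0AX_0^{-1}=\bigoplus_{k=1}^{M}A_k$, each $A_k\in M_{n_k}(L^\infty(\mu))$ upper triangular, $A^{k}_{jj}=A^{k}_{11}$, and $A^{k}_{j,j+1}$ supported on $\Lambda$. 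Conjugation by $X_0$ is an algebra isomorphism of the relative commutants that carries bounded maximal abelian sets of idempotents to such sets, so I may assume $A=\bigoplus_k A_k$. Refining the partition once more—the locus $\{\lambda:A^{k}_{11}(\lambda)=A^{l}_{11}(\lambda)\}$ is Borel—I may also assume that for each pair $(k,l)$ either $A^{k}_{11}=A^{l}_{11}$ a.e.\ on $\Lambda$ or $A^{k}_{11}\neq A^{l}_{11}$ a.e.\ on $\Lambda$.

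Next I would split along the diagonal symbols: put $k\sim l$ when $A^{k}_{11}=A^{l}_{11}$ a.e., and for an equivalence class $c$ set $B_c=\bigoplus_{k\in c}A_k$ with $F_c$ the corresponding block-diagonal projection. By Lemma 3.8 every $C\in\{A\}^{\prime}$ annihilates the off-cluster blocks, so $\{A\}^{\prime}\cap M_n(L^\infty(\mu))=\bigoplus_c\bigl(\{B_c\}^{\prime}\cap M_{r_c}(L^\infty(\mu))\bigr)$ and each $F_c$ is a central projection of $\{A\}^{\prime}$ lying in $\{A\}^{\prime}$; by maximality $F_c\in\mathscr{P}\cap\mathscr{Q}$, so after cutting by $F_c$ I may assume $A=\bigoplus_{k=1}^{M}A_k$ is a single cluster, all $A^{k}_{11}$ equal a.e. If the $n_k$ are all equal this is Proposition 3.5; if $n_1>\cdots>n_M$ it is Proposition 3.7. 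In the remaining mixed case I would rerun the arguments of those two proofs together: Lemmas 3.2 and 3.3 put every element of $\{A\}^{\prime}$ into a block upper triangular form relative to a size-ordering of the summands, so a row-switching unitary $U$ turns an idempotent $P$ into an upper triangular $Q$ whose diagonal blocks are idempotents over $L^\infty(\mu)$; the fibrewise algorithm of Lemma 2.3 diagonalizes those blocks, and if $R$ denotes the strictly-upper part remaining then $\sigma_{\mathfrak A}(SR)=\sigma_{\mathfrak A}(RS)=\{0\}$ for every $S$ in $\mathfrak A=\{UAU^{*}\}^{\prime}\cap M_n(L^\infty(\mu))$, whence, exactly as after $(3.37)$, $P$ is similar within $\{A\}^{\prime}$ to a diagonal projection. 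With that in hand, the set $\mathscr D$ of diagonal projections in $\{A\}^{\prime}$ is, by the same verification as in Propositions 3.5 and 3.7, a bounded maximal abelian set of idempotents; given $\mathscr P$, Proposition 2.5 provides a finite frame $\mathscr P_0\subseteq\mathscr P$, and combining the last similarity statement with the iteration in the proof of Lemma 2.6 yields an invertible $X\in\{A\}^{\prime}$ with $X\mathscr P_0X^{-1}$ diagonal; since each member of $\mathscr P$ is a combination of members of $\mathscr P_0$ cut by central projections, $X\mathscr PX^{-1}\subseteq\mathscr D$, and maximality forces equality. The same for $\mathscr Q$ gives $Y\in\{A\}^{\prime}$ with $Y\mathscr QY^{-1}=\mathscr D$, so $Y^{-1}X$ conjugates $\mathscr P$ to $\mathscr Q$; undoing the cut-downs and the conjugation by $X_0$ recovers the required invertible element of the original $\{A\}^{\prime}\cap M_n(L^\infty(\mu))$.

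The bookkeeping of the central partitions is routine once one notes that at every stage the relevant projection is central in $\{A\}^{\prime}$ and hence forced into any bounded maximal abelian set of idempotents, so that these sets restrict to and reassemble from corners without loss. The one place needing genuine care—and the main obstacle—is the mixed-size cluster: proving that an idempotent in $\{\bigoplus_k A_k\}^{\prime}$, with the $n_k$ neither all equal nor strictly decreasing, is still similar inside the relative commutant to a diagonal projection. This is the common refinement of Lemma 3.4 and Lemma 3.6, and the delicate point is to organize the block upper triangular structure supplied by Lemmas 3.2--3.3 when several summands share a size while others do not, so that the row-switching reshuffle, the fibrewise use of Lemma 2.3, and the quasinilpotence argument for $R$ all proceed uniformly.
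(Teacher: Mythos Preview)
Your proposal is correct and follows essentially the same route as the paper: reduce via Proposition~2.8 and a finite Borel partition of $\Lambda$ so that on each piece the diagonal symbols $A^{k}_{11}$ either agree or differ a.e., use Lemma~3.8 to split off clusters with distinct symbols, and then invoke the diagonalization machinery of Propositions~3.5 and~3.7 together with Proposition~2.5 and Lemma~2.6 to conjugate any bounded maximal abelian set of idempotents onto the canonical diagonal one $\mathscr{D}$.

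The one point worth noting is that you are more explicit than the paper about the mixed-size cluster (sizes neither all equal nor strictly decreasing). The paper's argument, in the paragraph preceding Theorem~3.9, simply says ``combining Proposition~3.5, Proposition~3.7 and Lemma~3.8'' and leaves the reader to see that the proofs of Lemmas~3.4 and~3.6 merge without difficulty. You correctly identify this as the substantive step and spell out why it goes through: Lemmas~3.2--3.3 still give the block upper triangular shape, the row-switching unitary and fibrewise use of Lemma~2.3 diagonalize the diagonal blocks, and the strictly-upper remainder $R$ is quasinilpotent in the relative commutant, so the computation after~$(3.37)$ applies verbatim. This is exactly what the paper intends; your version just makes the bookkeeping visible.
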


By virtue of Theorem $3.9$, we prove the following corollary.

\begin{corollary}
For every normal operator $A$ in $M^{}_{n^{}_{}}(L^{\infty}(\mu))$,
the strongly irreducible decomposition of $A$ is unique up to
similarity with respect to the relative commutant
$\{A\}^{\prime}\cap M^{}_{n^{}_{}}(L^{\infty}(\mu))$.
\end{corollary}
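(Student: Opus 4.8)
The plan is to deduce the corollary from Theorem $3.9$ together with Proposition $2.5$ and Definition $1.1$. By Theorem $3.9$, it is enough to show that the relative commutant $\{A\}^{\prime}\cap M^{}_{n}(L^{\infty}(\mu))$ contains a finite frame; by Proposition $2.5$ this in turn is equivalent to exhibiting a bounded maximal abelian set of idempotents inside $\{A\}^{\prime}\cap M^{}_{n}(L^{\infty}(\mu))$. Once such a set is produced, Theorem $3.9$ gives that any two bounded maximal abelian sets of idempotents $\mathscr{P}$ and $\mathscr{Q}$ in $\{A\}^{\prime}\cap M^{}_{n}(L^{\infty}(\mu))$ are conjugate by an invertible element of $\{A\}^{\prime}\cap M^{}_{n}(L^{\infty}(\mu))$, which is exactly the content of Definition $1.1$.

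To construct the bounded maximal abelian set of idempotents I would use a masa. Since $A$ is normal, the von Neumann subalgebra $\mathscr{B}$ of $M^{}_{n}(L^{\infty}(\mu))$ generated by $A$, $A^{*}$ and the center $\mathscr{Z}$ (the weak-operator closure of $\mathscr{E}^{}_{n}$) is abelian. Enlarge $\mathscr{B}$ to a maximal abelian von Neumann subalgebra $\mathscr{M}$ of $M^{}_{n}(L^{\infty}(\mu))$, so that $\mathscr{Z}\subseteq\mathscr{B}\subseteq\mathscr{M}$ and $\mathscr{M}^{\prime}\cap M^{}_{n}(L^{\infty}(\mu))=\mathscr{M}$. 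Let $\mathscr{P}^{}_{\mathscr{M}}$ denote the set of projections in $\mathscr{M}$. Since $A\in\mathscr{M}$ and $\mathscr{M}$ is abelian, $\mathscr{M}\subseteq\{A\}^{\prime}\cap M^{}_{n}(L^{\infty}(\mu))$, so $\mathscr{P}^{}_{\mathscr{M}}$ is a set of idempotents in the relative commutant; it is bounded, since its elements have norm at most $1$, and it is abelian. It is maximal as an abelian set of idempotents already in $M^{}_{n}(L^{\infty}(\mu))$: if $Q$ is an idempotent commuting with every element of $\mathscr{P}^{}_{\mathscr{M}}$, then $Q$ commutes with the weak-operator closed linear span of $\mathscr{P}^{}_{\mathscr{M}}$, which is $\mathscr{M}$, so $Q\in\mathscr{M}^{\prime}\cap M^{}_{n}(L^{\infty}(\mu))=\mathscr{M}$, and then $Q$ is a self-adjoint idempotent in $\mathscr{M}$, i.e.\ $Q\in\mathscr{P}^{}_{\mathscr{M}}$. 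A fortiori $\mathscr{P}^{}_{\mathscr{M}}$ is a bounded maximal abelian set of idempotents in $\{A\}^{\prime}\cap M^{}_{n}(L^{\infty}(\mu))$. By Proposition $2.5$, $\{A\}^{\prime}\cap M^{}_{n}(L^{\infty}(\mu))$ then contains a finite frame, and Theorem $3.9$ completes the argument.

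I expect the only delicate points to be bookkeeping rather than substantial obstacles: one must be careful that ``maximal abelian set of idempotents'' in Definition $1.1$ is the notion in play, and that an idempotent commuting with a masa is forced to be self-adjoint and to lie in that masa (this is where von Neumann's double commutant theorem is used). If one prefers an explicit finite frame in the spirit of the reduction theory used elsewhere in the paper, an alternative is to invoke the measurable version of the spectral theorem to produce a unitary $U$ in $M^{}_{n}(L^{\infty}(\mu))$ with $UAU^{*}$ a diagonal operator-valued matrix $\mathrm{diag}(g^{}_{1},\ldots,g^{}_{n})$, $g^{}_{i}\in L^{\infty}(\mu)$, and then take $\{U^{*}E^{}_{ii}U\}^{n}_{i=1}$ where $E^{}_{ii}$ are the standard coordinate projections; the minimality and central-cut conditions $(3)$ and $(4)$ of Definition $2.4$ hold because each $E^{}_{ii}$ is fiberwise rank one, so any sub-idempotent of it in the relative commutant that is nonzero a.e.\ on its support equals $E^{}_{ii}$. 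In either presentation the normality of $A$ is used exactly once, to guarantee that $\mathscr{B}$ is abelian (equivalently, that $A(\lambda)$ is a normal matrix a.e.\ $[\mu]$).
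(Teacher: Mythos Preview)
Your proposal is correct and follows the same overall strategy as the paper: reduce to Theorem~$3.9$ via Proposition~$2.5$ by exhibiting a bounded maximal abelian set of idempotents in $\{A\}^{\prime}\cap M^{}_{n}(L^{\infty}(\mu))$ consisting of projections.

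The only difference is in how maximality among idempotents is verified. You take a masa $\mathscr{M}$ in the ambient von Neumann algebra $M^{}_{n}(L^{\infty}(\mu))$ containing $A$, $A^{*}$ and the center; then any idempotent $Q$ commuting with the projections of $\mathscr{M}$ lies in $\mathscr{M}^{\prime}\cap M^{}_{n}(L^{\infty}(\mu))=\mathscr{M}$, hence is normal and therefore a projection. The paper instead starts from a bounded maximal abelian set of projections $\mathscr{P}$ in $\{A\}^{\prime}\cap M^{}_{n}(L^{\infty}(\mu))$ and shows it is already maximal among idempotents there by a polar decomposition argument: writing $Q=V|Q|$, it checks that $V$ commutes with $\mathscr{P}$, uses that $A$ lies in the von Neumann algebra generated by $\mathscr{P}$ to conclude $V,V^{*}\in\{A\}^{\prime}\cap M^{}_{n}(L^{\infty}(\mu))$, and then deduces $Q=VV^{*}$. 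Your route is shorter because enlarging to a masa in the full algebra makes the ``normal idempotent $\Rightarrow$ projection'' step immediate, whereas the paper's choice of masa only in the relative commutant forces the extra polar decomposition work. Your alternative via measurable diagonalization (producing the explicit frame $\{U^{*}E^{}_{ii}U\}^{n}_{i=1}$) is also valid and is in the spirit of the Deckard--Pearcy result already invoked in the proof of Proposition~$2.8$.
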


\begin{proof}
Let $A$ be a normal operator in $M^{}_{n^{}_{}}(L^{\infty}(\mu))$
and $\mathscr{P}$ be a bounded maximal abelian set of projections in
$\{A\}^{\prime}\cap M^{}_{n^{}_{}}(L^{\infty}(\mu))$.

First we assert that $\mathscr{P}$ is a bounded maximal abelian set
of idempotents in $\{A\}^{\prime}\cap
M^{}_{n^{}_{}}(L^{\infty}(\mu))$. Assume that $Q$ is an idempotent
in $\{A\}^{\prime}\cap M^{}_{n^{}_{}}(L^{\infty}(\mu))$ such that
the equality $QP=PQ$ holds for every projection in $\mathscr{P}$. By
the polar decomposition of $Q$ we can express $Q$ in the form
$Q=V|Q|$, where $V$ in $M^{}_{n^{}_{}}(L^{\infty}(\mu))$ is a
partial isometry with initial space $(\mathrm{ker}\, Q)^{\bot}$ and
finial space $\mathrm{ran}\, Q$. The equalities $PQ=QP$ and
$PQ^{*}=Q^{*}P$ yield that $P|Q|=|Q|P$. Therefore the definition of
$V$ and the equality
$$PV|Q|=PQ=QP=V|Q|P=VP|Q| \eqno{(3.43)}$$ yield that the equality
$PV=VP$ holds for every projection $P$ in $\mathscr{P}$. Denote by
$\mathfrak{A}$ the abelian von Neumann algebra generated by
$\mathscr{P}$. Since $\mathfrak{A}$ is a maximal abelian von Neumann
algebra in $\{A\}^{\prime}\cap M^{}_{n^{}_{}}(L^{\infty}(\mu))$, we
obtain that $A$ is contained in $\mathfrak{A}$. The facts that
$PV=VP$ holds for every projection $P$ in $\mathscr{P}$ and $A$ is
contained in $\mathfrak{A}$ yield that $AV=VA$ and $AV^{*}=V^{*}A$.
Therefore $V$ and $V^{*}$ are contained in $\{A\}^{\prime}\cap
M^{}_{n^{}_{}}(L^{\infty}(\mu))$. Thus $VV^{*}$ and $V^{*}V$ are
contained in $\mathfrak{A}$. Since the equality $QP=PQ$ holds for
every projection in $\mathscr{P}$, we obtain that $VV^{*}Q=QVV^{*}$.
By the definition of $V$, $VV^{*}$ is the final projection of $V$
with $\mathrm{ran}\, VV^{*}=\mathrm{ran}\, Q$. Hence the equality
$Q=VV^{*}Q=QVV^{*}=VV^{*}$ holds. The assertion is achieved. By
Proposition $2.5$ and Theorem $3.9$, we can finish the proof.
\end{proof}

\begin{corollary}
Let $A$ be a normal operator in $M^{}_{n^{}_{}}(L^{\infty}(\mu))$.
Then every bounded maximal abelian set of idempotents in
$\{A\}^{\prime}\cap M^{}_{n^{}_{}}(L^{\infty}(\mu))$ is a bounded
maximal abelian set of idempotents in $\{A\}^{\prime}$.
\end{corollary}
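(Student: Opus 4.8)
The plan is to reduce to the case of a bounded maximal abelian set of \emph{projections}, and then to rerun, one level up, the polar-decomposition argument from the proof of Corollary $3.11$, now inside the full commutant $\{A\}'$ of $A$ in $\mathscr{L}((L^{2}(\mu))^{(n)})$ rather than inside $\{A\}'\cap M^{}_{n}(L^{\infty}(\mu))$. First I would use Zorn's lemma in the von Neumann algebra $\{A\}'\cap M^{}_{n}(L^{\infty}(\mu))$ (a von Neumann algebra because $A$ is normal, so $\{A\}'=\{A,A^{*}\}'$ by Fuglede's theorem) to fix a maximal abelian set of projections $\mathscr{P}^{}_{0}$ in $\{A\}'\cap M^{}_{n}(L^{\infty}(\mu))$. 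By the first assertion in the proof of Corollary $3.11$, $\mathscr{P}^{}_{0}$ is in fact a bounded maximal abelian set of idempotents in $\{A\}'\cap M^{}_{n}(L^{\infty}(\mu))$; hence, by Proposition $2.5$, this relative commutant contains a finite frame, so Theorem $3.9$ applies. Thus, given any bounded maximal abelian set of idempotents $\mathscr{P}$ in $\{A\}'\cap M^{}_{n}(L^{\infty}(\mu))$, there is an invertible $X\in\{A\}'\cap M^{}_{n}(L^{\infty}(\mu))\subseteq\{A\}'$ with $X\mathscr{P}X^{-1}=\mathscr{P}^{}_{0}$; since conjugation by such an $X$ is an algebra automorphism of $\{A\}'$ preserving boundedness, idempotency, commutation and maximality, it suffices to prove the corollary for $\mathscr{P}^{}_{0}$.

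Next I would let $\mathfrak{A}=W^{*}(\mathscr{P}^{}_{0})$ and show that $\mathfrak{A}$ is maximal abelian in $\{A\}'$. Because $\mathscr{P}^{}_{0}$ is a maximal abelian set of projections in the von Neumann algebra $\{A\}'\cap M^{}_{n}(L^{\infty}(\mu))$, the algebra $\mathfrak{A}$ is maximal abelian \emph{in} $\{A\}'\cap M^{}_{n}(L^{\infty}(\mu))$, and therefore $\mathfrak{A}$ absorbs every abelian von Neumann subalgebra of $\{A\}'\cap M^{}_{n}(L^{\infty}(\mu))$ that commutes with it. In particular $\mathfrak{A}$ contains the commutant $(M^{}_{n}(L^{\infty}(\mu)))'=\{fI^{}_{n}:f\in L^{\infty}(\mu)\}$ of the ambient type $\mathrm{I}^{}_{n}$ algebra (abelian, central, hence inside $\{A\}'\cap M^{}_{n}(L^{\infty}(\mu))$ and commuting with $\mathfrak{A}$), and it contains $W^{*}(A)$ (abelian because $A$ is normal, contained in $\{A\}'\cap M^{}_{n}(L^{\infty}(\mu))$, and commuting with $\mathscr{P}^{}_{0}$ since a projection $P$ with $PA=AP$ also satisfies $PA^{*}=A^{*}P$). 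Now if $T\in\{A\}'$ commutes with $\mathfrak{A}$, then $T$ commutes with $(M^{}_{n}(L^{\infty}(\mu)))'$, so $T\in(M^{}_{n}(L^{\infty}(\mu)))''=M^{}_{n}(L^{\infty}(\mu))$; hence $T\in\{A\}'\cap M^{}_{n}(L^{\infty}(\mu))$ commutes with $\mathfrak{A}$, whence $T\in\mathfrak{A}$. This gives $\mathfrak{A}'\cap\{A\}'=\mathfrak{A}$, i.e. $\mathfrak{A}$ is maximal abelian in $\{A\}'$, and in particular $\mathscr{P}^{}_{0}$, which coincides with the projection lattice of $\mathfrak{A}$, is a maximal abelian set of projections in $\{A\}'$.

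Finally I would upgrade ``maximal abelian set of projections'' to ``maximal abelian set of idempotents'' in $\{A\}'$, copying the argument of Corollary $3.11$. Let $Q\in\{A\}'$ be an idempotent commuting with every $P\in\mathscr{P}^{}_{0}$, and let $Q=V|Q|$ be its polar decomposition; since $\{A\}'$ is a von Neumann algebra, $|Q|,V,V^{*}\in\{A\}'$. As in $(3.43)$, from $PQ=QP$, $PQ^{*}=Q^{*}P$ and $P|Q|=|Q|P$ one gets $PV=VP$ for every $P\in\mathscr{P}^{}_{0}$; hence $VV^{*}\in\{A\}'$ is a projection commuting with $\mathscr{P}^{}_{0}$, so $VV^{*}\in\mathscr{P}^{}_{0}$ by the maximality just established. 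Since $VV^{*}$ is the range projection of $Q$ and commutes with $Q$, we obtain $Q=VV^{*}Q=QVV^{*}=VV^{*}\in\mathscr{P}^{}_{0}$. Because projections are norm bounded by $1$, $\mathscr{P}^{}_{0}$ is thus a bounded maximal abelian set of idempotents in $\{A\}'$, and the reduction of the first paragraph finishes the proof.

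The step I expect to be the main obstacle is the middle one --- proving that $\mathfrak{A}$ is maximal abelian in the \emph{large} commutant $\{A\}'$. The crux is to extract, from maximality of $\mathscr{P}^{}_{0}$ inside the relative commutant alone, the fact that $\mathfrak{A}$ must swallow the commutant $(M^{}_{n}(L^{\infty}(\mu)))'$ of the type $\mathrm{I}^{}_{n}$ algebra; this is what pins any $T\in\{A\}'$ commuting with $\mathfrak{A}$ down to $M^{}_{n}(L^{\infty}(\mu))$, after which the problem collapses to facts already available in $\{A\}'\cap M^{}_{n}(L^{\infty}(\mu))$. The reduction to projections and the polar-decomposition step are then routine given Theorem $3.9$ and the proof of Corollary $3.11$.
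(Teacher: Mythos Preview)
Your proof is correct, and in fact your middle paragraph already contains the paper's entire argument; the surrounding two paragraphs are unnecessary scaffolding.

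The paper argues directly on an arbitrary bounded maximal abelian set of idempotents $\mathscr{P}$ in $\{A\}'\cap M^{}_{n}(L^{\infty}(\mu))$, without ever reducing to projections. The observation is just this: the central projections $\mathscr{E}^{}_{n}$ of $M^{}_{n}(L^{\infty}(\mu))$ lie in $\{A\}'\cap M^{}_{n}(L^{\infty}(\mu))$ and commute with everything there, so by maximality $\mathscr{E}^{}_{n}\subseteq\mathscr{P}$. Hence any idempotent $Q\in\{A\}'$ commuting with $\mathscr{P}$ commutes with $\mathscr{E}^{}_{n}$, which forces $Q\in(M^{}_{n}(L^{\infty}(\mu)))''=M^{}_{n}(L^{\infty}(\mu))$; now $Q$ is an idempotent in the relative commutant commuting with $\mathscr{P}$, so $Q\in\mathscr{P}$. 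That is the whole proof.

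Your route passes through the same double-commutant step, but you first conjugate $\mathscr{P}$ to a set of genuine projections $\mathscr{P}^{}_{0}$ via Theorem~$3.9$, and afterwards you run the polar-decomposition argument of Corollary~$3.11$ to upgrade ``maximal among projections'' to ``maximal among idempotents'' in $\{A\}'$. Both detours are sound but dispensable: the inclusion $\mathscr{E}^{}_{n}\subseteq\mathscr{P}$ holds for \emph{any} bounded maximal abelian set of idempotents, not just projections, so one can work with $\mathscr{P}$ itself. A side benefit of the paper's shortcut is that it never invokes normality of $A$ (no Fuglede, no polar decomposition inside $\{A\}'$); the argument goes through verbatim for any $A\in M^{}_{n}(L^{\infty}(\mu))$, the normality hypothesis serving only to guarantee (via Corollary~$3.11$) that such $\mathscr{P}$ exist.
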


\begin{proof}
Note that every bounded maximal abelian set of idempotents in
$\{A\}^{\prime}\cap M^{}_{n^{}_{}}(L^{\infty}(\mu))$ contains the
set of central projections in $M^{}_{n^{}_{}}(L^{\infty}(\mu))$. Let
$\mathscr{P}$ be a bounded maximal abelian set of idempotents in
$\{A\}^{\prime}\cap M^{}_{n^{}_{}}(L^{\infty}(\mu))$ and $Q$ be an
idempotent in $\{A\}^{\prime}\cap\{\mathscr{P}\}^{\prime}$. Since
$Q$ commutes with every central projection in
$M^{}_{n^{}_{}}(L^{\infty}(\mu))$, we obtain that $Q$ is contained
in $M^{}_{n^{}_{}}(L^{\infty}(\mu))$. Thus $Q$ is contained in
$\mathscr{P}$. Therefore $\mathscr{P}$ is a bounded maximal abelian
set of idempotents in $\{A\}^{\prime}$.
\end{proof}

\section{Direct integral forms and the `local' $K$-theory for the
relative commutants of operators in
$M^{}_{n^{}_{}}(L^{\infty}(\mu))$}

By virtue of the reduction theory of von Neumann algebras, for every
operator $A$ in $M^{}_{n^{}_{}}(L^{\infty}(\mu))$, we can express
$A$ in a direct integral form with respect to an abelian von Neumann
algebra in the relative commutant $\{A\}^{\prime}\cap
M^{}_{n^{}_{}}(L^{\infty}(\mu))$. A natural question is what the
direct integral form of $A$ looks like with respect to a finite
frame in $\{A\}^{\prime}\cap M^{}_{n^{}_{}}(L^{\infty}(\mu))$. For
this purpose we need to introduce some concepts and notations that
will be used in this section. First, we need two concepts from
operator theory.

\begin{definition}
An operator $A$ in $\mathscr {L}(\mathscr {H})$ is said to be
{\textit{irreducible}} if its commutant
$\{A\}^{\prime}\triangleq\{B\in\mathscr {L}(\mathscr {H}):AB=BA\}$
contains no projections other than $0$ and the identity operator $I$
on $\mathscr{H}$, introduced by P. Halmos in \cite{Halmos}. (The
separability assumption of $\mathscr {H}$ is necessary because on a
nonseparable Hilbert space every operator is reducible.) An operator
$A$ in $\mathscr {L}(\mathscr {H})$ is said to be {\textit{strongly
irreducible}} if $XAX^{-1}$ is irreducible for every invertible
operator $X$ in $\mathscr {L}(\mathscr {H})$, introduced by F.
Gilfeather in \cite{Gilfeather}. This shows that the commutant of a
strongly irreducible operator contains no idempotents other than $0$
and $I$. For more literature around strongly irreducible operators,
the reader is referred to \cite{Jiang_5}.
\end{definition}

We also need some concepts from the reduction theory of von Neumann
algebras.

\begin{definition}
For the most part, we follow \cite{Azoff_2,Schwartz}. Once and for
all, Let $\mathscr {H}^{}_{1}\subset\mathscr
{H}^{}_{2}\subset\cdots\subset\mathscr {H}^{}_{\infty}$ be a
sequence of Hilbert spaces with $\mathscr {H}^{}_{n}$ having
dimension $n$ and $\mathscr {H}^{}_{\infty}$ spanned by the
remaining $\mathscr {H}^{}_n$s. Let $\mu$ be (the completion of) a
finite positive regular Borel measure supported on a compact subset
$\Lambda$ of $\mathbb {R}$. (This assumption makes sense by virtue
of (\cite{Rosenthal}, Theorem $7.12$).) And let
$\{\Lambda^{}_{\infty}\}\cup\{\Lambda^{}_{n}\}^{\infty}_{n=1}$ be a
Borel partition of $\Lambda$. Then we form the associated direct
integral Hilbert space $$\mathscr {H}=\int^{\oplus}_{\Lambda}
\mathscr {H}(\lambda) d\mu(\lambda), \eqno{(4.1)}$$ which consists
of all (equivalence classes of) measurable functions $f$ and $g$
from $\Lambda$ into $\mathscr {H}^{}_{\infty}$ such that:
\begin{enumerate}
\item[(1)] $f(\lambda)\in \mathscr {H}(\lambda)\equiv \mathscr
{H}^{}_{n}$ for $\lambda\in \Lambda^{}_{n}$;
\item[(2)] $\|f\|^{2}\triangleq\int^{}_{\Lambda}\|f(\lambda)\|^{2}
d\mu(\lambda)<\infty$;
\item[(3)] $(f,g)\triangleq\int^{}_{\Lambda}(f(\lambda),g(\lambda))
d\mu(\lambda)$.
\end{enumerate}
The element in $\mathscr {H}$ represented by the measurable function
$\lambda\rightarrow f(\lambda)$ is denoted by
$\int^{\oplus}_{\Lambda}f(\lambda) d\mu(\lambda)$. An operator $A$
in $\mathscr {L}(\mathscr {H})$ is said  to be
{\textit{decomposable}} if there exists a strongly $\mu$-measurable
operator-valued function $A(\cdot)$ defined on $\Lambda$ such that
$A(\lambda)$ is an operator in $\mathscr {L}(\mathscr {H}(\lambda))$
and $(Af)(\lambda)=A(\lambda)f(\lambda)$, for every $f\in \mathscr
{H}$. We write
$A\equiv\int^{\oplus}_{\Lambda}A(\lambda)d\mu(\lambda)$ for the
equivalence class corresponding to $A(\cdot)$.  If $A(\lambda)$ is a
scalar multiple of the identity on $\mathscr {H}(\lambda)$ for
almost every $\lambda$ in $\Lambda$, then $A$ is said to be
{\textit{diagonal}}. The collection of all diagonal operators is
said to be the {\textit{diagonal algebra}} of $\Lambda$. It is an
abelian von Neumann algebra. If $A(\lambda)$ is strongly irreducible
on $\mathscr {H}(\lambda)$ for almost every $\lambda$ in $\Lambda$,
then $A$ is said to be {\textit{a direct integral of strongly
irreducible operators}}.
\end{definition}

The following two basic results will be used in the sequel:
\begin{enumerate}
\item An operator acting on a direct integral of Hilbert spaces is
decomposable if and only if it commutes with the corresponding
diagonal algebra (\cite{Schwartz}, p. $22$).
\item Every abelian von Neumann algebra is (unitarily equivalent to)
an essentially unique diagonal algebra (\cite{Schwartz}, p. $19$).
\end{enumerate}

For a direct integral of strongly irreducible operators, we obtain
the following proposition.

\begin{proposition}
Let $A$ be in $M^{}_{n^{}_{}}(L^{\infty}(\mu))$. Then
$\{A\}^{\prime}\cap M^{}_{n^{}_{}}(L^{\infty}(\mu))$ contains a
finite frame if and only if there exists an invertible element $X$
in $M^{}_{n^{}_{}}(L^{\infty}(\mu))$ and a unitary operator $U$ such
that $UXAX^{-1}U^{*}_{}$ is a direct integral of strongly
irreducible operators with respect to a diagonal algebra
$\mathscr{D}$ and $U^{*}_{}\mathscr{D}U\subseteq
M^{}_{n^{}_{}}(L^{\infty}(\mu))$.
\end{proposition}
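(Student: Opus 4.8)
The plan is to establish the two implications separately, using the structural results already assembled. For the forward direction, suppose $\{A\}^{\prime}\cap M^{}_{n}(L^{\infty}(\mu))$ contains a finite frame $\{P^{}_{k}\}^{m}_{k=1}$. By Lemma $2.6$ we may conjugate by an invertible $X^{}_{0}$ in $M^{}_{n}(L^{\infty}(\mu))$ so that every $X^{}_{0}P^{}_{k}X^{-1}_{0}$ is a diagonal projection; after a further Borel partition of $\Lambda$ into the sets where $\{\mathrm{Tr}(P^{}_{k})\}^{m}_{k=1}$ and the $\{A^{k}_{11}\}$ take constant values (as in the discussion preceding Proposition $2.8$ and preceding Theorem $3.9$), Proposition $2.8$ puts $XAX^{-1}$ into the block-diagonal form $(2.30)$ with each block $A^{}_{k}$ in the upper-triangular form $(2.31)$ whose main-diagonal entries coincide and whose $1$-diagonal entries are supported on all of $\Lambda$. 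I then argue that, after this normalization, $XAX^{-1}$ commutes with the diagonal algebra $\mathscr{D}$ coming from the center $\mathscr{E}^{}_{n}$ together with the finite frame: that is, I take $\mathscr{D}$ to be the abelian von Neumann algebra generated by the self-adjoint finite frame and $\mathscr{E}^{}_{n}$. The key point is that each fiber $(XAX^{-1})(\lambda)$ is, by construction, a direct sum of blocks each of which is a single Jordan-type building block $J^{}_{n^{}_{k}}(\lambda)$ in $M^{}_{n^{}_{k}}(\mathbb{C})$ with constant diagonal and nonzero super-diagonal entry, hence strongly irreducible in $M^{}_{n^{}_{k}}(\mathbb{C})$; choosing the unitary $U$ that realizes $\mathscr{D}$ as an honest diagonal algebra on a direct integral of Hilbert spaces (possible by the second basic result cited after Definition $4.2$), the operator $UXAX^{-1}U^{*}$ becomes a direct integral of these strongly irreducible fibers, and $U^{*}\mathscr{D}U\subseteq M^{}_{n}(L^{\infty}(\mu))$ because $\mathscr{D}$ was built inside $M^{}_{n}(L^{\infty}(\mu))$ to begin with.

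For the converse, suppose there are an invertible $X$ in $M^{}_{n}(L^{\infty}(\mu))$ and a unitary $U$ with $B\triangleq UXAX^{-1}U^{*}$ a direct integral of strongly irreducible operators relative to a diagonal algebra $\mathscr{D}$, and $U^{*}\mathscr{D}U\subseteq M^{}_{n}(L^{\infty}(\mu))$. The diagonal algebra $\mathscr{D}$ is an abelian von Neumann algebra contained in $\{B\}^{\prime}$, and the minimal projections of the fibers of $\mathscr{D}$ (i.e. the characteristic functions of the atoms of the multiplicity decomposition $\{\Lambda^{}_{n}\}$ of Definition $4.2$) form a bounded set of mutually orthogonal projections in $\{B\}^{\prime}$ summing to $I$; pulling them back by $U^{*}(\cdot)U$ gives projections in $\{UXAX^{-1}U^{*}\}^{\prime}\cap$ (diagonal algebra) that lie inside $M^{}_{n}(L^{\infty}(\mu))$ by hypothesis, and then conjugating by $X^{-1}U^{*}$ transports them to a bounded set of mutually orthogonal projections $\{Q^{}_{k}\}$ in $\{A\}^{\prime}\cap M^{}_{n}(L^{\infty}(\mu))$ summing to $I$. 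To see that one can arrange these (after refinement on a further Borel partition and after removing redundant proper sub-idempotents) to satisfy the minimality condition $(3)$ in Definition $2.4$, I invoke strong irreducibility of the fibers: if some $Q^{}_{k}$ had a proper sub-idempotent in $\{A\}^{\prime}\cap M^{}_{n}(L^{\infty}(\mu))$ supported a.e. on its support, that would produce, fiberwise on a positive-measure set, a nontrivial idempotent commuting with a strongly irreducible fiber of $B$, contradicting Definition $4.1$. Refining to a finite frame then follows exactly as in the second half of the proof of Proposition $2.5$ (reduction on a maximal totally-ordered chain of idempotents, which terminates after finitely many steps since the trace functions are integer-valued simple functions bounded by $n$).

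The main obstacle I anticipate is the careful bookkeeping in the forward direction needed to produce a single diagonal algebra $\mathscr{D}\subseteq M^{}_{n}(L^{\infty}(\mu))$ that simultaneously (i) contains the center $\mathscr{E}^{}_{n}$, (ii) is generated by a self-adjoint refinement of the finite frame so that its fibers split $XAX^{-1}$ into the Proposition $2.8$ blocks, and (iii) is maximal abelian enough that, when realized as an honest diagonal algebra via $U$, the complementary fibers of $UXAX^{-1}U^{*}$ are genuinely strongly irreducible and not merely block upper-triangular. Condition (iii) is exactly where the content of Proposition $2.8$ items $(2)$ and $(3)$ is used: constant diagonal plus everywhere-supported super-diagonal is precisely what forces each fiber block to be strongly irreducible in $M^{}_{n^{}_{k}}(\mathbb{C})$, so the proof should make that fiberwise implication explicit (a constant Jordan block with nonzero super-diagonal entry has commutant consisting of upper-triangular Toeplitz matrices, which contains no nontrivial idempotent). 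Once this fiberwise statement is nailed down, both directions assemble from the already-proved lemmas with only routine measurable-selection arguments.
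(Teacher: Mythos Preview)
Your forward direction is essentially the paper's argument: diagonalize the frame via Lemma~$2.6$, invoke Proposition~$2.8$ to obtain the block form~$(2.30)$--$(2.31)$, take $\mathscr{D}$ to be the abelian von Neumann algebra generated by the self-adjoint frame together with $\mathscr{E}^{}_{n}$, and realize $\mathscr{D}$ as a diagonal algebra via a unitary $U$. The paper carries this out through a worked $3\times 3$ example (constructing an explicit generator $D^{}_{0}$ with disjoint spectral pieces $\Gamma^{}_{i}$) and cites \cite{Shi_2}, Lemma~$3.1$ for the fiberwise strong irreducibility, whereas you argue the latter directly from the Toeplitz commutant of a Jordan block; both are fine.

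Your converse direction has a gap. You pull back only the finitely many projections onto the multiplicity pieces $\Lambda^{}_{k}$ of Definition~$4.2$ and then try to verify condition~$(3)$ of Definition~$2.4$ for these $Q^{}_{k}$ directly. The difficulty is that a candidate proper sub-idempotent $P\leq Q^{}_{k}$ in $\{A\}^{\prime}\cap M^{}_{n}(L^{\infty}(\mu))$ is only assumed to commute with $Q^{}_{k}$ (and with the center of $M^{}_{n}(L^{\infty}(\mu))$); this does \emph{not} force $UPU^{*}$ to commute with the full diagonal algebra $\mathscr{D}$, so you cannot conclude that $UPU^{*}$ is decomposable and pass to fibers. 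The multiplicity projections together with $\mathscr{E}^{}_{n}$ do not generate $\mathscr{D}$.

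The paper avoids this by working instead with the set $\mathscr{P}$ of \emph{all} projections in $U^{*}\mathscr{D}U$, showing that $\mathscr{P}$ is a bounded maximal abelian set of idempotents in $\{A\}^{\prime}\cap M^{}_{n}(L^{\infty}(\mu))$: any idempotent $P$ commuting with all of $\mathscr{P}$ commutes with $U^{*}\mathscr{D}U$ (since $\mathscr{D}$ is generated by its projections), hence $UPU^{*}$ is decomposable by the first basic result after Definition~$4.2$, and then fiberwise strong irreducibility forces $P\in\mathscr{P}$. Proposition~$2.5$ then yields the finite frame. Replacing your $\{Q^{}_{k}\}$ by the full projection lattice of $U^{*}\mathscr{D}U$ closes the gap with essentially no extra work.
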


\begin{proof}
If $\{A\}^{\prime}\cap M^{}_{n^{}_{}}(L^{\infty}(\mu))$ contains a
finite frame $\{P^{}_{k}\}^{m}_{k=1}$, then by Lemma $2.6$ there
exists an invertible element $X$ in
$M^{}_{n^{}_{}}(L^{\infty}(\mu))$ such that every $XP^{}_{k}X^{-1}$
is diagonal for $k=1,2,\ldots,m$. The set
$\{XP^{}_{k}X^{-1}\}^{m}_{k=1}$ is a finite frame in the relative
commutant $\{XAX^{-1}\}^{\prime}\cap
M^{}_{n^{}_{}}(L^{\infty}(\mu))$. Therefore
$\{XP^{}_{k}X^{-1}\}^{m}_{k=1}$ and $\mathscr{E}^{}_{n}$, the set of
central projections of $M^{}_{n^{}_{}}(L^{\infty}(\mu))$, generate a
maximal abelian von Neumann subalgebra $\mathscr{D}^{}_{0}$ in the
relative commutant $\{XAX^{-1}\}^{\prime}\cap
M^{}_{n^{}_{}}(L^{\infty}(\mu))$. Since $\mathscr{D}^{}_{0}$
contains $\mathscr{E}^{}_{n}$, we can verify that
$\mathscr{D}^{}_{0}$ is also a maximal abelian von Neumann
subalgebra in $\{XAX^{-1}\}^{\prime}$. Thus by (\cite{Rosenthal},
Theorem $7.12$), there exists a self-adjoint element $D^{}_{0}$
generating $\mathscr{D}^{}_{0}$.

For the sake of simplicity, we assume that $A$ in
$M^{}_{3^{}_{}}(L^{\infty}(\mu))$ is of the form
$$A=\begin{pmatrix}
f^{}_{}&f^{}_{12}&f^{}_{13}\\
0&f^{}_{}&f^{}_{23}\\
0&0&f^{}_{}\\
\end{pmatrix}
\begin{matrix}
L^{2}(\mu)\\
L^{2}(\mu)\\
L^{2}(\mu)\\
\end{matrix} \eqno{(4.2)}$$ such that:
\begin{enumerate}
\item $\Lambda=\Lambda^{}_{1}\cup\Lambda^{}_{2}$ and
$\mu(\Lambda^{}_{i})>0$ for $i=1,2$;
\item the support of $f^{}_{12}$ is $\Lambda$;
\item the support of $f^{}_{23}$ is $\Lambda^{}_{1}$;
\item $f^{}_{13}$ and $f^{}_{23}$ vanish on $\Lambda^{}_{2}$.
\end{enumerate} Correspondingly, $\{P^{}_{1}, P^{}_{2}\}$ is a
finite self-adjoint frame contained in the relative commutant
$\{A\}^{\prime}\cap M^{}_{3^{}_{}}(L^{\infty}(\mu))$ such that
\begin{enumerate}
\item $P^{}_{1}$ is of the form
$$P^{}_{1}(\lambda)=\begin{cases}
\mathrm{diag}(1,1,1)\in
M^{}_{3}(\mathbb{C}),&\forall\lambda\in\Lambda^{}_{1};\\
\mathrm{diag}(1,1,0)\in
M^{}_{3}(\mathbb{C}),&\forall\lambda\in\Lambda^{}_{2};\\
\end{cases} \eqno{(4.3)}$$
\item $P^{}_{2}$ is of the form
$$P^{}_{2}(\lambda)=\begin{cases}
0\in M^{}_{3}(\mathbb{C}),&\forall\lambda\in\Lambda^{}_{1};\\
\mathrm{diag}(0,0,1)\in
M^{}_{3}(\mathbb{C}),&\forall\lambda\in\Lambda^{}_{2}.\\
\end{cases} \eqno{(4.4)}$$
\end{enumerate} Thus the abelian von Neumann algebra $\mathscr{D}^{}_{0}$
generated by $\{P^{}_{1}, P^{}_{2}\}\cup\mathscr{E}^{}_{3}$ is
maximal in the relative commutant $\{A\}^{\prime}\cap
M^{}_{3^{}_{}}(L^{\infty}(\mu))$. By (\cite{Rosenthal}, Theorem
$7.12$), there exist self-adjoint injective elements $D^{}_{i}$ in
either $L^{\infty}(\Lambda^{}_{1},\mu)$ or
$L^{\infty}(\Lambda^{}_{2},\mu)$ for $i=1,2,3$ such that:
\begin{enumerate}
\item $D^{}_{i}$ is in $L^{\infty}(\Lambda^{}_{i},\mu)$ with
$\{D^{}_{i}\}^{\prime\prime}_{}=L^{\infty}(\Lambda^{}_{i},\mu)$ for
$i=1,2$, and $D^{}_{3}$ is in $L^{\infty}(\Lambda^{}_{2},\mu)$ with
$\{D^{}_{3}\}^{\prime\prime}_{}=L^{\infty}(\Lambda^{}_{2},\mu)$;
\item the spectra of $D^{}_{i}$s denoted by $\Gamma^{}_{i}$ for $i=1,2,3$
are pairwise disjoint.
\end{enumerate}
Let $D^{}_{0}$ be a self-adjoint element in $\{A\}^{\prime}\cap
M^{}_{3^{}_{}}(L^{\infty}(\mu))$ of the form
$$D^{}_{0}(\lambda)=\begin{cases}
\mathrm{diag}(D^{}_{1}(\lambda),D^{}_{1}(\lambda),D^{}_{1}(\lambda))\in
M^{}_{3}(\mathbb{C}),&\forall\lambda\in\Lambda^{}_{1};\\
\mathrm{diag}(D^{}_{2}(\lambda),D^{}_{2}(\lambda),D^{}_{3}(\lambda))\in
M^{}_{3}(\mathbb{C}),&\forall\lambda\in\Lambda^{}_{2}.\\
\end{cases} \eqno{(4.5)}$$ Therefore
$\sigma(D^{}_{0})=\Gamma^{}_{1}\cup\Gamma^{}_{2}\cup\Gamma^{}_{3}$
and $\{D^{}_{0}\}^{\prime\prime}_{}=\mathscr{D}^{}_{0}$. Denote by
$E^{}_{D^{}_{0}}(\cdot)$ the spectral measure for $D^{}_{0}$. With
respect to $\sigma(D^{}_{0})$, we form the direct integral Hilbert
space $\mathscr{H}$ as follows
$$\mathscr {H}=\int^{\oplus}_{\sigma(D^{}_{0})}
\mathscr {H}(\gamma) d\nu(\gamma), \eqno{(4.6)}$$ where
$$\mathscr{H}(\gamma)=\begin{cases}
\mathbb{C}^{}_{}\oplus\mathbb{C}^{}_{}\oplus\mathbb{C}^{}_{},&\forall\gamma\in\Gamma^{}_{1};\\
\mathbb{C}^{}_{}\oplus\mathbb{C}^{}_{}\oplus 0,&\forall\gamma\in\Gamma^{}_{2};\\
0\oplus 0\oplus\mathbb{C}^{}_{},&\forall\gamma\in\Gamma^{}_{3};\\
\end{cases} \eqno{(4.7)}$$
$$\nu(\Delta)=\begin{cases}
\mu\circ D^{-1}_{1}(\Delta),&\forall\Delta\subseteq\Gamma^{}_{1};\\
\mu\circ D^{-1}_{2}(\Delta),&\forall\Delta\subseteq\Gamma^{}_{2};\\
\mu\circ D^{-1}_{3}(\Delta),&\forall\Delta\subseteq\Gamma^{}_{3};\\
\end{cases} \eqno{(4.8)}$$ and $D^{-1}_{i}(\Delta)$ denotes the
preimage of each Borel subset $\Delta$ with respect to (the Borel
function) $D^{}_{i}$. Define two $\nu$-measurable operator-valued
functions $D(\cdot)$ and $\tilde{A}(\cdot)$ on $\sigma(D^{}_{0})$ of
the form
$$D(\gamma)=\begin{cases}
\mathrm{diag}(\gamma,\gamma,\gamma)\in M^{}_{3}(\mathbb{C}),&\forall\gamma\in\Gamma^{}_{1};\\
\mathrm{diag}(\gamma,\gamma,0)\in M^{}_{3}(\mathbb{C}),&\forall\gamma\in\Gamma^{}_{2};\\
\mathrm{diag}(0,0,\gamma)\in M^{}_{3}(\mathbb{C}),&\forall\gamma\in\Gamma^{}_{3};\\
\end{cases} \eqno{(4.9)}$$ and $\tilde{A}(\gamma)$
to be the restriction of $A\circ D^{-1}_{i}(\gamma)$ on
$\mathrm{ran}(D(\gamma))=\mathscr{H}(\gamma)$ for
$\gamma\in\Gamma^{}_{i}$, $i=1,2,3$. Since $D^{}_{i}$ is injective
for $i=1,2,3$, the function $D(\cdot)$ is well-defined. We can
verify that there exists a unitary operator $U$ such that
$UD^{}_{0}U^{*}_{}=D|^{}_{\mathscr{H}}$, $UAU^{*}_{}=\tilde{A}$ and
$\tilde{A}(D|^{}_{\mathscr{H}})=D|^{}_{\mathscr{H}}\tilde{A}$. By
(\cite{Shi_2}, Lemma $3.1$), we obtain that $\tilde{A}(\gamma)$ is
strongly irreducible a.e.~$[\nu]$ on $\sigma(D^{}_{0})$. By a
similar argument, we can prove the `if only' part for general cases.

On the other hand, if $A\in M^{}_{n^{}_{}}(L^{\infty}(\mu))$ and
there exists a unitary operator $U$ such that $UAU^{*}$ is a direct
integral of strongly irreducible operators with respect to a
diagonal algebra $\mathscr{D}^{}_{}$ and
$U^{*}_{}\mathscr{D}^{}_{}U\subseteq
M^{}_{n^{}_{}}(L^{\infty}(\mu))$, then it is sufficient to show that
the set of projections $\mathscr{P}$ in $U^{*}_{}\mathscr{D}^{}_{}U$
is a bounded maximal abelian set of idempotents in the relative
commutant $\{A\}^{\prime}_{}\cap M^{}_{n^{}_{}}(L^{\infty}(\mu))$.
If there exists a nontrivial idempotent $P$ in
$\{A\}^{\prime}_{}\cap
M^{}_{n^{}_{}}(L^{\infty}(\mu))\cap\mathscr{P}^{\prime}_{}$ and $P$
not in $\mathscr{P}$, then $UPU^{*}_{}(\gamma)$ is not trivial for
almost every $\gamma$ on $\Gamma$ with respect to $\nu$, where
$\Gamma$ is the index set for $\mathscr{D}$ in the direct integral
form and $\nu$ is a finite positive regular Borel measure applied in
the direct integral form. Let $\Gamma^{}_{P}$ be a subset of
$\Gamma$ with $\nu(\Gamma^{}_{P})>0$ such that $UPU^{*}_{}(\gamma)$
is nontrivial for every $\gamma\in\Gamma^{}_{P}$. Since the equality
$UAU^{*}_{}(\gamma)UPU^{*}_{}(\gamma)=UPU^{*}_{}(\gamma)UAU^{*}_{}(\gamma)$
holds for almost every $\gamma$ in $\Gamma$ with respect to $\nu$.
Then $UAU^{*}_{}(\gamma)$ is not strongly irreducible for almost
every $\gamma$ in $\Gamma^{}_{P}$ with respect to $\nu$. This is a
contradiction. Therefore $\mathscr{P}$ is a bounded maximal abelian
set of idempotents in the relative commutant $\{A\}^{\prime}_{}\cap
M^{}_{n^{}_{}}(L^{\infty}(\mu))$. By Proposition $2.5$ and the
preceding argument, we prove the `if' part.
\end{proof}

Based on Proposition $2.8$, we investigate the local $K$-theory for
the relative commutant $\{A\}^{\prime}_{}\cap
M^{}_{n^{}_{}}(L^{\infty}(\mu))$, where $A$ in
$M^{}_{n^{}_{}}(L^{\infty}(\mu))$ and $\{A\}^{\prime}_{}\cap
M^{}_{n^{}_{}}(L^{\infty}(\mu))$ contains a finite frame. The
following lemma shows the $K^{}_{0}$ group of the relative commutant
of a building block in $(2.30)$.

\begin{lemma}
Let $A$ be an upper triangular operator-valued matrix in
$M^{}_{n^{}_{}}(L^{\infty}(\mu))$ of the form
$$A^{}_{{}}=\begin{pmatrix}
f^{{}}_{}&f^{{}}_{12}&\cdots&f^{{}}_{1n^{}_{}}\\
0&f^{{}}_{}&\cdots&f^{{}}_{2n^{}_{}}\\
\vdots&\vdots&\ddots&\vdots\\
0&0&\cdots&f^{{}}_{}\\
\end{pmatrix}^{}_{n^{}_{}\times n^{}_{}}
\begin{matrix}
L^{2}(\mu)\\
L^{2}(\mu)\\
\vdots\\
L^{2}(\mu)\\
\end{matrix} \eqno{(4.10)}$$
such that $f^{{}}_{i,i+1}$ is supported on $\Lambda$ for
$i=1,2,\ldots,n-1$. Then the $K^{}_{0}$ group of the relative
commutant $\{A\}^{\prime}_{}\cap M^{}_{n^{}_{}}(L^{\infty}(\mu))$ is
of the form
$$K^{}_{0}(\{A\}^{\prime}_{}\cap
M^{}_{n^{}_{}}(L^{\infty}(\mu)))\cong\{\phi:\Lambda\rightarrow\mathbb{Z}|\phi\mbox{
is bounded Borel}\}. \eqno{(4.11)}$$
\end{lemma}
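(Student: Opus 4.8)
The plan is to realise the relative commutant $\mathfrak{B}:=\{A\}^{\prime}\cap M_{n}(L^{\infty}(\mu))$ as a nilpotent extension of the abelian von Neumann algebra $L^{\infty}(\mu)$, to transport the $K_{0}$ computation across the associated quotient map, and to compute $K_{0}(L^{\infty}(\mu))$ directly. For the first point, write $A=f\cdot I_{n}+N$ with $N$ the strictly upper triangular part of $A$; since $f\cdot I_{n}$ is central in $M_{n}(L^{\infty}(\mu))$, in fact $\mathfrak{B}=\{N\}^{\prime}\cap M_{n}(L^{\infty}(\mu))$. The operator $A$ meets the hypotheses of Lemma $3.2$ — constant main diagonal, and $1$-diagonal entries supported on all of $\Lambda$ — so by that lemma every $B\in\mathfrak{B}$ is upper triangular with $B_{11}=B_{22}=\cdots=B_{nn}$. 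Hence $\pi\colon\mathfrak{B}\to L^{\infty}(\mu)$, $\pi(B)=B_{11}$, is a well-defined unital algebra homomorphism; it is split surjective, since $g\mapsto g\cdot I_{n}$ is a unital homomorphism $L^{\infty}(\mu)\to\mathfrak{B}$ with $\pi(g\cdot I_{n})=g$, and its kernel $\mathfrak{J}=\{B\in\mathfrak{B}:B_{11}=0\}$ consists of strictly upper triangular matrices, so that $\mathfrak{J}^{n}=0$.

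The heart of the proof is the observation that the nilpotent ideal $\mathfrak{J}$ does not affect $K_{0}$: idempotents over the quotient $\mathfrak{B}/\mathfrak{J}\cong L^{\infty}(\mu)$ lift to idempotents over $\mathfrak{B}$, uniquely up to conjugation by an element of $1+M_{k}(\mathfrak{J})$, so the induced homomorphism $\pi_{*}\colon K_{0}(\mathfrak{B})\to K_{0}(L^{\infty}(\mu))$ is an isomorphism. (This is a standard fact about $K_{0}$ of a ring modulo a nilpotent ideal; for a unital Banach algebra $K_{0}$ is computed from idempotents in matrix algebras exactly as in the algebraic theory, so it applies here.) Alternatively, one can argue entirely within the framework of the present paper: $M_{k}(\mathfrak{B})$ is naturally isomorphic to $\{A^{(k)}\}^{\prime}\cap M_{kn}(L^{\infty}(\mu))$, where $A^{(k)}=A\oplus\cdots\oplus A$ ($k$ summands), and this again satisfies the hypotheses of Lemma $3.4$; hence every idempotent over $\mathfrak{B}$ is similar in $M_{k}(\mathfrak{B})$ to a diagonal one, which — because the $f_{i,i+1}$ are supported on all of $\Lambda$ — must have the form $\mathrm{diag}(\chi_{T_{1}}I_{n},\dots,\chi_{T_{k}}I_{n})$ for measurable sets $T_{1},\dots,T_{k}$. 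A routine argument reducing the $T_{p}$ to a common finite measurable partition of $\Lambda$ and using additivity of $K_{0}$ over orthogonal sums then shows that $[\mathrm{diag}(\chi_{T_{1}}I_{n},\dots,\chi_{T_{k}}I_{n})]\mapsto\sum_{p=1}^{k}\chi_{T_{p}}$ is a well-defined isomorphism of $K_{0}(\mathfrak{B})$ onto the group described below, compatible with $\pi_{*}$.

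It remains to compute $K_{0}(L^{\infty}(\mu))$. Every projection $P$ in $M_{k}(L^{\infty}(\mu))$ is Murray--von Neumann equivalent to the diagonal projection carrying the same rank function $\lambda\mapsto\mathrm{rank}\,P(\lambda)=\mathrm{Tr}(P)(\lambda)$ of $(2.25)$; two projections over $L^{\infty}(\mu)$ are stably equivalent precisely when their rank functions agree a.e.~$[\mu]$, and conversely every bounded Borel $\mathbb{Z}_{\ge 0}$-valued function on $\Lambda$ arises as such a rank function. Passing to the Grothendieck group, $[P]-[Q]\mapsto\mathrm{Tr}(P)-\mathrm{Tr}(Q)$ identifies $K_{0}(L^{\infty}(\mu))$ with the additive group of bounded Borel functions $\Lambda\to\mathbb{Z}$, functions equal a.e.~$[\mu]$ being identified as elsewhere in the paper. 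Composing with $\pi_{*}^{-1}$ yields $(4.11)$.

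The step I expect to require the most care is the second one: one must be explicit about which $K$-theory is intended and either cite the (standard) invisibility of a nilpotent ideal to $K_{0}$ or, staying self-contained, carry out the Lemma $3.4$ diagonalisation in each amplification $M_{k}(\mathfrak{B})$ and check the resulting additivity computation. The first and third steps are routine given Lemma $3.2$ and the standard description of projections over an abelian von Neumann algebra.
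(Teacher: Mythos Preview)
Your proposal is correct, and in fact your ``alternative'' route---diagonalising idempotents in each amplification $M_{k}(\mathfrak{B})\cong\{A^{(k)}\}'\cap M_{kn}(L^{\infty}(\mu))$ via Lemma~3.4 and reading off the rank function---is essentially what the paper does. The paper proceeds entirely by this explicit diagonalisation: it writes an arbitrary $B\in\{A^{(m)}\}'\cap M_{mn}(L^{\infty}(\mu))$ in block form, conjugates by a permutation unitary to a block upper triangular matrix with constant diagonal block $C_{11}\in M_{m}(L^{\infty}(\mu))$, then (for $B$ idempotent) uses the argument of $(3.30)$--$(3.39)$ together with Lemma~2.3 to conjugate $C$ to a diagonal idempotent whose class is determined by $\mathrm{Tr}(C_{11})$. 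Your primary route via the split surjection $\pi\colon\mathfrak{B}\to L^{\infty}(\mu)$ with nilpotent kernel $\mathfrak{J}$ is cleaner and more conceptual: the invariance of $K_{0}$ under quotients by nilpotent ideals is a one-line citation and immediately reduces the problem to the abelian case, bypassing the explicit matrix manipulations. The paper's approach, by contrast, is entirely self-contained within the tools already developed in Sections~2 and~3, which is presumably why the author chose it; it also makes the semigroup isomorphism $[P]\mapsto\mathrm{Tr}(P)$ visible at the level of representatives rather than only after passing through $\pi_{*}$.
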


\begin{proof}
By Lemma $3.2$, we obtain that for every $B$ in
$\{A\}^{\prime}_{}\cap M^{}_{n^{}_{}}(L^{\infty}(\mu))$, $B$ is of
the upper triangular form
$$B^{}_{{}}=\begin{pmatrix}
\phi^{{}}_{}&\phi^{{}}_{12}&\cdots&\phi^{{}}_{1n^{}_{}}\\
0&\phi^{{}}_{}&\cdots&\phi^{{}}_{2n^{}_{}}\\
\vdots&\vdots&\ddots&\vdots\\
0&0&\cdots&\phi^{{}}_{}\\
\end{pmatrix}^{}_{n^{}_{}\times n^{}_{}}
\begin{matrix}
L^{2}(\mu)\\
L^{2}(\mu)\\
\vdots\\
L^{2}(\mu)\\
\end{matrix}. \eqno{(4.12)}$$
Therefore, every $B$ in $\{A^{(m)}_{}\}^{\prime}_{}\cap
M^{}_{mn^{}_{}}(L^{\infty}(\mu))$ is of the form
$$B=\begin{pmatrix}
B^{}_{11}&B^{}_{12}&\cdots&B^{}_{1m}\\
B^{}_{21}&B^{}_{22}&\cdots&B^{}_{2m}\\
\vdots&\vdots&\ddots&\vdots\\
B^{}_{m1}&B^{}_{m2}&\cdots&B^{}_{mm}\\
\end{pmatrix}, \eqno{(4.13)}$$ where $B^{}_{kl}\in
M^{}_{n^{}_{}}(L^{\infty}(\mu))$ and
$A^{}_{}B^{}_{kl}=B^{}_{kl}A^{}_{}$ for every $k,l$ in
$\{1,\ldots,m\}$. Thus $B^{}_{kl}$ is of the upper triangular form
as in $(4.12)$
$$B^{}_{kl}=\begin{pmatrix}
B^{kl}_{11}&B^{kl}_{12}&\cdots&B^{kl}_{1n}\\
0&B^{kl}_{22}&\cdots&B^{kl}_{2n}\\
\vdots&\vdots&\ddots&\vdots\\
0&0&\cdots&B^{kl}_{nn}\\
\end{pmatrix}^{}_{n\times n}
\begin{matrix}
L^{2}(\mu)\\
L^{2}(\mu)\\
\vdots\\
L^{2}(\mu)\\
\end{matrix}, \eqno{(4.14)}$$ where we treat $B^{kl}_{ij}$ as an
element in $L^{\infty}(\mu)$ and $B^{kl}_{ii}=B^{kl}_{jj}$ for
$k,l=1,\ldots,m$ and $i,j=1,\ldots,n$.

There exists a unitary operator $U$ in
$M^{}_{mn^{}_{}}(L^{\infty}(\mu))$ which is a composition of
finitely many row-switching transformations such that $UBU^{*}(=C)$
as an element in $\{U(A^{(m)}_{})U^{*}\}^{\prime}\cap
M^{}_{mn^{}_{}}(L^{\infty}(\mu))$ is of the upper triangular form
$$C=\begin{pmatrix}
C^{}_{11}&C^{}_{12}&\cdots&C^{}_{1n}\\
0&C^{}_{22}&\cdots&C^{}_{2n}\\
\vdots&\vdots&\ddots&\vdots\\
0&0&\cdots&C^{}_{nn}\\
\end{pmatrix}, \eqno{(4.15)}$$ where for $i,j=1,\ldots,n$, every
$C^{}_{ij}$ is of the form
$$C^{}_{ij}=\begin{pmatrix}
B^{11}_{ij}&B^{12}_{ij}&\cdots&B^{1m}_{ij}\\
B^{21}_{ij}&B^{22}_{ij}&\cdots&B^{2m}_{ij}\\
\vdots&\vdots&\ddots&\vdots\\
B^{m1}_{ij}&B^{m2}_{ij}&\cdots&B^{mm}_{ij}\\
\end{pmatrix}^{}_{m\times m}
\begin{matrix}
L^{2}(\mu)\\
L^{2}(\mu)\\
\vdots\\
L^{2}(\mu)\\
\end{matrix}. \eqno{(4.16)}$$ The equality $C^{}_{ii}=C^{}_{jj}$
holds for $i,j=1,\ldots,n$ and $C^{}_{11}$ can be every element in
$M^{}_{m^{}_{}}(L^{\infty}(\mu))$. This fact is different from the
counterpart mentioned from $(3.30)$ to $(3.32)$.

If $B$ is an idempotent, then so is $C$. By a similar proof from
$(3.30)$ to $(3.39)$, we obtain an invertible element $X$ in
$\{U(A^{(m)}_{})U^{*}\}^{\prime}\cap
M^{}_{mn^{}_{}}(L^{\infty}(\mu))$ such that $XCX^{-1}_{}$ is of the
diagonal form
$$XCX^{-1}_{}=\begin{pmatrix}
C^{}_{11}&0&\cdots&0\\
0&C^{}_{22}&\cdots&0\\
\vdots&\vdots&\ddots&\vdots\\
0&0&\cdots&C^{}_{nn}\\
\end{pmatrix} \eqno{(4.17)}$$ where $C^{}_{ii}$s are the same
idempotent for $i=1,\ldots,n$. Furthermore, we assume that
$$\mathrm{Tr}(C^{}_{ii})(\lambda)=r \eqno{(4.18)}$$ for an integer
$1\leq r\leq m$ a.e.~$[\mu]$ on $\Lambda$. Then by Lemma $2.3$,
there exists an invertible element $Y$ in
$M^{}_{m^{}_{}}(L^{\infty}(\mu))$ such that
$$YC^{}_{ii}Y^{-1}_{}=I^{}_{M^{}_{r^{}_{}}(L^{\infty}(\mu))}\oplus 0.
\eqno{(4.19)}$$ Note that $Y^{(n)}_{}$ is an invertible element in
$\{U(A^{(m)}_{})U^{*}\}^{\prime}\cap
M^{}_{mn^{}_{}}(L^{\infty}(\mu))$. In this sense, we form an abelian
semigroup by equivalence classes of idempotents in
$\cup^{\infty}_{m=1}\{A^{(m)}_{}\}^{\prime}\cap
M^{}_{mn^{}_{}}(L^{\infty}(\mu))$ up to similarity. Thus this
semigroup is isomorphic to the abelian semigroup
$$\{\phi:\Lambda\rightarrow\mathbb{N}|\phi\mbox{
is bounded Borel}\} \eqno{(4.20)}$$ by an isomorphism induced by the
function $\mathrm{Tr}(\cdot)$ defined in $(2.25)$. Then by a routine
computation, we obtain the $K^{}_{0}$ group of
$\{A^{}_{}\}^{\prime}\cap M^{}_{n^{}_{}}(L^{\infty}(\mu))$ is of the
form formulated as in $(4.11)$.
\end{proof}

For a general case that $A$ in $M^{}_{n^{}_{}}(L^{\infty}(\mu))$ and
$\{A\}^{\prime}_{}\cap M^{}_{n^{}_{}}(L^{\infty}(\mu))$ contains a
finite frame, the $K^{}_{0}$ group of $\{A\}^{\prime}_{}\cap
M^{}_{n^{}_{}}(L^{\infty}(\mu))$ may not be of a neat form as in
$(4.11)$. We see this from the following example.

\begin{example}
Let $A$ in $M^{}_{4}(L^{\infty}(\mu))$ be of the form
$$A=\begin{pmatrix}
A^{}_{1}&{0}\\
{0}&A^{}_{2}\\
\end{pmatrix},\mbox{ and }
A^{}_{1}=\begin{pmatrix}
f&{f^{}_{1}}\\
{0}&f\\
\end{pmatrix}
\begin{matrix}
L^{2}(\mu)\\
L^{2}(\mu)\\
\end{matrix},\,
A^{}_{2}=\begin{pmatrix}
f&{f^{}_{2}}\\
{0}&f\\
\end{pmatrix}
\begin{matrix}
L^{2}(\mu)\\
L^{2}(\mu)\\
\end{matrix}, \eqno{(4.21)}$$ where $f^{}_{1}(\lambda)=\lambda$ and
$f^{}_{2}(\lambda)=1$ for every $\lambda$ in $\Lambda$ and we assume
that $\Lambda=[0,1]$. Note that $0$ is in $\sigma(f^{}_{1})$. For
every $\epsilon>0$, the relation
$f^{}_{1}(\{\lambda:|\lambda|<\epsilon\})\subseteq
\{\lambda:|f^{}_{1}(\lambda)|<\epsilon\}$ holds for the open ball
$\{\lambda:|\lambda|<\epsilon\}$ with
$\mu\{\lambda:|\lambda|<\epsilon\}>0$. Then the point $0$ in the
unit interval $[0,1]$ makes the $K^{}_{0}$ group of
$\{A\}^{\prime}_{}\cap M^{}_{4^{}_{}}(L^{\infty}(\mu))$ different
from the form as in $(4.11)$. We define an equivalent relation
`$\sim$' in the group
$$\mathscr{G}=\{\phi:\Lambda\rightarrow\mathbb{Z}^{(2)}_{}|\phi\mbox{
is bounded Borel}\}. \eqno{(4.22)}$$ That is for
$\phi=(\phi^{}_{1},\phi^{}_{2})$ and
$\psi=(\psi^{}_{1},\psi^{}_{2})$ in $\mathscr{G}$, $\phi$ is said to
be equivalent to $\psi$ (denoted by $\phi\sim\psi$) if there exists
an $\epsilon>0$ such that:
\begin{enumerate}
\item $\phi(\lambda)=\psi(\lambda)$ for $|\lambda|<\epsilon$;
\item
$\phi^{}_{1}(\lambda)+\phi^{}_{2}(\lambda)
=\psi^{}_{1}(\lambda)+\psi^{}_{2}(\lambda)$ for
$|\lambda|\geq\epsilon$.
\end{enumerate}
Thus by a routine computation, we obtain that
$$K^{}_{0}(\{A\}^{\prime}_{}\cap M^{}_{4^{}_{}}(L^{\infty}(\mu)))
\cong\mathscr{G}\slash\sim.  \eqno{(4.23)}$$ For a general case, we
assume that the support of $f^{}_{i}$ is $\Lambda$ and
$0\in\sigma(f^{}_{i})$ for $i=1,2$. Let
$\{p^{i}_{k}\}^{}_{i=1,2;k\in\mathbb{N}}$ be the set of points in
$\Lambda$ such that:
\begin{enumerate}
\item $f^{}_{i}(p^{i}_{k})=0$ for $i=1,2$ and every $k$ in
$\mathbb{N}$;
\item For every $p^{i}_{k}$ and $\epsilon>0$, there exists a
$\delta>0$ such that $|f^{}_{i}(p)|<\epsilon$ for
$|p-p^{i}_{k}|<\delta$.
\end{enumerate}
Essentially, the equivalent relation mentioned preceding $(4.23)$ is
determined by the set of $\{p^{i}_{k}\}^{}_{k\in\mathbb{N}}$ in
$\Lambda$ chosen with respect to the $1$-diagonal entries of
$A^{}_{1}$ and $A^{}_{2}$.
\end{example}

For upper triangular forms of different sizes, we obtain the
following lemma.

\begin{lemma}
Let $A=A^{}_{1}\oplus A^{}_{2}$ and $A^{}_{i}$ in
$M^{}_{n^{}_{i}}(L^{\infty}(\mu))$ is of the form
$$A^{}_{i}=\begin{pmatrix}
A^{i}_{11}&A^{i}_{12}&\cdots&A^{i}_{1n^{}_{i}}\\
0&A^{i}_{22}&\cdots&A^{i}_{2n^{}_{i}}\\
\vdots&\vdots&\ddots&\vdots\\
0&0&\cdots&A^{i}_{n^{}_{i}n^{}_{i}}\\
\end{pmatrix}^{}_{n^{}_{i}\times n^{}_{i}}
\begin{matrix}
L^{2}(\mu)\\
L^{2}(\mu)\\
\vdots\\
L^{2}(\mu)\\
\end{matrix}, \eqno{(4.24)}$$ such that
\begin{enumerate}
\item $n^{}_{1}>n^{}_{2}$ and $n=n^{}_{1}+n^{}_{2}$;
\item $A^{i}_{kk}=A^{i}_{ll}$ for $i=1,2$ and $k,l=1,\ldots,n^{}_{i}$;
\item the support of $A^{i}_{k,k+1}$ is $\Lambda$ for $i=1,2$ and
$k=1,\ldots,n^{}_{i}-1$.
\end{enumerate}
Then the $K^{}_{0}$ group of the relative commutant
$\{A\}^{\prime}_{}\cap M^{}_{n^{}_{}}(L^{\infty}(\mu))$ is of the
form
$$K^{}_{0}(\{A\}^{\prime}_{}\cap
M^{}_{n^{}_{}}(L^{\infty}(\mu)))\cong
\{\phi:\Lambda\rightarrow\mathbb{Z}^{(2)}_{}|\phi\mbox{ is bounded
Borel}\}. \eqno{(4.25)}$$
\end{lemma}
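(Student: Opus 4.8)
The plan is to run the argument of Lemma $4.4$ in the two-block situation, the decisive point being that the hypothesis $n^{}_{1}>n^{}_{2}$ forces the coupling between the two blocks to be nilpotent, so that it contributes nothing to $K^{}_{0}$. First I would describe the amplified relative commutant $\{A^{(m)}\}^{\prime}\cap M^{}_{mn}(L^{\infty}(\mu))$, where $A^{(m)}$ is the $m$-fold amplification of $A$ and, after a permutation of coordinates, $A^{(m)}=A^{(m)}_{1}\oplus A^{(m)}_{2}$ with $A^{(m)}_{i}$ the $m$-fold amplification of $A^{}_{i}$. An element splits as a $2\times2$ block matrix $(B^{}_{kl})$ with $A^{(m)}_{k}B^{}_{kl}=B^{}_{kl}A^{(m)}_{l}$; by Lemma $3.2$ the diagonal corners $B^{}_{11},B^{}_{22}$ lie in $\{A^{(m)}_{1}\}^{\prime}\cap M^{}_{mn^{}_{1}}(L^{\infty}(\mu))$ and $\{A^{(m)}_{2}\}^{\prime}\cap M^{}_{mn^{}_{2}}(L^{\infty}(\mu))$, both upper triangular with constant main diagonal and both having the $K^{}_{0}$ group of $(4.11)$ by Lemma $4.4$. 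By Lemma $3.3$, and here is where $n^{}_{1}>n^{}_{2}$ enters, the corner $B^{}_{21}$ from the larger block to the smaller one carries no level-preserving part, whence $B^{}_{12}B^{}_{21}$ is strictly upper triangular in $M^{}_{mn^{}_{1}}(L^{\infty}(\mu))$ and $B^{}_{21}B^{}_{12}$ is strictly upper triangular in $M^{}_{mn^{}_{2}}(L^{\infty}(\mu))$; in particular both are nilpotent. (If $A^{1}_{11}(\lambda)\neq A^{2}_{11}(\lambda)$ on a set of positive measure, Lemma $3.8$ annihilates the coupling there, and by splitting $\Lambda$ as in the paragraph preceding Theorem $3.9$ one reduces to the case $A^{1}_{11}=A^{2}_{11}$ a.e.)

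Second, I would show that every idempotent $P$ in $\{A^{(m)}\}^{\prime}\cap M^{}_{mn}(L^{\infty}(\mu))$ is similar, inside this relative commutant, to a block-diagonal idempotent $\mathrm{diag}(P^{}_{1},P^{}_{2})$ with $P^{}_{i}$ an idempotent in $\{A^{(m)}_{i}\}^{\prime}\cap M^{}_{mn^{}_{i}}(L^{\infty}(\mu))$. This reuses the mechanism of Example $3.1$ and Lemma $3.6$: a row-switching unitary $U$ interleaving the two blocks by diagonal level carries $Q=UPU^{*}$ to block upper triangular form; writing $R$ for its strictly block upper triangular part, the structural description together with the nilpotency gives $\sigma^{}_{\mathfrak{A}}(SR)=\sigma^{}_{\mathfrak{A}}(RS)=\{0\}$ for every $S$ in $\mathfrak{A}=\{UA^{(m)}U^{*}\}^{\prime}\cap M^{}_{mn}(L^{\infty}(\mu))$, and then the identities $(2Q-I)(2Q-I-R)=I-(2Q-I)R$ and $(Q-R)(2Q-I-R)=(2Q-I-R)Q$ exhibit $Q\sim Q-R$; the algorithm of Lemma $4.4$ diagonalizes each block-diagonal corner further.

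Third, I would identify the abelian semigroup of similarity classes of idempotents in $\bigcup^{}_{m}\{A^{(m)}\}^{\prime}\cap M^{}_{mn}(L^{\infty}(\mu))$ with the bounded Borel functions $\Lambda\to\mathbb{N}^{(2)}$ via $P\mapsto(\mathrm{Tr}(P^{}_{1}),\mathrm{Tr}(P^{}_{2}))$ and pass to the Grothendieck group. The well-definedness and injectivity of this correspondence — equivalently, the absence of the quotient relation occurring in Example $4.5$ — follow once one checks that the two-sided ideal $\mathfrak{N}$ of $\{A^{(m)}\}^{\prime}\cap M^{}_{mn}(L^{\infty}(\mu))$ whose diagonal corners are the strictly-upper-triangular parts of $\{A^{(m)}_{1}\}^{\prime}$ and $\{A^{(m)}_{2}\}^{\prime}$ and whose off-diagonal corners are the full coupling spaces is a nilpotent ideal: after the reshuffling $U$, the larger-to-smaller coupling has no level-preserving component, so $\mathfrak{N}$ is strictly upper triangular with respect to a flag of length $n^{}_{1}+n^{}_{2}$. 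This is precisely the point at which $n^{}_{1}>n^{}_{2}$ is indispensable; when $n^{}_{1}=n^{}_{2}$, as in Example $4.5$, both couplings carry level-preserving parts, $\mathfrak{N}$ is no longer nilpotent, and one is left with the quotient $\mathscr{G}/\!\sim$. Since $(\{A^{(m)}\}^{\prime}\cap M^{}_{mn}(L^{\infty}(\mu)))/\mathfrak{N}\cong M^{}_{m}(L^{\infty}(\mu))\oplus M^{}_{m}(L^{\infty}(\mu))$, the same idempotent-lifting computation as in Lemma $4.4$ gives $K^{}_{0}(\{A\}^{\prime}\cap M^{}_{n}(L^{\infty}(\mu)))\cong K^{}_{0}(L^{\infty}(\mu))\oplus K^{}_{0}(L^{\infty}(\mu))$, which is the group displayed in $(4.25)$.

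The step I expect to be the main obstacle is the verification, in the first and third paragraphs, that $n^{}_{1}>n^{}_{2}$ genuinely makes the coupling ideal $\mathfrak{N}$ nilpotent — equivalently, that the two summands cannot trade rank under a similarity in the relative commutant — together with the bookkeeping needed to run the idempotent-lifting argument uniformly in the amplification parameter $m$; the remainder is a routine adaptation of Lemma $4.4$ and Lemma $3.6$.
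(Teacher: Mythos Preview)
Your proposal is correct and follows essentially the same route the paper indicates: the paper's own proof is the single sentence ``a routine computation by applying Lemma $3.2$, Lemma $3.3$, Lemma $3.4$, Lemma $3.6$ and Lemma $3.8$,'' and you invoke precisely these ingredients (with Lemma $4.4$ standing in for Lemma $3.4$, which it parallels). Your organization via the nilpotent two-sided ideal $\mathfrak{N}$ and the quotient $M_m(L^{\infty}(\mu))\oplus M_m(L^{\infty}(\mu))$ is a clean way to package the reason the $n_1>n_2$ hypothesis eliminates the Example $4.5$ phenomenon, but it is a reformulation rather than a different argument: after the reshuffling $U$ of Lemma $3.6$, the commutant is genuinely entrywise upper triangular (as in $(3.42)$), so the strictly upper triangular part is visibly a nilpotent ideal and the diagonal map to $L^{\infty}(\mu)^{(2)}$ is exactly the ``routine computation'' the paper has in mind.
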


The proof of this lemma is a routine computation by applying Lemma
$3.2$, Lemma $3.3$, Lemma $3.4$, Lemma $3.6$ and Lemma $3.8$. By
Proposition $2.8$, to tell when two operators in
$M^{}_{n^{}_{}}(L^{\infty}(\mu))$ are similar to each other in
$M^{}_{n^{}_{}}(L^{\infty}(\mu))$, it is sufficient to tell when two
building blocks as in $(2.31)$ are similar to each other. For two
building blocks in $M^{}_{n^{}_{}}(L^{\infty}(\mu))$, we present a
method to distinguish them by the $K$-theory of the relative
commutant of the direct sum of this two building blocks.

\begin{proposition}
Let $A=A^{}_{1}\oplus A^{}_{2}$ and $A^{}_{i}$ in
$M^{}_{n^{}_{}}(L^{\infty}(\mu))$ is of the form
$$A^{}_{i}=\begin{pmatrix}
A^{i}_{11}&A^{i}_{12}&\cdots&A^{i}_{1n^{}_{}}\\
0&A^{i}_{22}&\cdots&A^{i}_{2n^{}_{}}\\
\vdots&\vdots&\ddots&\vdots\\
0&0&\cdots&A^{i}_{n^{}_{}n^{}_{}}\\
\end{pmatrix}^{}_{n^{}_{}\times n^{}_{}}
\begin{matrix}
L^{2}(\mu)\\
L^{2}(\mu)\\
\vdots\\
L^{2}(\mu)\\
\end{matrix}, \eqno{(4.26)}$$ such that
\begin{enumerate}
\item $A^{i}_{kk}=A^{i}_{ll}$ for $i=1,2$ and $k,l=1,\ldots,n^{}_{}$;
\item the support of $A^{i}_{k,k+1}$ is $\Lambda$ for $i=1,2$ and
$k=1,\ldots,n^{}_{}-1$.
\end{enumerate}
If the $K^{}_{0}$ group of the relative commutant
$\{A\}^{\prime}_{}\cap M^{}_{2n^{}_{}}(L^{\infty}(\mu))$ is of the
form
$$K^{}_{0}(\{A\}^{\prime}_{}\cap
M^{}_{2n^{}_{}}(L^{\infty}(\mu)))\cong
\{\phi:\Lambda\rightarrow\mathbb{Z}^{}_{}|\phi\mbox{ is bounded
Borel}\}, \eqno{(4.27)}$$ then $A^{}_{1}$ and $A^{}_{2}$ are similar
to each other in $M^{}_{n^{}_{}}(L^{\infty}(\mu))$.
\end{proposition}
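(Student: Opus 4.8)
The plan is to argue by contraposition via the invariant computed in Lemma~$4.6$. Suppose $A^{}_{1}$ and $A^{}_{2}$ are \emph{not} similar to each other in $M^{}_{n^{}_{}}(L^{\infty}(\mu))$. Since both have constant trace-type data (condition~$(1)$ forces the diagonal entries to agree, and both are upper triangular with supported super-diagonals), by Lemma~$3.8$ the obstruction to similarity is localized: there must be a Borel subset $\Lambda^{}_{0}$ of $\Lambda$ with $\mu(\Lambda^{}_{0})>0$ on which $A^{}_{1}$ and $A^{}_{2}$ restricted to $L^{\infty}(\Lambda^{}_{0},\mu)$ are not similar. First I would use Lemma~$3.3$ (applied with $m=n$, so $C^{}_{ij}=0$ for $i>j$ and likewise $D^{}_{ij}=0$ for $i>j$) together with Lemma~$3.2$ to pin down the structure of the relative commutant: every element $B$ of $\{A\}^{\prime}\cap M^{}_{2n^{}_{}}(L^{\infty}(\mu))$ has $2\times 2$ block form $(B^{}_{kl})^{}_{k,l=1,2}$ with each $B^{}_{kl}$ upper triangular and each $B^{}_{kl}$ having a scalar-type main diagonal $B^{kl}_{jj}=B^{kl}_{11}$, exactly as in the proof of Lemma~$4.4$.

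Next I would run the idempotent-diagonalization machinery of Lemma~$3.4$ and Lemma~$4.4$: after the row-switching unitary $U$ that groups the super-diagonal levels, any idempotent in the relative commutant is similar (inside the relative commutant) to one of the block-diagonal form $\mathrm{diag}(C^{}_{11},\ldots,C^{}_{11})$ where $C^{}_{11}$ is an idempotent in $M^{}_{2}(L^{\infty}(\mu))$ whose entries satisfy the intertwining relations forced by $A^{}_{1}$ and $A^{}_{2}$ on their $1$-diagonals. So the semigroup of equivalence classes of idempotents over the stabilization $\cup^{}_{m}\{A^{(m)}\}^{\prime}\cap M^{}_{2mn}(L^{\infty}(\mu))$ is carried by $2\times 2$ matrix-valued idempotents over $L^{\infty}(\mu)$ subject to those constraints. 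The key point is then: on the set $\Lambda^{}_{0}$ where $A^{}_{1}$ and $A^{}_{2}$ fail to be similar, the off-diagonal blocks $B^{}_{12},B^{}_{21}$ of such an idempotent cannot be made to ``mix'' the two summands freely — there is a genuine rank-splitting obstruction — so the induced equivalence on the rank function $\mathrm{Tr}(\cdot)$ (as in $(2.25)$) does \emph{not} collapse the two-component data to a single $\mathbb{Z}$-valued function over $\Lambda^{}_{0}$. Concretely, I would produce an idempotent supported on $\Lambda^{}_{0}$ whose class is not equivalent to any ``diagonal-summed'' class, exactly the phenomenon isolated in Example~$4.5$ and Lemma~$4.6$, so that $K^{}_{0}(\{A\}^{\prime}\cap M^{}_{2n}(L^{\infty}(\mu)))$ strictly contains a copy of bounded Borel functions into $\mathbb{Z}^{(2)}$ modulo a nontrivial equivalence, hence is \emph{not} isomorphic to $\{\phi:\Lambda\to\mathbb{Z}\mid \phi \text{ bounded Borel}\}$. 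This contradicts $(4.27)$.

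Therefore $(4.27)$ forces no such $\Lambda^{}_{0}$ to exist, i.e.\ $A^{}_{1}$ and $A^{}_{2}$ are locally similar a.e.\ $[\mu]$; patching the local similarities over a Borel partition of $\Lambda$ into pieces where the similarity is implemented measurably (using that $A^{}_{1},A^{}_{2}$ are in $M^{}_{n}(L^{\infty}(\mu))$ and the similarity can be chosen from the relative commutant as in Proposition~$3.5$ and Proposition~$3.7$) yields a single invertible $X$ in $M^{}_{n^{}_{}}(L^{\infty}(\mu))$ with $XA^{}_{1}X^{-1}=A^{}_{2}$.

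The main obstacle I anticipate is the precise bookkeeping in the contrapositive step: showing that ``$A^{}_{1}\not\sim A^{}_{2}$ on $\Lambda^{}_{0}$'' genuinely produces an idempotent class that the relation defining $K^{}_{0}$ cannot identify with a one-component class. This is where one must understand exactly which intertwiners between $A^{}_{1}$ and $A^{}_{2}$ (the blocks $B^{}_{12},B^{}_{21}$) exist — a computation in the spirit of $(2.61)$–$(2.62)$ and Lemma~$3.8$ — and argue that invertibility of such an intertwiner on a positive-measure set is equivalent to local similarity there. Subordinate to this is the measurable-selection issue in the final patching: ensuring the locally-defined similarity operators can be glued into a single bounded invertible element of $M^{}_{n^{}_{}}(L^{\infty}(\mu))$, which should follow from the explicit algorithmic nature of the constructions in Lemma~$2.3$ and Lemma~$3.4$ together with a countable exhaustion of $\Lambda$ by the relevant Borel sets.
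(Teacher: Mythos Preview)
Your proposal is correct and follows essentially the same approach as the paper: the paper's proof of this proposition is the single sentence ``By the methods we applied in Example~$4.5$ and other lemmas in this section, we can prove this proposition,'' and your contrapositive argument---showing that failure of similarity on a positive-measure set forces the $K_{0}$ group to acquire the $\mathbb{Z}^{(2)}$-type structure of Example~$4.5$ and Lemma~$4.6$ rather than the single-$\mathbb{Z}$ form of $(4.27)$---is exactly the route the paper has in mind. Your write-up in fact supplies considerably more detail than the paper does, including the explicit invocation of Lemmas~$3.2$--$3.4$ for the commutant structure and the measurable-patching issue at the end.
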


By the methods we applied in Example $4.5$ and other lemmas in this
section, we can prove this proposition.

\bibliographystyle{amsplain}

\end{document}